\numberwithin{equation}{section}
\newtheorem{theorem}{Theorem}[section]
\newtheorem{example}[theorem]{Example}
\newtheorem{lemma}[theorem]{Lemma}
\newtheorem{proposition}[theorem]{Proposition}
\theoremstyle{remark}
\newtheorem{remark}[theorem]{Remark}
\newcommand{\1}{\mathds{1}}
\newcommand{\R}{\mathds{R}}
\newcommand{\N}{\mathds{N}}
\newcommand{\converges}[1]{ \overset{#1}{\longrightarrow}} 
\newcommand{\bnu}{\boldsymbol\nu}
\DeclareMathOperator*{\argmin}{arg\,min}
\newcommand{\M}{\mathcal{M}}
\newcommand{\x}{\mathbf{x}}
\newcommand{\y}{\mathbf{y}}
\newcommand{\veps}{\varepsilon}
\DeclareMathOperator{\Prob}{\mathbb{P}}
\DeclareMathOperator{\sign}{sign}
\DeclareMathOperator{\divergence}{div}
\newcommand{\blue}{\color{blue}}
\definecolor{mygreen}{rgb}{0.1,0.75,0.2}
\newcommand{\nc}{\normalcolor}
\title{A new analytical approach to consistency and overfitting in regularized empirical risk minimization}
\author{Nicol\'as Garc\'ia Trillos \and Ryan Murray} 
\newcommand{\Addresses}{{
  \bigskip
  \footnotesize
  \textsc{Division of Applied Mathematics, Brown University,
    Providence, RI, 02912, USA. }\par\nopagebreak  \textit{Email:} \texttt{nicolas\_garcia\_trillos@brown.edu}
    
  \medskip
 \textsc{Mathematics Department, The Pennsylvania State University, University Park, PA 16802, USA.}\par\nopagebreak
  \textit{Email:} \texttt{rwm22@psu.edu}
}}
\begin{document}

\keywords{overfitting, underfitting, consistency, risk minimization, regularized empirical risk minimization, graph total variation, point cloud, discrete to continuum limit, classification, Bayes classifier, Young measures, concentration inequalities}
\subjclass{49J55, 49J45, 60D05, 68R10, 62G20}

\newcounter{broj1}
\date{\today}
\begin{abstract}
This work considers the problem of binary classification: given training data $\x_1, \dots, \x_n$ from a certain population, together  with associated labels $\y_1,\dots, \y_n \in \left\{0,1 \right\}$, determine the best label for an element $\x$ not among the training data. More specifically, this work considers a variant of the regularized empirical risk functional which is defined intrinsically to the observed data and does not depend on the underlying population. Tools from modern analysis are used to obtain a concise proof of  asymptotic consistency as regularization parameters are taken to zero at rates related to the size of the sample. These analytical tools give a new framework for understanding overfitting and underfitting, and rigorously connect the notion of overfitting with a loss of compactness. 
\end{abstract}
\maketitle

\section{Introduction}

The problem of classification is one of the most important problems in machine learning and statistics. In this paper we consider the problem of binary classification: given training data $\x_1, \dots, \x_n$ from a population, together  
with associated labels $\y_1,\dots, \y_n \in \left\{0,1 \right\}$, determine the best label for an element $\x$ not among the training data. The $\x$ variables represent the values of certain features identifying individuals/objects in a given population; on the other hand, the $\y$ variables represent a group each individual belongs to. 
The classification problem is thus to construct, using the available training data $(\x_i,\y_i)_{i=1\dots n}$, a function, called a classifier, mapping features $\x$ to labels $u(\x)$, which reflects patterns or trends exhibited in the samples. In some sense, the goal can be posed as ``learning'' relevant aspects of the underlying geometry of the population by observing only a finite number of samples.

Here we follow the standard assumption that the data $\left\{ (\x_i,\y_i) \right\}_{i}$ are independent samples of some unknown ground-truth distribution $\bnu$. This means that $\y_i$ is not simply obtained by evaluating a function at $\x_i$, but instead $\y_i$ is randomly chosen from a distribution that depends on $\x_i$. In other words, the labels in the training data are randomly obtained from a distribution that depends on the feature values:
\[ \y_i \sim \Prob(\y_i =\cdot | \x= \x_i  ) .\]
For our purposes, this assumption gives a robust means to account for external sources of noise and for internal uncertainty associated to an object/individual (for example, the features may not always give all of the relevant information about an individual). It is also reasonable to assume that objects with similar features have similar labels, which in this probabilistic setting means that the distribution $\Prob(\y = \cdot \:  | \x=x )$ varies continuously in $x$.

By way of definition, a classifier is a function $u: D \rightarrow \{0,1 \}$, where we use $D$ to denote the space of features for the given population. The performance, or ``goodness'' of any classifier is measured in terms of some risk functional. The risk functional that we consider in this paper is the average misclassifications error for data sampled from the distribution $\bnu$. More precisely, given a classifier $u : D \rightarrow \{ 0,1\}$, we define its \textit{risk} as
\[  R(u):= \mathbb{E}( \lvert u(\x) - \y  \rvert)= \int_{D \times \{0,1 \}}|u(x) - y| d \bnu(x,y).  \]
With respect to this risk functional, the best classifier (i.e. the one that minimizes the risk) is the \textit{Bayes classifier}, which is the function  $u_B$ defined as
\[ u_B(x) :=  \begin{cases}1 & \text{ if } \Prob(\y=1 | \x= x  ) > 1/2{\blue ,} \\  0  & \text{ otherwise}. \end{cases} \]
A central difficulty in the classification problem is that $\bnu$ is unknown, and thus we can not compute either $R(u)$ or $u_B$. In fact, in some cases the extent of $D$, or in other words the support of $\bnu$, may be unknown. Given that the Bayes classifier is the best classifier, a reasonable goal is then to construct a classifier based completely on the training data, in such a way that it approximates the Bayes classifier in some asymptotic sense (as $n \rightarrow \infty$). 
A result of this type, namely that a family of classifiers approximates the Bayes classifier as $n \to \infty$, is known as an \emph{asymptotic consistency result}.

One of the key difficulties in briding the gap between the finite training sample and the unknown distribution $\bnu$ is balancing between \textit{overfitting} and \textit{underfitting}. When one constructs a very ``complex'' classifier so as to be faithful to the labels associated to the training data, it is said that the classifier overfits the data.  On the other hand, when one oversimplifies the classifier by sacrificing faithfulness to the observed data, it is said that the classifier underfits the data. The so called $1$-NN (one nearest neighbor) classifier is a typical example of a classifier that overfits: for a given $x\in D$ define the label of $x$ to be that of the point $\x_i$ closest to $x$. On the other hand, the classifier constructed by setting the label of every $x \in D$ to be the most common label among the training data, is the most extreme case of a classifier that underfits. Figure \ref{fig:over-under-example} shows examples of these situations. The natural question is thus: How does one construct an ``ideal'' classifier which neither overfits nor underfits a finite set of training data?
 \begin{figure}
     \centering
     \begin{subfigure}[b]{0.3\textwidth}
         \frame{\includegraphics[width=\textwidth]{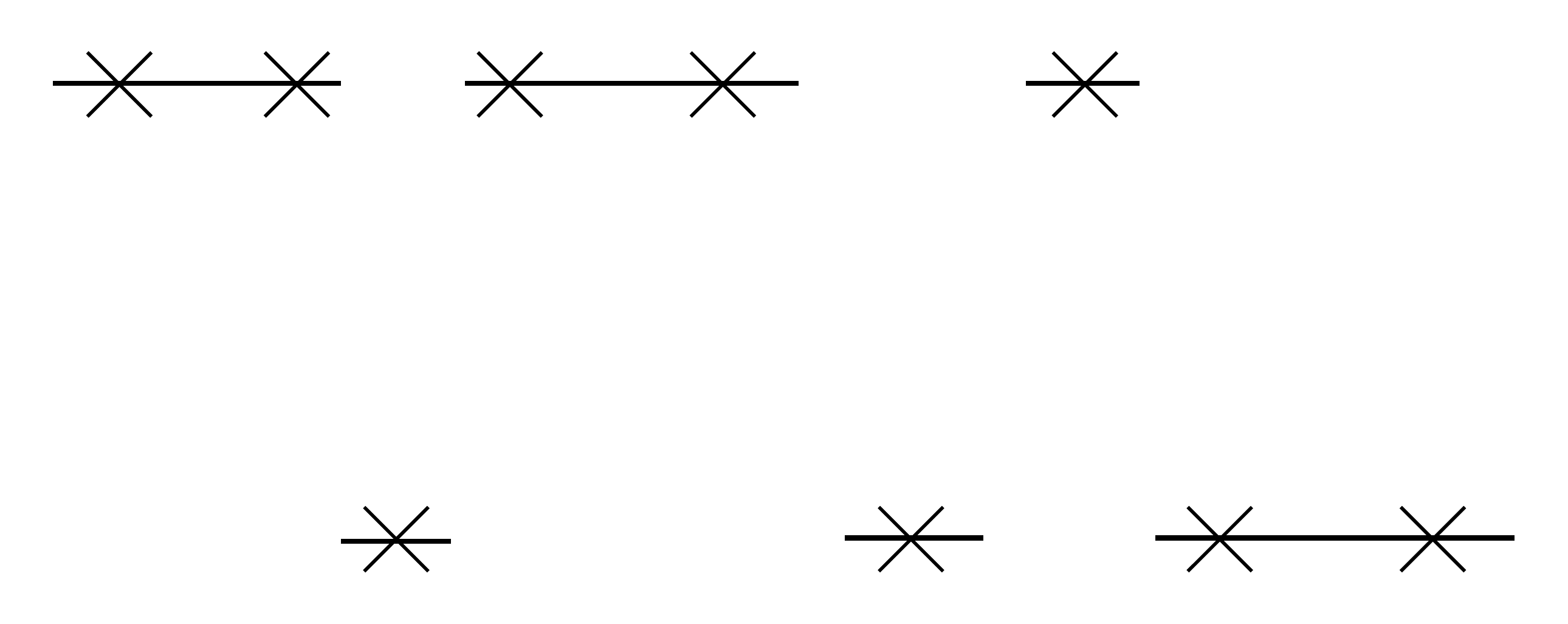}}
         \label{fig:overfit}
     \end{subfigure}
     ~ 
     \begin{subfigure}[b]{0.3\textwidth}
         \frame{\includegraphics[width=\textwidth]{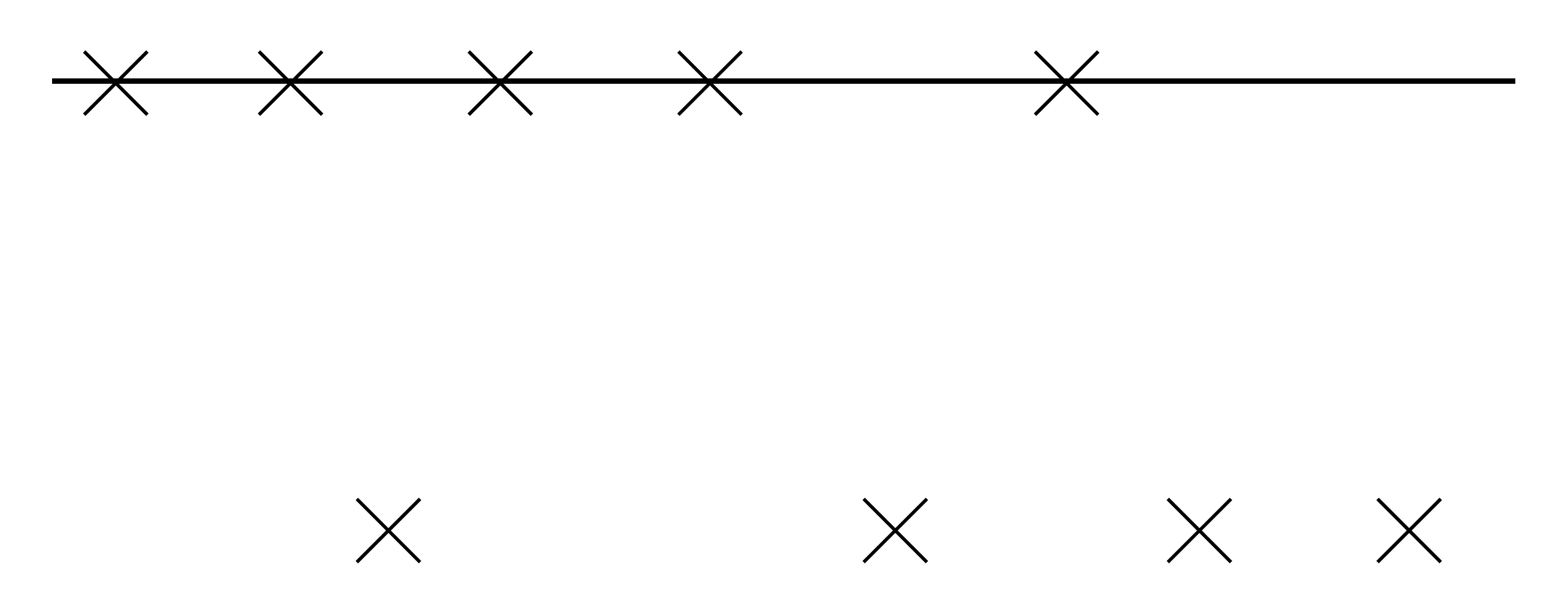}}
         \label{fig:underfit}
     \end{subfigure}
     ~ 
     \begin{subfigure}[b]{0.3\textwidth}
         \frame{\includegraphics[width=\textwidth]{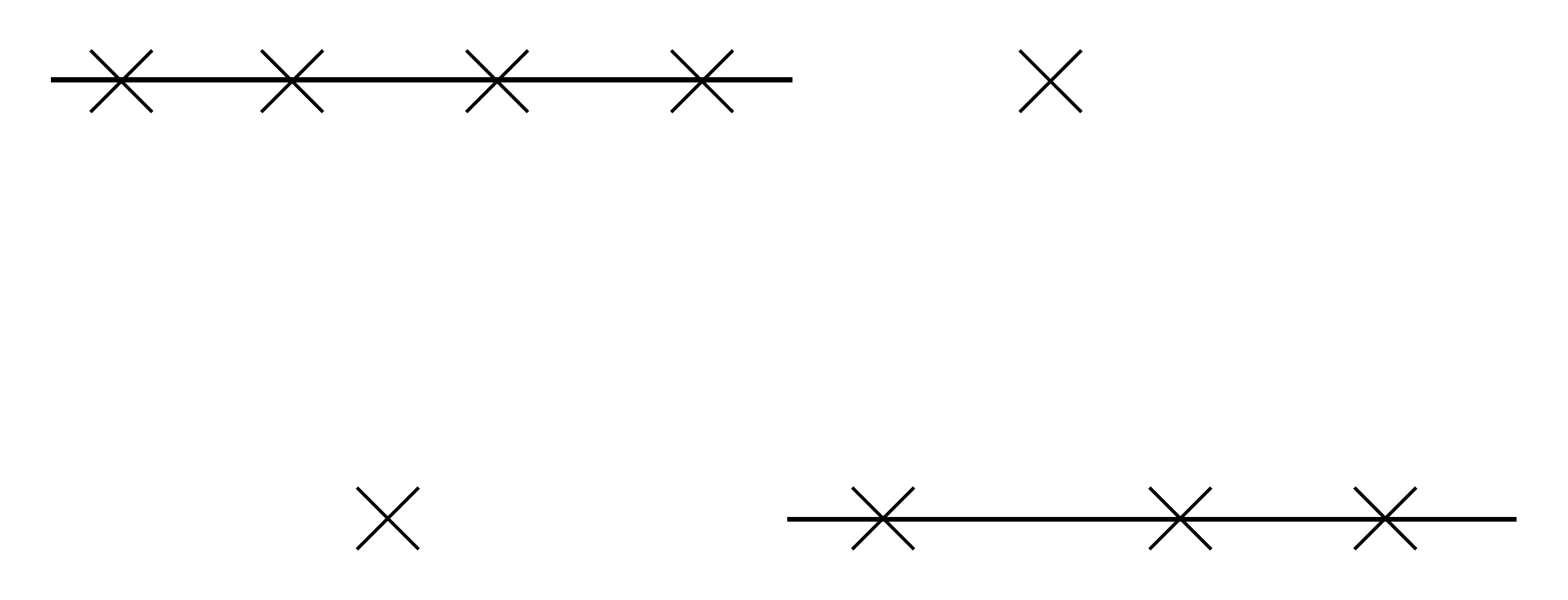}}
         \label{fig:bayes}
     \end{subfigure}
     \caption{Three different classifiers for a family of data points; the $x$-axis represents location and  the $y$-axis represents the labels $0$ or $1$. The first classifier, namely the nearest neighbor classifier $u_\1^n$, overfits the data. The second classifier picks the most common label, and underfits the data. The third classifier is the Bayes classifier.}\label{fig:over-under-example}
 \end{figure}

To answer the previous question one needs a clear mathematical notion of overfitting and underfitting. One central purpose of this paper is to give precise definitions for overfitting, underfitting, and consistency as asymptotic notions ($n \rightarrow \infty$) in a concrete analytical setting introduced in Subsection \ref{sec:set-up}.  

Before we describe our setting, it is helpful to consider the 1-NN classifier so as to get a better understanding of the problem of overfitting and the classical approaches to mitigating the same.  Let $l_n: \{ \x_1, \dots, \x_n\} \rightarrow \{ 0,1\}$ be the label function defined by $l_n(\x_i) = \y_i$.  The $1$-NN classifier, $u_{\1}^n$, is constructed by extending the function $l_n$, which is only defined on the point cloud, to the whole domain $D$ as described earlier. Since the labels $\y_i$ are random variables given $\x_i$, the function $l_n$ may take very different values at neighboring $\x_i$ and $\x_j$. The highly oscillatory nature of $l_n$ means that as $n \rightarrow \infty$ the function $l_n$ may not resemble any function $u$ defined on the whole domain $D$. The function $l_n$ will instead resemble a distribution, where at each point $x \in D$ one may have the value $1$ with certain probability and the value $0$ with certain probability.  In the language of modern analysis, we do not have compactness in the space of measurable functions, but instead in the space of Young measures. However, each classifier $u_{\1}^n$ is a function that when restricted to the training data coincides with the label function $l_n$. In particular, it minimizes the empirical risk, which for a function $u: D \rightarrow \R$ is defined as
\[  R_n(u) : = \frac{1}{n } \sum_{i=1}^{n} \lvert u(\x_i) - \y_i \rvert  = \frac{1}{n } \sum_{i=1}^{n} \lvert u(\x_i) - l_n(\x_i) \rvert. \]
Thus, if one seeks to construct a classifier via unconstrained empirical risk minimization then even basic properties, such as being a function, may be lost in the limit. This is partly due to the limitation that the functional $R_n$ is truly a functional defined for functions on the point cloud: $u_n : \left\{ \x_1, \dots, \x_n\right\} \rightarrow \R$.

Classically, the main approach for avoiding the problem of overfitting is to restrict, either explicitly or implicitly, the family of classifiers considered when trying to minimize the empirical risk $R_n$. After a family $\mathcal{F}$ of functions is specified, one must then prove asymptotic consistency, usually obtained by analyzing the variance and the bias associated to $\mathcal{F}$. One of the first main theoretical tools developed for the purpose of analyzing the variance is VC (Vapnik--Chervonenkis) theory. In VC theory, the shattering number $\mathcal{N}(\mathcal{F},n)$ of a family of functions $\mathcal{F}$ is defined by
\[
\mathcal{N}(\mathcal{F},n) := \max_{(x_i)_{i=1\dots n}} |\mathcal{F}_{(x_i)}|,
\]
where $\mathcal{F}_{(x_i)}$ is the restriction of the functions in $\mathcal{F}$ to the set $(x_i)$ and $|\mathcal{F}_{(x_i)}|$ is the number of distinguishable elements in $\mathcal{F}_{(x_i)}$. In essence, the shattering number gives one relevant measure of the capacity of the family of functions $\mathcal{F}$ to overfit a set of data points. One of the central results in VC theory is that if 
\[
\frac{\log_2 \mathcal{N}(\mathcal{F},n)}{n} \to 0
\]
then the empirical risk $R_n$ converges in probability uniformly (over $\mathcal{F}$) towards $R$. VC theory, and its many extensions, provide a powerful tool for proving asymptotic consistency. However, in many situations estimating the shattering number of a class of functions can be a challenging combinatorial problem.

As stated, the shattering number is defined in terms of some explicit family of classifiers $\mathcal{F}$. However, it is also possible to implicitly restrict the family of classifiers by minimizing a regularized empirical risk function of the form
\[ \min_{u: D \rightarrow \R}    R_n(u) + \lambda \Omega(u), \]
where $\Omega$ is some functional measuring the complexity of the classifier $u$. For example, $\Omega$ may be some integral of $\nabla u$, i.e. a TV or Sobolev norm. In this setting $\lambda$ is known as a regularization parameter, which specifies a tradeoff between fidelity ($R_n$) and smoothness $(\Omega)$. In this context VC theory can still be applied to the family of functions $\mathcal{F} = \{u : \Omega(u) <C\}$ if suitable combinatorial estimates are satisfied. A helpful overview of some of the classical techniques used to prove consistency is \cite{vLS2011}, and a standard reference addressing some of these topics is \cite{Vapnik1998}.

The classical theory outlined previously is based on classifiers that are \textit{extrinsic} to the data, in the sense that in both cases one considers a notion of complexity of families of functions defined on the \textit{whole underlying domain $D$}. This approach is very powerful in many settings, but can be difficult to apply in practice. The extrinsic approach may also be challenging when information about $D$ is limited and one is forced to work with families of functions defined on the whole ambient space $\R^d$ which may not be tailored to the geometry of $D$.  In this paper we take a different point of view and consider an \textit{intrinsic} approach, namely we first seek to construct a suitable function defined on the data cloud. In particular, we focus on a \textit{regularized empirical risk minimization} problem of the form 
\begin{equation} \label{eqn:intro1}  \min_{u_n}  R_n(u_n) + \lambda \Omega_n(u_n),\end{equation}
where $u_n$ is a function taking values on the point cloud $\{\x_1, \dots, \x_n\}$ and $\Omega_n$ is a regularizer constructed from the point cloud. This paper specifically addresses the asymptotic behavior of minimizers of the above regularized empirical risk minimization problem when $\Omega_n$ is the \textit{graph total variation} defined in \eqref{GTV} below. This functional depends on the construction of a proximity graph based on the point cloud and a parameter $\veps$ which specifies the connectivity of the graph. 

In establishing a consistency result, we need a suitable metric for comparing functions on a point cloud, namely minimizers of \eqref{eqn:intro1}, with functions defined on all of $D \subset \R^d$, namely $u_B$. In particular, we utilize the  $TL^1(D)$ metric space introduced in \cite{GarciaTrillos2015} (see \eqref{TL1def} below for its definition).  The $TL^1(D)$ metric space turns out to be very useful when stating our definitions of (asymptotic) overfitting, underfitting and consistency for different asymptotic regimes of $\lambda$. We show that if the regularizer is too weak ($\lambda$ small with respect to $\veps$), then the minimizers of the regularized empirical risk, despite forming a Cauchy sequence in $TL^1(D)$, do not converge to an element in the metric space $TL^1(D)$. In the completion of this metric space, the limit can be interpreted as a distribution, or Young measure, and not a function: this is an overfitting regime.
If the regularizer is too strong ($\lambda$ not decaying to zero), then the minimizers obtained are too regular and in the limit ($TL^1(D)$-limit) one recovers a regular function; when $\lambda \rightarrow \infty$ one recovers the most extreme case of underfitting. Finally, there is an `ideal' scaling regime where one recovers the Bayes classifier $u_B$ in the limit: this is an asymptotic  consistency result.

We also provide a simple means of constructing a classifier $u: \R^d \to \{0,1\}$ from the minimizer of the problem \eqref{eqn:intro1}. To this end, define the Voronoi extension (or $1$-NN extension) of a function $u_n: \left\{\x_1, \dots, \x_n \right\} \rightarrow \left\{ 0,1\right\}$  by
\begin{equation}
  u_n^V (x) = \sum_{i=1}^{n} u_n(\x_i) \textbf{1}_{V_i^n}(x); 
\label{VoronoiExtension}
\end{equation}
where $V_i^n$ is the set of points in $D$ whose closest point among $\left\{ \x_1, \dots, \x_n\right\}$ is $\x_i$; this set is called the Voronoi cell of the point $\x_i$. In simple words, the label assigned to a point $x\in D$ is the value of $u_n$ at its closest neighbor in the set $\left\{\x_1, \dots, \x_n \right\}$. The last theorem in this work proves that the Voronoi extensions of the minimizers of \eqref{eqn:intro1} indeed converge to the Bayes classifier when $\lambda$ scales appropriately.

In summary, we decompose the process of constructing a classifier into two steps. The first step involves solving a discrete, convex optimization problem, namely finding a minimizer of \eqref{eqn:intro1}.  The second step involves extending the minimizer via the Voronoi extension. This process is \textit{intrinsic} in the sense that it assumes no a priori information about the distribution, and uses only information derived from the point cloud. 

%
%

There are several noteworthy features of this approach. First, the (limiting) family of classifiers attainable by this method is very broad, namely the family of $BV$ classifiers. In other words, the structural assumptions on the limit are quite weak, giving the method significant flexibility. Second, very little information is required about the initial distribution $\bnu$. In particular no information is needed about the support of $\bnu$, besides it being supported on some open, sufficiently regular set. The case in which $\bnu$ is supported on an embedded submanifold $\M \subseteq \R^d$ (with lower intrinsic dimension) can be addressed with similar techniques, but we will present the details elsewhere.

Our analytical framework differs from the classical learning theory approach 
in two main aspects. First,  \textit{regularity} of a minimizer of the functional \eqref{eqn:intro1} is enforced by the  $\Omega_n$ term and an appropriate choice of the parameter $\lambda_n$. In turn, this regularity guarantees the needed compactness in the appropriate metric space so as to guarantee the asymptotic consistency and avoid overfitting. 
Second, we directly compare minimizers of the empirical energies with minimizers of the analogous continuum (population level) energies, as opposed to studying only bounds on energy differences. Our point of view is amenable to analysis using transparent, modern tools from mathematics. These tools can be used both to prove important theoretical results, such as the consistency result of this paper, as well as to provide new insights into certain phenomena.  For example, the metric that we use in this paper provide clear means for defining asymptotic notions of over and underfitting. In particular, overfitting can be seen in terms of a loss of compactness, or convergence towards a non-trivial Young measure.

\subsection{Set-up}\label{sec:set-up}
To start developing the ideas presented in  the introduction, we first need to be more precise about the notions and assumptions we consider in this paper. 

Let $D \subseteq \R^d$ be a bounded, connected, open set with Lipschitz boundary. We measure the distance between two elements in $D$ with the Euclidean distance in $\R^d$.

We let $\nu$, the distribution of features, be given by $d \nu = \rho dx$, where $\rho: D \rightarrow \R$ is a continuous density function defined on $D$. We will assume that $\rho$ is bounded above and below by positive constants, that is, we assume that there are constants $0<m,M$ such that
\begin{equation}
m \leq \rho(x) \leq M , \quad \forall x \in D. 
\label{BoundsRho}
\end{equation}

We let $bnu$, the joint distribution of features and labels, be given by a Borel probability measure on $\R^d \times \R$ whose support is contained in $\overline{D} \times \left\{0,1 \right\} $ and whose first marginal is $\nu$. That is, 
for every Borel set $A \subseteq \R^d$, 
\[ \bnu(A \times \left\{0,1 \right\}) = \nu(A \cap D) = \int_{A \cap D} \rho(x) dx.   \]

For a random variable $(\x, \y)$ distributed according to $\bnu$, we let $\bnu_x$ be the conditional distribution of $\y$ given $\x=x$. That is, we use the disintegration theorem to write $\bnu$ as
\[  \bnu(A \times I) = \int_A\left(\int_{I} d \bnu_x(y)  \right) d \nu(x), \]
for all $A$ Borel subset of $D$ and for every interval $I \subseteq \R$. Expressed simply, $\bnu_x$ represents the distribution of labels of an object/individual with features $\x=x$. 

We let $\mu: D  \rightarrow \R$ be the conditional mean function, defined by
\begin{equation}
\mu(x):= \int_{\{0,1\}} y d\bnu_x(y) = \bnu_x( \left\{ 1 \right\} ) = \mathbb{P}(\y=1 | \x =x ).
\label{mu}
\end{equation}
The Bayes classifier $u_B: D \rightarrow \R$ is defined by
\begin{equation}
u_B(x):=
\begin{cases}
1 , \quad \text{if } \mu(x) \geq 1/2 \\
0, \quad \text{otherwise}. 
\end{cases}
\label{BayesClassifier} 
\end{equation}

It is straightforward to check that $u_B$ is a minimizer over $L^1(\nu)$ of the risk functional  
\begin{equation}
R(u) := \int_{D \times \R}|u(x) - y| d\bnu(x,y)  = \int_{D} \left( \int_{\R } \lvert  u(x) -  y \rvert     d \bnu_{x}(y) \right) d \nu(x),
\label{TrueRisk}
\end{equation}
where $L^1(\nu)$ is the space of real-valued functions integrable with respect to the measure $\nu$.

For ease of presentation, it will be desirable for $u_B$ to be the unique minimizer of $R$. To this end, observe that on the set $\{x \in D : \mu(x) = 1/2\}$, we may modify $u(x)$ to take any value in $[0,1]$ without increasing the value of $R$. Thus for $u_B$ to be unique, it is necessary to assume that
\begin{equation}\label{eqn:mu-neq-half}
 \nu \left( \left\{x \in D \: : \:  \mu(x)\not = 1/2 \right\} \right) =1.
 \end{equation}
In light of \eqref{BoundsRho}, this is equivalent to the statement $\mu \neq 1/2$ Lebesgue-a.e.

This condition is in fact sufficient for $u_B$ to be the unique minimizer of the risk functional $R$ over the class of $L^1(\nu)$-functions. Indeed, suppose that $u$ minimizes $R$. It is clear that if the set where $u$ takes values not in $[0,1]$ has non-zero measure, then $u$ can not be a minimizer of $R$; hence $u$ takes values in $[0,1]$ only. 
Now, given that $u$ takes values in $[0,1]$ only, we can write:
\begin{align}
\begin{split}
R(u)&= \int_{D} \left( \int_{\R} \lvert u(x) - y   \rvert d \bm{\nu}_x(y)   \right) d \nu(x)
\\& = \int_{D} \left( \lvert u(x) - 1 \rvert \mu(x) + \lvert  u(x) \rvert (1- \mu(x))      \right) d \nu(x) 
\\ & =  \int_{D} \left( ( 1-u(x))  \mu(x)+   u(x)  (1- \mu(x))      \right) d \nu(x)
\\ &=  \int_{D} \left( ( 1-u(x))  \mu(x)+   u(x)  (1- \mu(x))      \right) d \nu(x)
\\&=\int_{D}\mu(x) d \nu(x) + \int_{D}(1- 2 \mu(x))u(x) d \nu(x).
\end{split}
\end{align}
Now, by the definition of $u_B$, for any $u(x)$ only taking values in $[0,1]$ we have that $(1- 2 \mu(x)) u(x) \geq (1- 2 \mu(x)) u_B(x)$ for all $x \in D$. Under the assumption \eqref{eqn:mu-neq-half} this inequality can only be an equality at $\nu$ a.e. $x$ if $u = u_B$.
From this it follows that $R$ has a unique minimizer (the Bayes classifier) if and only if the set of $x$ with $\mu(x)=1/2$ has $\nu$-measure zero.

In addition to assumption \eqref{eqn:mu-neq-half}, which guarantees the uniqueness of minimizers for $R$, we also assume that $\nu( \left\{ x \in D \: : \: u_B(x) =1\right\})  \not = 1/2$, or in other words that the Bayes classifier has only one median.
We denote by $u^\infty$ the median of $u_B$, that is, 
\begin{equation}
u^\infty :=  \begin{cases} 1 & \text{ if }  \nu( \left\{ x \in D \: : \: u_B(x) =1\right\}) > 1/2  \\ 0 & \text{ otherwise. }
\end{cases}
\label{uinfty}
\end{equation}
It is then straightforward to check that $u^\infty$ is the unique minimizer of $\min_{ y \in \R} R(y)$.

We additionally make some weak regularity assumptions on the functions $\mu$ and $u_B$. We assume that the function $\mu$ is continuous at $\nu$-a.e. $x \in D$. 
In particular, $\mu$ is allowed to have discontinuities as long as the set at which $\mu$ is discontinuous is $\nu$-negligible. This assumption models the continuity of the law of $\y$ given that $\x=x$, as $x$ changes.
Also, we assume that $u_B$ is a function with finite total variation (we recall the definition of total variation in \eqref{TV}). We notice that the assumption on the regularity of the Bayes classifier, that is the regularity of the interface between the regions where $u_B=1$ and $u_B= 0$, is very mild. Specifically it only requires that the interface has finite perimeter; the notion of perimeter we use is that of Caccioppoli (see \cite{AFP}). 

Now let us consider $(\x_1, \y_1), \dots, (\x_n , \y_n)$ i.i.d. samples from $\bnu$. These are the training data representing $n$ objects/individuals with features $\x_i$ and corresponding labels $\y_i$. We denote by $\bnu_n$ the empirical measure
\[ \bnu_n:= \frac{1}{n}\sum_{i=1}^{n} \delta_{(\x_i,\y_i)}   \]  
and by $\nu_n$ the measure
\[ \nu_n := \frac{1}{n}\sum_{i=1}^{n}\delta_{\x_i}. \]
Observe that $\bnu_n $ is a measure on $D \times \R$ and $\nu_n$ a measure on $D$. 

The labels $\y_i$ define a \textit{label function} $l_n \in L^1(\nu_n)$, where 
$l_n :\left\{ \x_1 , \dots, \x_n \right\} \rightarrow \left\{0,1 \right\}$ and
\begin{equation}
 l_n(\x_i) := \y_i , \quad \forall i =1, \dots, n. 
 \label{labelfunc}
 \end{equation}
 In the above and in the remainder of the paper, $L^1(\nu_n)$ represents the space of integrable functions with respect to the measure $\nu_n$, i.e., real-valued functions whose domain is the set $\left\{\x_1, \dots, \x_n \right\}$.

Associated to the sample $(\x_1,\y_1), \dots, (\x_n, \y_n)$, we consider the \textit{empirical risk} functional $R_n : L^1(\nu_n) \rightarrow \R$ given by
\[  R_n( u_n) := \int_D \lvert u_n(x)- l_n(x) \rvert d \nu_n(x) = \frac{1}{n} \sum_{i=1}^{n} \lvert u_n(\x_i) - \y_i \rvert, \quad u_n \in L^1(\nu_n). \] 
We notice that the risk functional is intrinsic to the data, as it can be defined completely in terms of the values of $(\x_i, \y_i)$ for any arbitrary function $u_n \in L^1(\nu_n)$. We remark that if $u_n$ takes only values in $\left\{0,1 \right\}$, then $R_n(u_n)$ is simply the fraction of discrepancies between $u_n$ and the labels $\y_i$.  We also observe that using the empirical measure $\bnu_n$, the empirical risk functional $R_n$ may be written as
\[ R_n(u_n) = \int_{D \times \R} |u_n(x) - y| d\bnu_n(x,y)  .\]
When written in this form, we see that $R_n$ resembles the true risk \eqref{TrueRisk}. The main difference between $R_n$ and $R$ is that the argument of $R_n$ is a function $u_n \in L^1(\nu_n)$, whereas the argument of $R$ is a function $u \in L^1(\nu)$.

As we stated previously, the unique minimizer of the true risk functional \eqref{TrueRisk} is the Bayes classifier $u_B$ defined in \eqref{BayesClassifier}. On the other hand, it is evident that the function $l_n$ is the unique minimizer of the empirical risk $R_n$ among functions $u_n \in L^1(\nu_n)$. Despite the resemblance between $R_n$ and $R$, we can not expect to obtain $u_B$ as the limit of the functions $l_n$ in any reasonable topology.  As discussed in the introduction, this is due to the fact that the functions $l_n$ are ``highly oscillatory'' as $n \to \infty$, and hence can not converge to a function. To buffer the high oscillation of the functions $l_n$, while still being faithful to the labels $\y_i$,  one seeks to minimize a  risk functional with an extra ``regularizing'' term. To be more precise, we first consider a kernel $\eta: [0,\infty) \rightarrow [0, \infty)$ not identically equal to zero and satisfying the following assumptions:
\begin{itemize}
\addtolength{\itemsep}{2pt}
\item[\textbf{(K1)}] $\eta$ is non-increasing.
\item[\textbf{(K2)}] The integral $\int_{0}^{\infty} \eta(r) \, r^d dr $ is finite.
\end{itemize}
We note that the class of admissible kernels is  broad and includes
both Gaussian kernels and discontinuous kernels like one defined by
 $\eta$ of the form $\eta=1$ for $r \leq 1$ and $\eta=0$ for $r>1$.  
 The assumption (K2) is equivalent to imposing that the quantity 
 \begin{equation} \label{sigma_eta}
 \sigma_\eta:= \int_{\R^d} \eta(|h|)|h_1|dh,
\end{equation}
is finite, where $h_1$ is the first coordinate of the vector $h$. We refer to $\sigma_\eta$ as the \textit{surface tension} of the kernel $\eta$. Also, we will often use a slight abuse of notation and for a vector $h \in \R^d$ write $\eta(h)$ instead of $\eta(|h|)$.  

We make an additional assumption on $\eta$, namely,
\begin{equation}
\eta(r) \geq 1 , \quad \forall r \in [0,2].
\label{ExtraAssump}
\end{equation}
This assumption is mainly for convenience, since any kernel satisfying (K1) and (K2) can be rescaled to satisfy \eqref{ExtraAssump}.

Having chosen the kernel $\eta$, we choose $\veps>0$ and construct a weighted geometric graph with vertices $\{  \x_1, \dots, \x_n\}$; the parameter $\veps$ defines a length scale which determines the connectivity of the point cloud. The weights of this graph are given by
\[ W_{ij} := \eta_{\veps}(\x_i- \x_j), \]
where
\[\eta_{\veps}(z):=\frac{1}{\veps^d}\eta\left( \frac{z}{\veps} \right).\]
For a function $u_n \in L^1(\nu_n)$, namely a function whose domain is the vertices of the graph $(\{\x_n\},W)$, we define the \textit{graph total variation} by 
\begin{equation}
  GTV_{n , \veps}(u_n):= \frac{1}{ n^2 \veps^{d+1}}\sum_{i=1}^{n} \sum_{j=1}^{n}  \eta\left( \frac{\x_i - \x_j}{ \veps} \right)  \left| u_n(\x_i) - u_n(\x_j) \right|. 
\label{GTV}  
\end{equation}
The graph total variation was previously used in \cite{GarciaTrillos2015,TSvBLX_jmlr} in connection to approaches to clustering using balanced graph cuts.

In this work we will analyze the \textit{regularized empirical risk functional} given by
\begin{equation}
 R_{n, \lambda}(u_n) :=   \lambda GTV_{n , \veps} (u_n)    + R_n(u_n) , \quad  u_n \in L^1(\nu_n).  
\label{EmpRegRisk}
\end{equation}
Here $\lambda>0$ is a parameter whose role is to emphasize or deemphasize the effect of the regularizer $GTV_{n, \veps}$.
We will generally assume that $\lambda$ and $\veps$ are allowed to vary as $n \to \infty$ (written $\lambda_n$ and $\veps_n$); this is natural in light of the results in \cite{GarciaTrillos2015}, which require specific decay rates on $\veps_n$.

The functional $R_{n , \lambda}$ is similar to the (ROF) model with $L^1$-fidelity term used in the context of image denoising (see \cite{Chambolle3,Nikolova}), but our setting and motivation is different 
from that in  \cite{Chambolle3,Nikolova}, as the functional $R_{n, \lambda}$ is constructed from a random sample $\{(\x_i,\y_i)\}_{i=1\dots n}$ of an unknown distribution $\bnu$. 
We remark that the $L^1$-fidelity term is well suited for the task of classification because it naturally generates functions valued in $\{0,1\}$, or, in other words, sparse functions. 
Numerical methods designed to find an approximate minimizer of \eqref{EmpRegRisk} can be found in \cite{Oman}; on the other hand an augmented Lagrangian approach to find the exact minimizer of \eqref{EmpRegRisk} can be found in \cite{Esser};
See also \cite{Chambolle3} and the references within.

The analogue of the functional $R_{n , \lambda}$ in the continuous setting is the functional
\begin{equation}
R_{\lambda}(u):= \lambda \sigma_\eta TV(u) + R(u) , \quad u \in L^1(\nu);
\label{RegRisk}
\end{equation}
where in the above, $TV$ denotes the (weighted by $\rho^2$) \textit{total variation} of the function $u \in L^1(\nu)$, which is defined by
\begin{equation}
 TV(u):= \sup \left\{ \int_{D} \divergence(\phi) u  dx \: : \:   \phi \in C^1_c(D : \R^d) , \text{ and } \lVert \phi(x) \rVert   \leq \rho^2(x) , \quad  \forall x \in D      \right\}. 
 \label{TV}
 \end{equation}
If the above quantity is finite, we say that $u \in L^1(\nu) $ is a function with bounded (weighted by $\rho^2$) variation. We have included the surface tension $\sigma_\eta$ in the definition of $R_\lambda$ in light of the results from \cite{GarciaTrillos2015} which state that $\sigma_\eta TV$ is the $\Gamma$-limit 
(we will make this precise in Theorem \ref{ContTV} below) of the functionals $GTV_{n, \veps}$, when $\veps$ scales with $n$ appropriately.


In order to state the main results of the paper, one needs a suitable metric for comparing functions in $L^1(\nu_n)$ with functions in $L^1(\nu)$. We consider the $TL^1$-metric space that was introduced in \cite{GarciaTrillos2015}.

We denote by $\mathcal{P}(D)$ the set of Borel probability measures on $D$. The set $TL^1(D)$ is defined as
\begin{equation}
 TL^1(D) := \{ (\theta, f) \; : \:  \theta \in \mathcal P(D), \, f \in L^1(D, \theta) \}. 
 \label{TL1def}
\end{equation}
That is, elements in $TL^1(D)$ are of the form $(\theta, f)$ , where $\theta$ is a probability measure on $D$ (in this paper we will take $\nu$ or $\nu_n)$, and $f \in L^1(\theta)$, that is $f$ is integrable with respect to $\theta$.
This space can be seen as a formal fiber bundle over $\mathcal P(D)$; the fibers are the different $L^1$-spaces corresponding to the different Borel probability measures over $D$. 

We endow $TL^1(D)$ with the metric
\begin{align} \label{tlpmetric}
\begin{split}
 d_{TL^1}((\theta_1,f_1), (\theta_2,f_2)) :=
 \inf_{\pi \in \Gamma(\theta_1, \theta_2)} \left( \iint_{D \times D} |x_1-x_2| + |f_1(x_1)-f_2(x_2)|  d\pi(x_1,x_2)  \right),
\end{split}
\end{align}
where $\Gamma(\theta_1, \theta_2)$ represents the set of couplings, or transportation plans between $\theta_1$ and $\theta_2$. That is, an element $\pi \in \Gamma(\theta_1, \theta_2)$ 
is a Borel probability measure on $D \times D$ whose marginal on the first variable is $\theta_1$ and whose marginal on the second variable is $\theta_2$. In \cite{GarciaTrillos2015} it is proved that $d_{TL^1}$ is indeed a metric.

Let us now discuss a characterization of $TL^1$-convergence of a sequence of functions $\left\{ u_n \right\}_{n \in \N}$  with $u_n \in L^1(\nu_n)$ towards a function $u \in L^1(\nu)$; we use this characterization in the remainder. 
We recall that a Borel map $T_n : D \rightarrow \left\{ \x_1, \dots, \x_n \right\}$ is said to be a transportation map between the measures $\nu$ and $\nu_n$, if for all $i$, $T_n^{-1}\left( \left\{ \x_i \right\}  \right)$ has $\nu$-measure equal to $1/n$.
The results from \cite{W8L8canadian}, imply that with very high probability, i.e. probability greater than $1- n^{- \beta}$ (for $\beta$ any number greater than one), there exists a transportation map $T_n$ between $\nu$ and $\nu_n$, such that
\begin{equation}
\lVert T_n - Id   \rVert_{L^\infty(\nu)} \leq \frac{C_\beta \log(n)^{p_d} }{n^{1/d}},
\label{NiceTranspMaps}
\end{equation}
where $p_d$ is a constant depending on dimension and is equal to $1/d$ for $d \geq 3$ and equal to $3/4$ when $d=2$; $C_\beta$ is a constant that depends on $\beta$, $D$ and the constants from \eqref{BoundsRho}. 
Notice that from Borell-Cantelli lemma and the fact that $\frac{1}{n^{\beta}}$ is summable, we can conclude that with probability one, we can find a sequence of transportation maps $\left\{ T_n \right\}_{n \in \N}$, such that for all large enough $n$, \eqref{NiceTranspMaps} holds. 
We refer the interested reader to \cite{W8L8canadian} for more background and references on the problem of finding transportation maps between some distribution and the empirical measure associated to samples drawn from it.

It is shown in \cite{GarciaTrillos2015} (see Proposition \ref{EquivalenceTLp} below) that $(\nu_n, u_n) \converges{TL^1} (\nu, u)$ if and only if $u_n\circ T_n \converges{L^1(\nu)} u$, where $T_n$ are the maps from \eqref{NiceTranspMaps} 
(which exist with probability one). We abuse notation a bit and simply say that $u_n \converges{TL^1} u$ in that case, understanding that $u_n \in L^1(\nu_n)$ and $u \in L^1(\nu)$.

\subsection{Main results}
\label{mainresults}

The first main result of this paper is related to the study of the limiting behavior of $u_n^*$ defined by:
\begin{equation}
  u_n^*:= \argmin_{u_n \in L^1(\nu_n)} R_{n, \lambda_n}(u_n),
  \label{ustar}
\end{equation}
under different asymptotic regimes for $ \left\{ \lambda_n \right\}_{n \in \N}$. 

\begin{theorem}
Suppose that $(\x_1,\y_1), (\x_2,\y_2), \dots, (\x_n, \y_n), \dots $ are i.i.d. random variables distributed according to $\bnu$. Consider a sequence $\left\{ \veps_n \right\}_{n \in \N}$ satisfying

\begin{equation}
 \frac{ (\log(n))^{p_d}}{n^{1/d}}  \ll \veps_n \ll 1, 
 \label{vepsn}
 \end{equation}

where $p_d= 1/d$ when $d \geq 3$ and $p_2 = 3/4$.  Additionally, let $\left\{ \lambda_n \right\}_{n\in \N}$ be a sequence of positive real numbers. 

\begin{enumerate}
\item If $\lambda_n \ll \veps_n $ as $n \rightarrow \infty$ then, with probability one, $u_n^*= l_n$ for $n$ sufficiently large and $u_n^*$ does not converge in the $TL^1$-sense towards any function $u \in L^1(\nu)$. In addition, 
\[ \lim_{n \rightarrow \infty} R_{n}(u_n^*) =0.\]

\item If $\veps_n \ll \lambda_n \ll 1 $ as $n \rightarrow \infty$ then, with probability one, $u_n^*$ converges in the $TL^1$-sense towards the Bayes classifier $u_B$. In addition,
\[ \lim_{n \rightarrow \infty} R_{n}(u_n^*) = R(u_B).\]

\item If $ \lambda_n \rightarrow \lambda \in (0,\infty) $ as $n \rightarrow, \infty$ then, with probability one, every subsequence of $\left\{ u_n^* \right\}_{n \in \N}$ 
has a further subsequence that converges to a minimizer of $R_\lambda$ defined in \eqref{RegRisk}. In addition,
\[ \lim_{n \rightarrow \infty} R_{n , \lambda_n}(u_n^*) = \min_{u \in L^1(\nu)} R_{\lambda}(u). \]

\item If $\lambda_n \rightarrow \infty$ as $n \rightarrow \infty$ then, with probability one, $u_n^*$ converges in the $TL^1$-sense towards the constant function $u^\infty$ defined in \eqref{uinfty}. In addition,
\[ \lim_{n \rightarrow \infty} R_{n}(u_n^*) = \min_{y \in \R}R(y).      \]

\end{enumerate}  
\label{mainTheorem}
\end{theorem}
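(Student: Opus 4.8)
The plan is to combine two facts already available with a handful of elementary comparisons. The first fact is the $\Gamma$-convergence $GTV_{n,\veps_n}\converges{\Gamma}\sigma_\eta TV$ in $TL^1(D)$ of Theorem~\ref{ContTV}, together with its compactness companion: any $v_n\in L^1(\nu_n)$ with $\sup_n\norm{v_n}_{L^\infty}<\infty$ and $\sup_n GTV_{n,\veps_n}(v_n)<\infty$ is precompact in $TL^1(D)$, with every cluster point in $BV(D)$. The second is a \emph{stability of the fidelity term}: if $v_n\converges{TL^1}v$ and $\sup_n GTV_{n,\veps_n}(v_n)<\infty$, then $R_n(v_n)\to R(v)$ almost surely. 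I would deduce this from the identity $R_n(v)=\frac1n\sum_i\y_i+\int v(1-2l_n)\,d\nu_n$ (valid for $[0,1]$-valued $v$), whose conditional expectation given $\x_1,\dots,\x_n$ is $\bar R_n(v):=\int\min(\mu,1-\mu)\,d\nu_n+\int\abs{v-u_B}\abs{1-2\mu}\,d\nu_n$, by controlling the gap $R_n(v)-\bar R_n(v)=2\int v(\mu-l_n)\,d\nu_n$ uniformly over $\{\norm{v}_{L^\infty}\le 1,\ GTV_{n,\veps_n}(v)\le K\}$ through a concentration inequality combined with a metric-entropy bound for $GTV$-sublevel sets. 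Using the coarea formulas for $GTV_{n,\veps_n}$ and $R_n$ (legitimate since $l_n\in\{0,1\}$) I may assume $u_n^*$ is $\{0,1\}$-valued. I also record, a.s.\ by the strong law, $\frac1n\sum_i\y_i\to\int\mu\,d\nu$ and $\int\min(\mu,1-\mu)\,d\nu_n\to R(u_B)$; and that $u_B\in BV(D)$ is continuous $\nu$-a.e., hence $u_B\circ T_n\to u_B$ in $L^1(\nu)$ for the transport maps $T_n$ of \eqref{NiceTranspMaps}.

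\textbf{Regimes (3) and (4).} In both cases the competitor $v\equiv 0$ gives $R_{n,\lambda_n}(u_n^*)\le R_n(0)=\frac1n\sum_i\y_i\le 1$, hence $GTV_{n,\veps_n}(u_n^*)\le 1/\lambda_n$. In (3) this is eventually $\le 2/\lambda$, a \emph{uniform} bound, so $\{u_n^*\}$ is $TL^1$-precompact; one then checks $R_{n,\lambda_n}\converges{\Gamma}R_\lambda$ --- the $\liminf$ inequality from the $\Gamma$-$\liminf$ for $\lambda_n GTV_{n,\veps_n}$ together with the stability of $R_n$ (applicable because a finite limiting energy forces a bounded $GTV$), the $\limsup$ inequality from a recovery sequence for $TV$ and the same stability --- and the standard consequence of $\Gamma$-convergence plus compactness identifies each cluster point as a minimizer of $R_\lambda$ and gives $R_{n,\lambda_n}(u_n^*)\to\min R_\lambda$. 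In (4), $GTV_{n,\veps_n}(u_n^*)\le 1/\lambda_n\to 0$, so every cluster point $u$ obeys $\sigma_\eta TV(u)=0$ and is therefore constant ($D$ connected); testing $R_{n,\lambda_n}$ against arbitrary constants yields $\limsup_n R_{n,\lambda_n}(u_n^*)\le\min_c R(c)$, while $R_{n,\lambda_n}(u_n^*)\ge R_n(u_n^*)\to R(u)$ along the subsequence, so $u$ minimizes $R$ among constants and, the median being unique, $u=u^\infty$; uniqueness of the cluster point upgrades this to $u_n^*\converges{TL^1}u^\infty$, and $R_n(u_n^*)\to R(u^\infty)=\min_y R(y)$.

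\textbf{Regime (2): consistency.} Let $\hat u_n$ be a recovery sequence for $u_B$ in Theorem~\ref{ContTV}, truncated to $[0,1]$, so $\hat u_n\converges{TL^1}u_B$, $GTV_{n,\veps_n}(\hat u_n)\to\sigma_\eta TV(u_B)<\infty$, and (by stability) $R_n(\hat u_n)\to R(u_B)$. Since $\lambda_n\to 0$,
\[ \limsup_n R_{n,\lambda_n}(u_n^*)\le\limsup_n\bigl(\lambda_n GTV_{n,\veps_n}(\hat u_n)+R_n(\hat u_n)\bigr)=R(u_B), \]
so $\limsup_n R_n(u_n^*)\le R(u_B)$ and $GTV_{n,\veps_n}(u_n^*)\le(R(u_B)+o(1))/\lambda_n=:K_n$. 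For the matching lower bound, write $R_n(u_n^*)=\bar R_n(u_n^*)+2\int u_n^*(\mu-l_n)\,d\nu_n$ and use $\bar R_n(u_n^*)\ge\int\min(\mu,1-\mu)\,d\nu_n+\int\abs{u_n^*-u_B}\abs{1-2\mu}\,d\nu_n$; the concentration/entropy estimate applied with $K=K_n$ makes the error term $o(1)$ a.s. This is exactly where the lower bound $\veps_n\gg(\log n)^{p_d}/n^{1/d}$ is used: since $\lambda_n\gg\veps_n$, the threshold $K_n=O(1/\lambda_n)$ grows slowly enough that the metric entropy of $\{GTV_{n,\veps_n}\le K_n\}$ is dominated by the $n^{-1/2}$ fluctuation scale. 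Combining the two bounds forces $\int\abs{u_n^*-u_B}\abs{1-2\mu}\,d\nu_n\to 0$; transporting by $T_n$ and using $u_B\circ T_n\to u_B$ in $L^1(\nu)$, $\mu\circ T_n\to\mu$ in measure, and $\abs{1-2\mu}>0$ a.e.\ (from $\mu\neq 1/2$ a.e.), I obtain $u_n^*\circ T_n\to u_B$ in $L^1(\nu)$, i.e.\ $u_n^*\converges{TL^1}u_B$, whence $R_n(u_n^*)\to R(u_B)$.

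\textbf{Regime (1), and the main obstacle.} Here I would show $u_n^*=l_n$ exactly. Since $R_n(v)=\norm{v-l_n}_{L^1(\nu_n)}$ and $R_n(l_n)=0$, the triangle inequality gives $\abs{GTV_{n,\veps_n}(v)-GTV_{n,\veps_n}(l_n)}\le\frac{C_n}{\veps_n}\norm{v-l_n}_{L^1(\nu_n)}$ with $C_n=2\max_i\frac1n\sum_j\eta_{\veps_n}(\x_i-\x_j)$, which is $O(1)$ with high probability by \eqref{BoundsRho} and the bandwidth scaling; hence $R_{n,\lambda_n}(v)-R_{n,\lambda_n}(l_n)\ge(1-C_n\lambda_n/\veps_n)\norm{v-l_n}_{L^1(\nu_n)}>0$ once $\lambda_n/\veps_n$ is small, so $l_n$ is the unique minimizer and $R_n(u_n^*)=0$. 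If $l_n$ converged in $TL^1$ to some $u\in L^1(\nu)$, then testing $\frac1n\sum_i g(\x_i)\y_i$ against continuous $g$ and invoking the strong law would force $u=\mu$ $\nu$-a.e.; but $l_n\circ T_n$ is $\{0,1\}$-valued, so its $L^1(\nu)$-limit must be $\{0,1\}$-valued, which is impossible in the presence of genuine label noise (the case of interest; if $\mu\in\{0,1\}$ a.e.\ there is nothing to overfit). I expect the concentration/entropy estimate of regime (2) to be the crux: in regimes (3)--(4) the $GTV$-sublevel sets are of fixed size and the deviation bound is soft, whereas in regime (2) the relevant sublevel set grows like $1/\lambda_n$, and one must exploit the geometry of the random proximity graph to bound its metric entropy sharply enough for the stated scaling of $\veps_n$ (hence of $\lambda_n$) to suffice. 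Everything else reduces to standard $\Gamma$-convergence and compactness or to elementary comparisons.
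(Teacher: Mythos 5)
Your treatment of regimes (3) and (4) matches the paper's argument (energy comparison with constants, the compactness and $\Gamma$-convergence statements of Theorem~\ref{ContTV}, identification of constant limits), and your regime (1) argument is a legitimate, arguably simpler alternative to the paper's convex-duality route: the Lipschitz estimate $\abs{GTV_{n,\veps_n}(v)-GTV_{n,\veps_n}(l_n)}\le \frac{C_n}{\veps_n}\norm{v-l_n}_{L^1(\nu_n)}$ is correct, and $C_n=O(1)$ a.s.\ for large $n$ follows from the transport maps \eqref{NiceTranspMaps} exactly as in the paper (this needs to be said, not just ``with high probability''); the non-convergence statement carries the same implicit non-degeneracy caveat ($\mu\notin\{0,1\}$ on a set of positive measure) that the paper's Lemma~\ref{lnNoConv} does, so I do not count that against you. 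Two small inaccuracies: $u_B\in BV$ does \emph{not} imply $\nu$-a.e.\ continuity (the correct route to $u_B\circ T_n\to u_B$ in $L^1(\nu)$ is a.e.\ continuity of $\mu$ together with $\mu\neq 1/2$ a.e.), and the ``WLOG $u_n^*$ is $\{0,1\}$-valued'' reduction via coarea needs an extra argument when minimizers are not unique (the paper instead works directly with $[0,1]$-valued minimizers, cf.\ Lemma~\ref{LemmaGoodRegime1}).

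The genuine gap is in regime (2), which is the heart of the theorem. Your lower bound hinges on the claim that $\sup\bigl\{\bigl|\int v(\mu-l_n)\,d\nu_n\bigr| : \norm{v}_{L^\infty}\le 1,\ GTV_{n,\veps_n}(v)\le K_n\bigr\}\to 0$ a.s.\ with $K_n\sim 1/\lambda_n\to\infty$, to be obtained from ``a concentration inequality combined with a metric-entropy bound for $GTV$-sublevel sets.'' This estimate is never proved, no entropy bound for the (random, $n$-dependent, growing) class $\{GTV_{n,\veps_n}\le K_n\}$ is exhibited, and you yourself flag it as the unresolved crux; it is also not clear that a generic entropy/chaining bound would cover the full claimed range $\veps_n\ll\lambda_n\ll 1$ rather than forcing a stronger lower bound on $\lambda_n$. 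The paper avoids uniform control over a function class altogether: it first replaces $u_n^*\circ T_n$ by its local averages over a maximal packing of balls of radius $\sim\veps_n$, with approximation error controlled by $\frac{\veps_n}{\lambda_n}\,\lambda_n GTV_{n,\veps_n}(u_n^*)\le \veps_n/\lambda_n\to 0$ (this is precisely where $\veps_n\ll\lambda_n$ enters), and then applies Hoeffding's inequality and a union bound only over the $O(\veps_n^{-d})$ fixed partition-of-unity functions $\psi_z$, with Borel--Cantelli using $\log n/(n\veps_n^d)\to 0$ (this is where the lower bound on $\veps_n$ enters, cf.\ Lemma~\ref{LemmaAuxWeak}); together with the soft weak-convergence facts ($l_n\rightharpoonup\mu$, Proposition~\ref{contRisk}) this yields \eqref{ConditionConvergence} and hence $TL^1$-convergence to $u_B$. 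Until you supply a proof of your uniform concentration claim (or replace it by a localization argument of this type), part (2) — and, as written, also the ``stability of the fidelity term'' you invoke in parts (3)--(4), though there a soft argument via $l_n\rightharpoonup\mu$ suffices — remains unproven.
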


\begin{remark}
The conclusion of the theorem continues to hold even if the sequence $\left\{u_n^* \right\}_{n \in \N}$ is only assumed to be a sequence of almost minimizers of the energies $R_{n,\lambda_n}$.  That is, we only have to assume that
\[  \lim_{n \rightarrow \infty} \left( R_{n, \lambda_n}(u_n^*) -  \min_{u_n \in L^1(\nu_n)}R_{n, \lambda_n}(u_n) \right) =0   \]
for the conclusions of the theorem to be true.
\end{remark}

\begin{remark}
The assumption \eqref{vepsn} provides a natural setting under which the geometric graph is sufficiently well-connected. This was studied in detail in \cite{GarciaTrillos2015}.
\end{remark}

Theorem \ref{mainTheorem} provides a clear characterization of the asymptotic behavior of $u_n^*$ depending on the scaling of the parameter $\lambda_n$.

In the regime $ \veps_n \ll \lambda_n \ll 1 $, we obtain the Bayes classifier as the limit of the functions $u_n^*$ in the $TL^1$-sense. Here we find the balance between enough regularization (so that the limit of $u_n^*$ is a function)
and enough fidelity (so that the limit of $u_n^*$ is not just any function, but the Bayes classifier). We illustrate this regime in
Figure \ref{FigureConsistency}. In that example we have chosen  $D$ to be the unit square $(0,1)^2$ and the measure $\nu$ was chosen to be the uniform distribution on $D$. The function $\mu$ determining the conditional distribution of $\y$ given $\x=x$ was chosen to take two values $0.45$ and $0.55$; in the upper left corner and lower right corner $\mu=0.55$ whereas in the upper right corner and lower left corner $\mu=0.45$. A number of samples from the resulting distribution $\bm{\nu}$ are shown in Figure \ref{fig:Consistency1}. The function $u_n^*$ was constructed using the algorithm proposed in \cite{Esser}; in Figure \ref{fig:Consistency2} we present an appropriate level set of the function $u_n^*$.
 
 \begin{figure}
     \centering
     \begin{subfigure}[b]{0.5\textwidth}
         \includegraphics[width=\textwidth, trim =1cm 5cm 1cm 4cm]{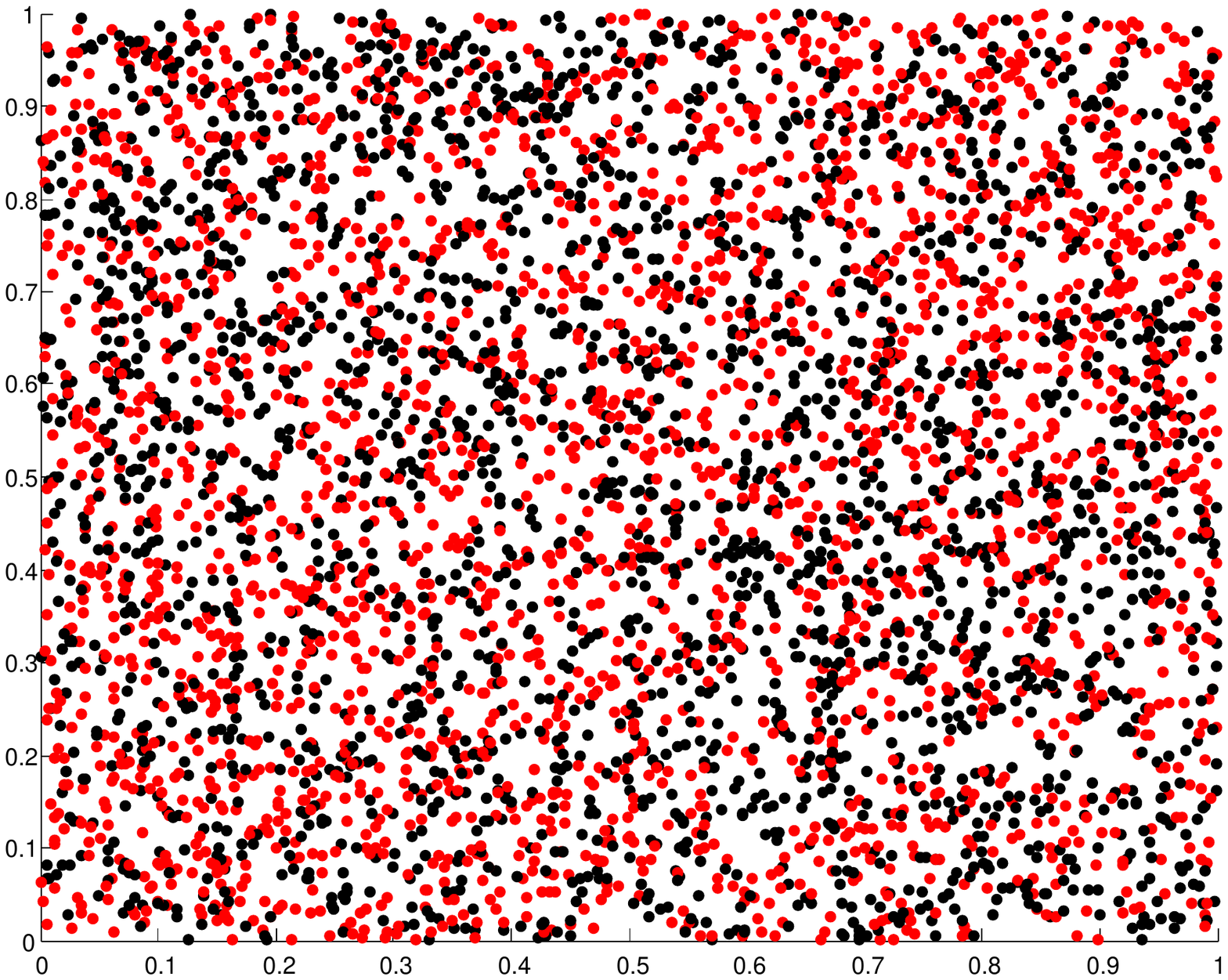}
         \caption{$n=10000$ random samples from $\bnu$.}
         \label{fig:Consistency1}
     \end{subfigure}
     ~
     \begin{subfigure}[b]{0.5\textwidth}
         \includegraphics[width=\textwidth, trim =1cm 5cm 1cm 4cm ]{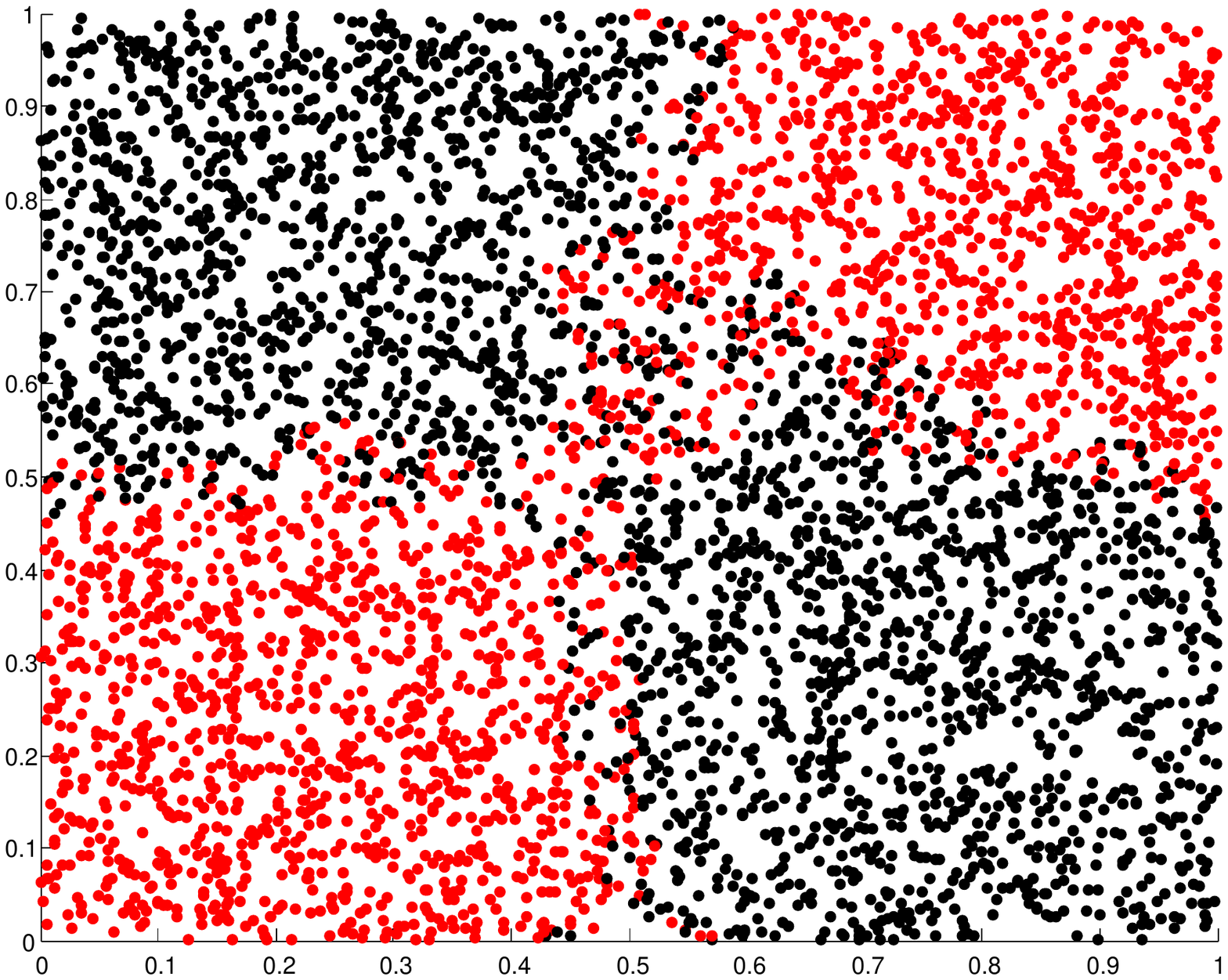}
         \caption{$u_{n}^*$ using $\veps= n^{-1/3}$ and $\lambda= n^{-1/4}$.}
         \label{fig:Consistency2}
     \end{subfigure}
     \caption{Example of consistency regime.}\label{FigureConsistency}
 \end{figure}

In the regime $\lambda_ n \ll \veps_n$, which we will call the \textit{overfitting regime}, the sequence of functions $u_n^*$ minimizing $R_{n, \lambda_n}$ does not converge to $u_B$ in the $TL^1$ sense, and in fact it does not converge to any function $u \in L^1(\nu)$. Instead, $u_n^*$, or in other words $l_n$, converges towards $\bnu$ in the completion of the $TL^1(D)$ space; see Subsection \ref{TLp} for a discussion regarding the completion of $TL^1(D)$. It is important to highlight that the limit of $u_n^*$ is not a function, but a measure (a Young measure more precisely). This type of limit is a consequence of using a regularizer term in the functional $R_{n, \lambda_n}$ that is not strong enough to control the oscillations of the label function $l_n$. In light of this, one could intuitively define overfitting as an asymptotic tendency towards Young measures.

When $\lambda_n \rightarrow \infty$, the functions $u_n^*$ approach  the constant function $u^\infty$ (the median of the Bayes classifier). We may view this regime as an \textit{underfitting regime}: the limit of the functions $u_n^*$ is a very regular function (a constant function) that is as faithful to the labels as possible given the strong regularity constraint. 

Finally, the regime $ \lambda_n \rightarrow \lambda \in(0,\infty)$, interpolates between the regime in which we recover $u_B$ and the regime in which we recover $u^\infty$. Indeed, in this regime we recover (up to subsequence) a function $u_\lambda$ minimizing the regularized risk functional $R_\lambda$ defined in \eqref{RegRisk}. For small values of $\lambda$, $u_\lambda$ should resemble the Bayes classifier, whereas for $\lambda$ large $u_\lambda$ should resemble $u^\infty$. 
This may be viewed as a weak underfitting regime,  which in the limit recovers a regularized version of the Bayes classifier.

Theorem \ref{mainTheorem} provides a type of consistency result for regularized empirical risk minimization as the sample size $n$ goes to infinity. Moreover, this consistency result gives a means of characterizing the statistical notions of overfitting and underfitting through modern analytical notions (such as loss of compactness and Young measures). In this particular case it is also possible to quantify precisely the notions of underfitting/overfitting by means of the asymptotic behavior of the sequence $\left\{ \lambda_n \right\}_{n \in \N}$.

However, at this stage, we have not truly addresed the classification problem. We have only given a means of constructing a suitable function $u_n^*$ defined on the geometric graph $(\{\x_n\}, W)$. Thus, the natural question at this stage is how to construct a ``good'' classifier using $u_n^*$.

Given the definition of $TL^1$ convergence, we know that there exists a family of transportation maps $T_n$ so that $u_n^* \circ T_n \to u_B$ in $L^1(\nu)$. However, without explicit knowledge of $D$ and $\nu$ it is not possible to construct the transport maps $T_n$. Thus we see that while the $TL^1$ space and the transportation maps $T_n$ are useful for the asymptotic analysis of the regularized empirical risk minimization problem, they \textit{do not} immediately build a bridge between such minimization problem and the problem of classification.

Fortunately,  it is possible to construct a good classifier from $u_n^*$  by simply considering  its Voronoi extension. We will show that these extensions converge under slightly less general assumptions than those from Theorem \ref{mainTheorem} towards the Bayes classifier. This is the content of our last main result.

\begin{theorem}
Suppose that $(\x_1,\y_1), (\x_2,\y_2), \dots, (\x_n, \y_n), \dots $ are i.i.d. random variables distributed according to $\bnu$. Consider a sequence $\left\{ \veps_n \right\}_{n \in \N}$ satisfying
\[  \frac{ (\log(n))^{p_d}}{n^{1/d}}  \ll \veps_n \ll 1, \] 
where $p_d= 1/d$ when $d \geq 3$ and $p_2 = 3/4$.  Additionally, let $\left\{ \lambda_n \right\}_{n \in \N}$ be a sequence of positive real numbers satisfying,
\[ (\log(n))^{d\cdot p_d}\veps_n \ll \lambda_n \ll  1. \] 

Then, with probability one,

\[ u_n^{*V} \converges{L^1(\nu)} u_B , \quad \text{ as } n \rightarrow \infty,\]

where $u_n^*$ is a minimizer of $R_{n,\lambda_n}$ and $u_n^{*V}$ is the Voronoi extension (as defined in \eqref{VoronoiExtension}) of $u_n^*$.\label{mainTheorem2}
\end{theorem}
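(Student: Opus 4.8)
The plan is to deduce this from the $TL^1$-convergence already proved in Theorem~\ref{mainTheorem}(2), the only genuinely new ingredient being a comparison between the Voronoi extension $u_n^{*V}$ and the composition of $u_n^*$ with the transportation maps $T_n$ of \eqref{NiceTranspMaps}. Since the hypothesis $(\log(n))^{d\cdot p_d}\veps_n\ll\lambda_n\ll1$ forces $\veps_n\ll\lambda_n\ll1$, Theorem~\ref{mainTheorem}(2) gives, with probability one, $u_n^*\converges{TL^1}u_B$, i.e. $u_n^*\circ T_n\converges{L^1(\nu)}u_B$, where $T_n\colon D\to\{\x_1,\dots,\x_n\}$ are transportation maps between $\nu$ and $\nu_n$ with $\|T_n-\id\|_{L^\infty(\nu)}\le C(\log n)^{p_d}/n^{1/d}$. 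Write $\tilde T_n$ for the nearest-point map, so that $u_n^{*V}=u_n^*\circ\tilde T_n$ $\nu$-a.e.\ by \eqref{VoronoiExtension}. First I would record, via the standard concentration bound for the covering radius of the sample together with Borel--Cantelli, that with probability one $\|\tilde T_n-\id\|_{L^\infty(\nu)}\le C(\log n)^{p_d}/n^{1/d}$ for all large $n$; since each $V_i^n$ is then contained in the ball of that radius about $\x_i$ and $\rho\le M$, this yields $\max_i\nu(V_i^n)\le C(\log n)^{d\cdot p_d}/n$. By the triangle inequality in $L^1(\nu)$,
\[
\|u_n^{*V}-u_B\|_{L^1(\nu)}\le\|u_n^*\circ\tilde T_n-u_n^*\circ T_n\|_{L^1(\nu)}+\|u_n^*\circ T_n-u_B\|_{L^1(\nu)},
\]
and since the last term tends to $0$, everything reduces to proving $\|u_n^*\circ\tilde T_n-u_n^*\circ T_n\|_{L^1(\nu)}\to0$.

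For this I would set $A_{ij}:=\{x\in D:\tilde T_n(x)=\x_i,\ T_n(x)=\x_j\}$ and $\gamma_n:=\|\tilde T_n-\id\|_\infty+\|T_n-\id\|_\infty$, note that $\|u_n^*\circ\tilde T_n-u_n^*\circ T_n\|_{L^1(\nu)}=\sum_{i,j}\nu(A_{ij})\,|u_n^*(\x_i)-u_n^*(\x_j)|$ with $\nu(A_{ij})=0$ unless $|\x_i-\x_j|\le\gamma_n\ll\veps_n$, and then use a chaining/averaging step. For such a pair $(i,j)$ and any vertex $\x_k$ with $|\x_i-\x_k|,|\x_j-\x_k|\le\veps_n/2$, the triangle inequality gives $|u_n^*(\x_i)-u_n^*(\x_j)|\le|u_n^*(\x_i)-u_n^*(\x_k)|+|u_n^*(\x_k)-u_n^*(\x_j)|$; averaging over all such $\x_k$ — of which there are at least $cn\veps_n^d$, with probability one for large $n$, thanks to the uniform lower bound $\nu_n(B(x,\veps_n/4))\ge c\veps_n^d$ for all $x\in D$ which holds because $\veps_n\gg(\log n)^{p_d}/n^{1/d}$ — and inserting the graph weights using $\eta\ge1$ on $[0,2]$ (assumption \eqref{ExtraAssump}) yields
\[
|u_n^*(\x_i)-u_n^*(\x_j)|\le\frac{1}{cn\veps_n^{d}}\sum_{k}\left(\eta\!\left(\frac{\x_i-\x_k}{\veps_n}\right)|u_n^*(\x_i)-u_n^*(\x_k)|+\eta\!\left(\frac{\x_j-\x_k}{\veps_n}\right)|u_n^*(\x_j)-u_n^*(\x_k)|\right).
\]
Multiplying by $\nu(A_{ij})$, summing, and using $\sum_j\nu(A_{ij})=\nu(V_i^n)$, $\sum_i\nu(A_{ij})=1/n$, and the identity $\sum_{i,k}\eta((\x_i-\x_k)/\veps_n)|u_n^*(\x_i)-u_n^*(\x_k)|=n^2\veps_n^{d+1}\,GTV_{n,\veps_n}(u_n^*)$ would give
\[
\|u_n^*\circ\tilde T_n-u_n^*\circ T_n\|_{L^1(\nu)}\le\frac{n\veps_n}{c}\Bigl(\max_i\nu(V_i^n)+\tfrac1n\Bigr)GTV_{n,\veps_n}(u_n^*)\le\frac{C(\log n)^{d\cdot p_d}\veps_n}{c}\,GTV_{n,\veps_n}(u_n^*).
\]
Finally, testing the minimization problem \eqref{ustar} against the constant function $0$ gives $\lambda_n GTV_{n,\veps_n}(u_n^*)\le R_{n,\lambda_n}(u_n^*)\le R_n(0)\le1$, hence $GTV_{n,\veps_n}(u_n^*)\le\lambda_n^{-1}$, so the right-hand side above is $\lesssim(\log n)^{d\cdot p_d}\veps_n/\lambda_n\to0$ by hypothesis. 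Intersecting the countably many almost-sure events used along the way completes the argument.

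I expect the comparison of the two "extension" maps $\tilde T_n$ and $T_n$ to be the main obstacle. A direct bound of $\|u_n^*\circ\tilde T_n-u_n^*\circ T_n\|_{L^1(\nu)}$ by $\veps_n\,GTV_{n,\veps_n}(u_n^*)$ fails because the Voronoi cells can carry $\nu$-mass as large as $\sim(\log n)^{d\cdot p_d}/n$ rather than $1/n$, and the chaining step is precisely what turns this defect into the extra factor $(\log n)^{d\cdot p_d}$ — which is exactly why the hypothesis on $\lambda_n$ must be strengthened from $\veps_n\ll\lambda_n$ (as in Theorem~\ref{mainTheorem}) to $(\log n)^{d\cdot p_d}\veps_n\ll\lambda_n$.
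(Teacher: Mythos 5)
Your argument is correct, and it shares the paper's overall skeleton: reduce to comparing $u_n^{*V}=u_n^*\circ \tilde T_n$ with $u_n^*\circ T_n$, and then exploit the three facts that also drive the paper's proof, namely $GTV_{n,\veps_n}(u_n^*)\leq 1/\lambda_n$ (the paper gets this in \eqref{Regularity} by testing against the constant $1$ rather than $0$), the Voronoi-cell mass bound $\max_j \nu(V_j^n)\leq C(\log n)^{d\cdot p_d}/n$ coming from \eqref{NiceTranspMaps}, and the conversion of proximity indicators into graph weights via \eqref{ExtraAssump}. Where you genuinely diverge is in how the key comparison is executed. The paper stays in the continuum: it replaces $u_n^T(x)$ by its average over $B(x,\tilde\veps_n)$ with respect to $\nu$ and splits the error into two integrals, $\mathcal I_n^1$ (oscillation of $u_n^T$ at scale $\veps_n$, of order $\veps_n/\lambda_n$) and $\mathcal I_n^2$ (handled with the two partitions $U_i^n=T_n^{-1}(\x_i)$ and $V_j^n$, of order $(\log n)^{d\cdot p_d}\veps_n/\lambda_n$). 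You instead work entirely on the point cloud: you write the $L^1(\nu)$ difference as $\sum_{i,j}\nu(A_{ij})\,|u_n^*(\x_i)-u_n^*(\x_j)|$ over the joint partition $A_{ij}=V_i^n\cap T_n^{-1}(\x_j)$ and chain through intermediate vertices $\x_k$, which plays the role of the paper's ball average; the asymmetric bookkeeping $\sum_j\nu(A_{ij})=\nu(V_i^n)$ versus $\sum_i\nu(A_{ij})=1/n$ produces exactly the same $(\log n)^{d\cdot p_d}\veps_n/\lambda_n$ rate, and your closing remark correctly identifies why the strengthened hypothesis on $\lambda_n$ is needed. The one step you assert rather than prove is the uniform empirical lower bound $\nu_n(B(x,\veps_n/4))\geq c\,\veps_n^d$ giving at least $c\,n\veps_n^d$ admissible chaining vertices; this does need a line, but it follows immediately from the transport maps, e.g. $\nu_n(B(\x_i,\veps_n/4))\geq \nu\bigl(B(\x_i,\veps_n/4-\lVert T_n-\id\rVert_{L^\infty(\nu)})\bigr)\geq c\,\veps_n^d$ for large $n$ by \eqref{sizeballs}, so it is not a gap. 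Note that the paper's averaging against $\nu$ (rather than $\nu_n$) is precisely what lets it avoid this extra empirical estimate; your discrete version is arguably more self-contained on the graph side but pays for it with this additional (easy) sampling lemma.
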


The bottom line is that, for $\left\{\lambda_n \right\}_{n \in \N}$ chosen appropriately, it is possible to construct an ``intrinsic'' classifier which converges towards the Bayes classifier $u_B$. This is constructed by first finding $u_n^*$ using convex optimization, and then by extending using the Voronoi partition.

\begin{remark}
In general, it is unknown whether convergence in $TL^1$ is equivalent to convergence of Voronoi extensions. The work here (e.g. the proof of Theorem \ref{mainTheorem2}) suggests that this is at least plausible under certain regularity conditions.
In any case, we do not seek to address the question of the convergence of the Voronoi extensions of $u_n^*$ without the hypotheses in Theorem \ref{mainTheorem2}.
\end{remark}

\subsection{Discussion and future work}

Our work establishes the consistency of the empirical risk minimization problem \eqref{eqn:intro1} by showing that with the right choice of scaling for  $\lambda_n$, the minimizer $u_n^*$ converges towards the Bayes 
classifier in the $TL^1$-sense. Although the function $u_n^*$ is only defined on the cloud $\{ \x_1, \dots, \x_n \}$, one may extend the function $u_n^*$ in a simple way to the whole ambient space 
so as to obtain a classifier that in the limit converges towards the desired Bayes classifier. 
We remark that we do not use the notion of VC dimension explicitly in our analysis given that we do not consider classes of functions defined on the ambient space 
as feasible elements in the empirical risk minimization problem. Instead, we work directly with the graph and its natural space of functions; in our analysis we exploit the level of regularity of 
minimizers of $R_{n,\lambda_n}$ (enforced by the graph total variation) and we use the $TL^1$ distance to compare the solutions of the discrete problem with the Bayes classifier. 

We suspect a close connection between \textit{regularity} of a solution of a discrete problem like the one considered in this paper and the VC dimension of a certain implicit family of functions. A natural setting in which to investigate notions of regularity (along with their connection to VC theory) would be in the linear setting in which one attempts to minimize an energy of the form
\[   E_{n, \lambda_n}(u_n):= \frac{\lambda_n}{ n^2 \veps^{d+2}}\sum_{i=1}^{n} \sum_{j=1}^{n}  \eta\left( \frac{\x_i - \x_j}{ \veps} \right)  ( u_n(\x_i) - u_n(\x_j) )^2 + \frac{1}{n } \sum_{i=1}^{n} ( u_n(\x_i) - \y_i ) ^2, \quad u_n \in L^2(\nu_n), \]
with the goal of approximating the Bayes regressor $u(x):= \mathbb{E}(\y | \x=x)$, where the variable $\y$ follows a law of the form
\[ \y \sim \Prob(\y \in dy | \x=x ).\]
The minimizer of the energy $E_{n, \lambda_n}$ can be found by solving a linear system of equations involving the graph Laplacian associated to the graph $(\{ \x_i \} , W)$, which can be interpreted as an elliptic PDE on the graph. Appropriate analogs of techniques from elliptic theory, such as Schauder estimates and convex analysis, might then be powerful tools for analysis. We anticipate that these tools will permit a finer analysis of the problem, including detailed estimates on rates of convergence. The development of these tools, as well as their application, is the subject of current investigation.

Finally, we notice that the setting that we have considered in this paper is that in which the support of the measure $\nu$ is an open domain $D \subseteq \R^d$. It is natural to consider the case in which the support of $\nu$ is actually a sub-manifold $\M$ embedded in $\R^d$. We believe that the consistency results presented in this paper can be extended to the sub-manifold setting in a relative straightforward way. In the interest of clarity we defer the details to a later work. In the linear problem described above, we anticipate that the desired rates of convergence will depend only on geometric quantities of $\M$ and not on the ambient space $\R^d$.

\subsection{Outline}
The rest of the paper is organized as follows. In Section \ref{Prelim} we present preliminary results that we use in the remainder of the paper. 
Specifically, in Subsection \ref{TLp} we present some relevant properties of the $TL^1$ space and its completion; in Subsection \ref{AuxResul} we present the main results from \cite{GarciaTrillos2015} together with some other auxiliary results that we use in the remainder of the paper.  
In Section \ref{SecMainTheorem} we prove Theorem \ref{mainTheorem}; we do this in three steps: in Subsection \ref{OverRegime} we consider the overfitting regime; in Subsection \ref{UnderRegime} we consider the underfitting regime and finally in Subsection \ref{GoodRegime} we consider the intermediate regime where one obtains convergence towards the Bayes classifier. 
Finally, in Section \ref{SecMainTheorem2} we establish Theorem \ref{mainTheorem2}. 
\section{Preliminaries}
\label{Prelim}

\subsection{The metric space $TL^1$}
\label{TLp}

This section states some important properties of the $TL^1$ space.

To begin, we demonstrate that $(TL^1(D), d_{TL^1})$ is a metric space. This is accomplished by identifying the set $TL^1(D)$  with a subset of a space of probability measures over $D \times \R$  and by identifying the metric $d_{TL^1}$ with the \textit{earth mover's distance} over such space of measures. 

In order to develop this idea, denote by $\mathcal{P}_1(\overline{D} \times \R)$ the set of Borel probability measures whose support is contained in $\overline{D} \times \R$ and that have finite first moments, that is $\bm{\theta} \in \mathcal{P}(\overline{D} \times \R)$ belongs to $ \mathcal{P}_1( \overline{D} \times \R)$ if 
\[  \int_{D \times \R} (|x| + |y|) d \bm{\theta}(x, y) < \infty.   \]
The \textit{earth mover's distance} between two elements $\bm{\theta}_1, \bm{\theta}_2 \in \mathcal{P}_1(\overline{D} \times \R)$ is defined by:
\[ d_{1}(\bm{\theta}_1, \bm{\theta}_2) :=   \inf_{\bm{\pi} \in \Gamma(\bm{\theta}_1, \bm{\theta}_2)} \iint_{(D\times\R) \times (D \times \R)} (|x_1-x_2| + |y_1-y_2| ) d\bm{\pi}(x_1,y_1,x_2,y_2).\]  

Now, given a measure $\theta \in \mathcal{P}(D)$ and a Borel map $\mathbb{T}: D \rightarrow D \times \R$, define the \textit{push forward} of $\theta$ by $\mathbb{T}$ as the measure 
$\mathbb{T}_{\sharp} \theta $ in $\mathcal{P}(D \times\R)$ defined by
\[ \mathbb{T}_{\sharp} \theta ( A \times I) =  \theta \left( \mathbb{T}^{-1}( A \times I ) \right), \quad \forall A \subset D \text{ Borel }, \quad \forall I \subset \R \text{ Borel }.    \]

With the previous definitions in hand, we may now identify elements in $TL^1(D)$ with probability measures in $\mathcal{P}_1(\overline{D}\times\R)$ using the map
\begin{equation}
(\theta, f) \in TL^1 \longmapsto (Id \times f)_\sharp \theta \in \mathcal{P}_1(\overline{D}\times\R), 
\label{embedding}
\end{equation}
where $Id \times f$ is the map $x \in D \mapsto (x,f(x)) \in D \times \R $. In other words, $(\theta, f)$ is identified with a measure supported on the graph of the function $f$. 
Notice that indeed $(Id \times f)_\sharp \theta$ has first integrable moments, due to the boundedness of the set $D$ and the fact that $f \in L^1(D, \theta)$. Furthermore, $d_{TL^1}( (\theta_1, f_1), (\theta_2,f_2)   ) = d_1( (Id \times f_1)_\sharp \theta_1, d_{1}((Id \times f)_\sharp \theta) )$ 
for any two elements $ (\theta_1, f_1), (\theta_2,f_2) \in TL^1(D)$ (see \cite{GarciaTrillos2015}). That is, the map \eqref{embedding} is an isometric embedding of $TL^1(D)$ into $\mathcal{P}_1(\overline{D} \times \R)$.  

A simple example suffices to demonstrate that $(TL^1(D), d_{TL^1})$ is not a complete metric space.
\begin{example}
Let $D = (0,1)$, $\theta$ be the Lebesgue measure and $f_{n+1} := \sign \sin(2^n \pi x)$ for $x \in (0,1)$. By constructing transport maps that swap neighboring regions valued at $\pm 1$, it can be shown that $d_{TL^1}((\theta,f_n), (\theta,f_{n+1})) \leq 1/2^n$. This implies that the sequence $\left\{ (\theta, f_n) \right\}_{n \in \N}$ is a Cauchy sequence in $(TL^1(D), d_{TL^1})$. 
However, if this was a convergent sequence it would have to converge to an element of the form $(\theta, f)$ (see Proposition \ref{EquivalenceTLp} below), but then, by Remark \ref{remarkTLpandLp}, it would be true that $f_n \converges{L^1(\theta)} f$.  This is impossible because $\left\{f_n \right\}_{n \in \N}$ is not a convergent sequence in $L^1(D,\theta)$.
\end{example}

The previous example illustrates the idea that highly oscillating functions (in this case the functions $f_n$) do not converge to any element of $TL^1(D)$. On the other hand, since $\{ (\theta, f_n)\}$ was a Cauchy sequence, it will converge in the completion of $TL^1(D)$. In fact, we can actually interpret the limit as a \textit{Young measure} or \textit{parametrized measure} (see \cite{EvansWeak,CalcVarLeoni,Pedregal2}). Young measures are a type of generalized function, which associate each point $x \in D$ with a probability measure $\eta_x$ over $\R$. In the example presented above, the Young measure obtained in the limit is $\eta_x = 1/2 \delta_{-1} + 1/2 \delta_1$. Young measures can naturally be associated with elements of $\mathcal{P}_1(\overline{D} \times \R)$. 
We claim that the space $(\mathcal{P}_1(\overline{D} \times \R), d_{1})$ is the completion of $TL^1(D)$. 
To see this, first note that $TL^1(D)$ can be embedded isometrically into $\mathcal{P}_1(\overline{D} \times \R)$. Second, note that $(\mathcal{P}_1(\overline{D} \times \R), d_{1})$ is a complete metric space (see \cite{ambrosio2008gradient}). 
Finally, it is shown in \cite{GarciaTrillos2015} that $TL^1(D)$ is dense in $\mathcal{P}_1( \overline{D} \times \R)$. From the previous facts the claim follows.

\nc


%
%
%

After discussing the $TL^1$-space and its completion, we state a useful characterization of $TL^1$-convergence. From this characterization, we see{\blue , } in particular, that the $TL^1$ 
convergence extends simultaneously the notion of (strong) convergence in $L^1$, and the notion of weak convergence (in fact, convergence in the earth mover's distance sense) of probability measures in $\mathcal{P}(D)$.

Let us first recall that given two measures $\theta_1 , \theta_2 \in \mathcal{P}(D)$, a Borel map $T: D \rightarrow D $ is a \textit{transportation map} between $\theta_1$ and $\theta_2$, if $\theta_2 = T _{\sharp } \theta_1$, where 
$T _{\sharp } \theta_1$ is the push forward of the measure $\theta_1$ by $T$. That is, $T$ is a transportation map between $\theta_1$ and $\theta_2$ if
\[  \theta_2(A) = \theta_1(T^{-1}(A)), \quad \text{ for all Borel } A \subseteq D.\]
A useful property of transportation maps is the change of variables formula:
\begin{equation}
\int_{D} f(T(x)) d \theta_1(x) = \int_{D}f(z)d \theta_2(z).    
\label{chvar}
\end{equation}
which holds for every Borel function $f: D \rightarrow \R$. This formula follows directly from the definition of transportation maps and an approximation procedure using simple functions.  

The following characterization can be found in \cite{GarciaTrillos2015}.

\begin{proposition}[Characterization of $TL^1$-convergence]
Let $(\theta,f) \in TL^1(D)$ and let
 $\left\{ \left(\theta_n , f_n \right) \right\}_{n \in \N}$ be a sequence in $TL^1(D)$. The following statements are equivalent:
\begin{enumerate}[(i)]
\item  $ \left(\theta_n , f_n \right)  \converges{TL^1} (\theta, f)$ as $n \rightarrow \infty$.
\item $\theta_n \overset{w}{\longrightarrow} \theta$ (to be read $\theta_n$ converges weakly towards $\theta$) and for every sequence of transportation plans $\left\{\pi_n \right\}_{n \in \N}$
(with $\pi_n \in \Gamma(\theta, \theta_n)$) satisfying
\begin{equation}
 \lim_{n \rightarrow \infty}\int \lvert x- y \rvert d \pi_n(x,y)=0 
\label{Stagplans}
\end{equation}
we have:
\begin{equation}
\iint_{D \times D} \left| f(x) - f_n(y) \right| d\pi_n(x,y) \rightarrow 0, \:  as \: n \rightarrow \infty.
\label{convergentTLp}
\end{equation} 
\item $\theta_n \overset{w}{\longrightarrow} \theta$ and there exists a sequence of transportation plans $\left\{\pi_n \right\}_{n \in \N}$ (with $\pi_n \in \Gamma(\theta, \theta_n)$) satisfying \eqref{Stagplans} for which \eqref{convergentTLp} holds.
\end{enumerate}
Moreover, if the measure $\theta$ is absolutely continuous with respect to the Lebesgue measure, the following are equivalent to the previous statements:

\begin{enumerate}[(i)]
  \setcounter{enumi}{3}
\item $\theta_n \overset{w}{\longrightarrow} \theta$ and for every sequence of transportation maps $\left\{ T_n \right\}_{n \in \N}$  (with ${T_n}_\sharp \theta = \theta_n$) satisfying
\begin{equation}
 \lim_{n \rightarrow \infty}\int \lvert T_n(x) - x  \rvert d \theta(x)=0 
\label{Stagmaps}
\end{equation}
we have
\begin{equation}
\int_{D} \left| f(x) - f_n\left(T_n(x)\right) \right| d\theta(x) \rightarrow 0, \:  as \: n \rightarrow \infty.
\label{convergentTLpMap}
\end{equation}
\item  $\theta_n \overset{w}{\longrightarrow} \theta$ and there exists a sequence of transportation maps $\left\{ T_n \right\}_{n \in \N}$ (with ${T_n}_\sharp \theta = \theta_n$) satisfying \eqref{Stagmaps} for which \eqref{convergentTLpMap} holds.
\end{enumerate}

\label{EquivalenceTLp}
\end{proposition}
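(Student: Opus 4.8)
The plan is to run the circle of implications (iii) $\Rightarrow$ (i) $\Rightarrow$ (iii), then (ii) $\Rightarrow$ (iii), and --- the heart of the matter --- (iii) $\Rightarrow$ (ii); once $\theta$ is assumed absolutely continuous with respect to Lebesgue measure I would close the loop with (ii) $\Rightarrow$ (iv) $\Rightarrow$ (v) $\Rightarrow$ (iii). Throughout I use that, since $D$ is bounded, weak convergence of probability measures on $D$ is equivalent to convergence in $W_1$ (and in $W_2$), and that $d_{TL^1}$ is the infimum appearing in \eqref{tlpmetric}, so that any admissible plan is a competitor.

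\textbf{The easy implications.} For (iii) $\Rightarrow$ (i): a stagnating plan $\pi_n$ satisfying \eqref{convergentTLp} is a competitor in the infimum defining $d_{TL^1}$, so $d_{TL^1}((\theta_n,f_n),(\theta,f)) \le \iint_{D\times D}(|x-y| + |f(x)-f_n(y)|)\,d\pi_n \to 0$. For (i) $\Rightarrow$ (iii): I would take $\pi_n \in \Gamma(\theta,\theta_n)$ to be, say, $1/n$-optimal for $d_{TL^1}$; then \eqref{Stagplans} and \eqref{convergentTLp} both hold along $\pi_n$, and $W_1(\theta,\theta_n) \le \iint|x-y|\,d\pi_n \to 0$ forces $\theta_n \overset{w}{\to} \theta$. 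For (ii) $\Rightarrow$ (iii): weak convergence gives $W_1(\theta_n,\theta)\to 0$, so a $W_1$-optimal plan is stagnating, and (ii) supplies \eqref{convergentTLp} along it; hence (iii).

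\textbf{The main implication (iii) $\Rightarrow$ (ii).} This is where the real work lies. I am given $\theta_n \overset{w}{\to} \theta$ together with one stagnating plan $\pi_n^*$ for which \eqref{convergentTLp} holds, and I must upgrade this to ``every stagnating plan is good''. The key is that the target $f$ is fixed: given $\delta>0$, choose a bounded $L$-Lipschitz function $g$ with $\|f-g\|_{L^1(\theta)}\le\delta$ (bounded Lipschitz functions are dense in $L^1(\theta)$ for a Borel probability measure on the bounded set $D$). For an arbitrary stagnating plan $\pi_n$, the split $|f(x)-f_n(y)| \le |f(x)-g(x)| + |g(x)-g(y)| + |g(y)-f_n(y)|$ gives, after integrating against $\pi_n$, the three pieces $\|f-g\|_{L^1(\theta)}\le\delta$, then $L\iint|x-y|\,d\pi_n\to 0$, and $\int_D|g-f_n|\,d\theta_n$ (the last because $\theta_n$ is the second marginal of $\pi_n$). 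The final piece I control through the \emph{known} good plan: $\int_D|g-f_n|\,d\theta_n = \iint|g(y)-f_n(y)|\,d\pi_n^* \le L\iint|x-y|\,d\pi_n^* + \|f-g\|_{L^1(\theta)} + \iint|f(x)-f_n(y)|\,d\pi_n^*$, whose outer terms vanish as $n\to\infty$ and whose middle term is $\le\delta$. Hence $\limsup_n\iint|f(x)-f_n(y)|\,d\pi_n\le 2\delta$, and letting $\delta\downarrow 0$ finishes it. I expect this passage from ``some good plan exists'' to ``all stagnating plans are good'' to be the only genuine obstacle; the trick that resolves it is the Lipschitz approximation of the \emph{fixed} limit $f$ together with the bootstrap through $\pi_n^*$.

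\textbf{The map statements (assuming $\theta\ll$ Leb).} A transport map $T_n$ with ${T_n}_\sharp\theta=\theta_n$ induces the plan $\pi_n := (\mathrm{Id}\times T_n)_\sharp\theta\in\Gamma(\theta,\theta_n)$, under which \eqref{Stagmaps} becomes \eqref{Stagplans} and \eqref{convergentTLpMap} becomes \eqref{convergentTLp}; so (v) $\Rightarrow$ (iii) is immediate, and (ii) $\Rightarrow$ (iv) holds because a map-induced stagnating plan is in particular a stagnating plan. For (iv) $\Rightarrow$ (v) it remains to exhibit one stagnating map: since $\theta\ll$ Leb there is a quadratic-cost (Brenier) optimal map $T_n$ with ${T_n}_\sharp\theta=\theta_n$, and $\int_D|T_n-\mathrm{Id}|\,d\theta \le W_2(\theta,\theta_n)\to 0$ because $W_2$-convergence and weak convergence coincide on the bounded domain $D$ (see \cite{ambrosio2008gradient}); applying (iv) along these maps yields (v). This closes all the equivalences, and follows the argument of \cite{GarciaTrillos2015}.
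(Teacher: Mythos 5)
Your argument is correct, and it is worth noting that this paper does not actually prove Proposition \ref{EquivalenceTLp} at all: it is quoted from \cite{GarciaTrillos2015}, so the relevant comparison is with the proof given there. Your route differs in two respects. First, for the upgrade from ``one good stagnating plan'' to ``every stagnating plan'' (your (iii) $\Rightarrow$ (ii)), you exploit that the limit $f$ is fixed: approximate it by a bounded Lipschitz $g$ in $L^1(\theta)$, split $|f(x)-f_n(y)|$ through $g(x)$ and $g(y)$, and bootstrap the term $\int_D |g-f_n|\,d\theta_n$ back through the known good plan $\pi_n^*$; this is a clean, self-contained replacement for the gluing/composition-of-plans arguments used in \cite{GarciaTrillos2015}, and the quantifier bookkeeping ($\theta_n$ being the common second marginal of $\pi_n$ and $\pi_n^*$) is handled correctly. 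Second, for the map statements you avoid the delicate step of approximating arbitrary stagnating plans by maps: you only ever need to \emph{exhibit} one stagnating sequence of maps, and Brenier's theorem (applicable since $\theta \ll$ Lebesgue and $D$ is bounded, so second moments are finite) supplies quadratic-cost optimal maps with $\int_D |T_n - \mathrm{Id}|\,d\theta \le W_2(\theta,\theta_n) \to 0$ by Cauchy--Schwarz and the equivalence of $W_2$-convergence with weak convergence on bounded domains \cite{ambrosio2008gradient}; combined with the observation that map-induced plans $(\mathrm{Id}\times T_n)_\sharp\theta$ are stagnating plans, the chain (ii) $\Rightarrow$ (iv) $\Rightarrow$ (v) $\Rightarrow$ (iii) closes all equivalences with the correct quantifier structure. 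The small ingredients you invoke (density of bounded Lipschitz functions in $L^1(\theta)$, existence of $W_1$-optimal plans, $W_1$ metrizing weak convergence on the bounded set $D$) are all standard and used legitimately, so the proposal stands as a complete and somewhat more elementary proof than the one in the cited reference.
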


The previous result allows us to abuse notation and talk about convergence of functions in $TL^1$ without having to specify the measures they are associated to. More precisely, suppose that the sequence
$\left\{ \theta_n  \right\}_{n \in \N} $ in $\mathcal{P}(D)$ converges weakly to $\theta \in \mathcal{P}(D)$. We say that the sequence $\left\{ u_n\right\}_{n \in \N}$ (with $u_n \in L^1(\theta_n)$) converges in the $TL^1$ sense to $u \in L^1(\theta)$, if $\left\{  \left(\theta_n , u_n  \right) \right\}_{n \in \N}$ converges to $(\theta, u)$ in the $TL^1$ metric space.
In this case we write $u_n \overset{{TL^1}}{\longrightarrow} u$ as $n \rightarrow \infty$. Also, we say that the sequence $\left\{ u_n\right\}_{n \in \N}$ (with $u_n \in L^1(\theta_n)$) is relatively compact in $TL^1$ if the sequence $\left\{   \left(\theta_n , u_n  \right)\right\}_{n \in \N}$ is relatively compact in $TL^1$.
In the remainder of the paper, we use the previous proposition and observation as follows: we let $\theta_n = \nu_n$ (the empirical measure associated to the samples from the measure $\nu$) and let $\theta= \nu$; 
we know that with probability one $\nu_n \converges{w} \nu$. We also know that with probability one, the maps from \eqref{NiceTranspMaps} exist and so for a sequence of functions $ \left\{ u_n \right\}_{n \in \N}$ with $u_n \in L^1(\nu_n)$,
we can say $u_n \converges{TL^1} u$ for $u \in L^1(\nu)$ if and only if $u_n\circ T_n \converges{L^1(\nu)} u$. Notice that this was the characterization used right before stating Theorem \ref{mainTheorem}.

\begin{remark}
\label{remarkTLpandLp}
We finish this section by noticing that from Proposition \ref{EquivalenceTLp}, we can think of the convergence in $TL^1$ as a generalization of weak convergence of measures and of $L^1$ convergence  of functions. 
That is $\left\{ \theta_n \right\}_{n \in \N}$ in $\mathcal{P}(D)$ converges weakly to $\theta\in \mathcal{P}(D)$ if and only if $ \left(\theta_n , 1 \right) \overset{{TL^1}}{\longrightarrow} (\theta, 1)$  as $n \rightarrow \infty$;
 and that for fixed $\theta \in \mathcal{P}(D)$ a sequence $\left\{ f_n \right\}_{n\in \N}$ in $L^1(\theta)$ converges in $L^1(\theta)$ to $f$ if and only if $(\theta, f_n) \overset{{TL^1}}{\longrightarrow} (\theta,f)$ as $n \rightarrow \infty$.
\end{remark}

\subsection{Auxiliary properties and results}
\label{AuxResul}

We now present the following additional properties that, as we will see, prove to be useful when establishing the main results of the paper. 

Given a sequence $\left\{ u_n \right\}_{n \in \N}$ with $u_n \in L^1(\nu_n)$, we say that $\left\{ u_n \right\}_{n \in \N}$ \textit{converges weakly} to $u \in L^1(\nu)$ (and denote this convergence by $u_n \rightharpoonup u$) if the sequence of functions $\left\{ u_n \circ T_n \right\}_{n \in \N}$ converges weakly to $u$; the maps $T_n$ are as in \eqref{NiceTranspMaps}. 
We recall that the statement ``$u_n\circ T_n$ converges weakly to $u$ (in $L^1(\nu)$)'', means that for every $f \in L^\infty(\nu)$, it is true that
\[  \lim_{n \rightarrow \infty} \int_{D} u_n \circ T_n(x) f(x) d \nu(x) = \int_{D} u(x) f(x) d \nu(x).  \]
\begin{remark}
We remark that the notion of weak convergence mentioned previously is not the same as the notion of weak convergence for measures. 
See \cite{CalcVarLeoni} for more on weak convergence in $L^1(\nu)$. Although we use \textit{weak convergence} for convergence of functions and convergence of measures, 
there should be no confusion as to what is the meaning we give to weak convergence in every specific context. 
\end{remark}

Our first simple observation concerns the weak limit of the sequence of functions $\left\{ l_n \right\}_{n \in \N}$.

\begin{lemma}
With probability one, $l_n \rightharpoonup \mu$, where $l_n$ is defined in \eqref{labelfunc} and $\mu$ is defined in \eqref{mu}.
\label{LemmaWeakLabels}
\end{lemma}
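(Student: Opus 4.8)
The plan is to test weak convergence of $l_n \circ T_n$ against an arbitrary fixed test function $f \in L^\infty(\nu)$, and to reduce the claim to a law of large numbers for a suitable array of random variables. Write $A_n := \int_D (l_n \circ T_n)(x) f(x)\, d\nu(x)$. Since $T_n$ is a transportation map between $\nu$ and $\nu_n$ with $\nu(T_n^{-1}(\{\x_i\})) = 1/n$, the change of variables formula \eqref{chvar} is not directly applicable to $l_n \circ T_n$ against $f$ (because $f$ is evaluated at $x$, not at $T_n(x)$), so the first step is to replace $f(x)$ by $f(T_n(x))$ at the cost of an error controlled by the modulus of continuity of $f$ — but $f$ is only $L^\infty$, so instead I would first approximate $f$ by a continuous (even Lipschitz) function $\tilde f$ in $L^1(\nu)$, handle $\tilde f$, and then let the approximation error go to zero using $\|l_n\circ T_n\|_{L^\infty}\le 1$. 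For continuous $\tilde f$, the bound $\|T_n - \id\|_{L^\infty(\nu)} \le C_\beta (\log n)^{p_d} n^{-1/d} \to 0$ from \eqref{NiceTranspMaps} gives $|\tilde f(T_n(x)) - \tilde f(x)| \to 0$ uniformly, so
\[
\int_D (l_n\circ T_n)(x)\,\tilde f(x)\, d\nu(x) = \int_D (l_n\circ T_n)(x)\,\tilde f(T_n(x))\, d\nu(x) + o(1) = \frac{1}{n}\sum_{i=1}^n \y_i\, \tilde f(\x_i) + o(1),
\]
where the last equality uses \eqref{chvar} with $T_n{}_\sharp \nu = \nu_n$, together with $(l_n \circ T_n)(x) \tilde f(T_n(x)) = (l_n \cdot \tilde f)(T_n(x))$ and $l_n(\x_i) = \y_i$.

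The second step is a probabilistic one: show that with probability one, $\frac{1}{n}\sum_{i=1}^n \y_i \tilde f(\x_i) \to \int_D \mu(x)\tilde f(x)\, d\nu(x)$. The variables $Z_i := \y_i \tilde f(\x_i)$ are i.i.d., bounded (by $\|\tilde f\|_\infty$), with
\[
\mathbb{E}[Z_i] = \mathbb{E}\big[\tilde f(\x_i)\,\mathbb{E}[\y_i \mid \x_i]\big] = \int_D \tilde f(x)\mu(x)\, d\nu(x),
\]
using the disintegration of $\bnu$ and the definition \eqref{mu} of $\mu$. Hence the strong law of large numbers gives almost sure convergence for this fixed $\tilde f$. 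Combining with step one, $A_n \to \int_D \mu \tilde f\, d\nu$ a.s., and then sending $\tilde f \to f$ in $L^1(\nu)$ (using $\|\mu\|_\infty \le 1$ on the limit side and $\|l_n \circ T_n\|_\infty \le 1$ on the approximant side) yields $A_n \to \int_D \mu f\, d\nu$ a.s. for each fixed $f$.

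The one genuinely delicate point is the order of quantifiers: the strong law gives, for each fixed $f$ (or $\tilde f$), a full-measure event; weak convergence requires a single full-measure event that works simultaneously for all $f \in L^\infty(\nu)$. The fix is standard: $L^1(\nu)$ is separable, so choose a countable dense family $\{\tilde f_k\}_{k\in\N}$ of Lipschitz functions in $L^1(\nu)$; intersect the countably many full-measure events (one per $\tilde f_k$), and also intersect with the full-measure events on which $\nu_n \xrightarrow{w}\nu$ and the maps \eqref{NiceTranspMaps} exist for all large $n$. On this single full-measure event, the argument above gives convergence for every $\tilde f_k$, and the uniform bound $\|l_n \circ T_n\|_{L^\infty(\nu)} \le 1$ propagates this to all of $L^\infty(\nu)$ by density (for $f \in L^\infty(\nu) \subseteq L^1(\nu)$, approximate in $L^1$). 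This density/uniform-boundedness interchange is the main obstacle to write carefully; everything else is a routine combination of \eqref{NiceTranspMaps}, \eqref{chvar}, and the SLLN.
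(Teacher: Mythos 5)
Your proposal is correct, and it reaches the conclusion by a route that differs from the paper's in how the quantifier over test functions is organized. The computational core is identical in both arguments: split $f(x)=\bigl(f(x)-f(T_n(x))\bigr)+f(T_n(x))$, control the first piece by $\lVert T_n-\id\rVert_{L^\infty(\nu)}\to 0$ together with $|l_n|\le 1$, and use the change of variables \eqref{chvar} to turn the second piece into $\tfrac1n\sum_i \y_i f(\x_i)$. Where you diverge is in the remaining two ingredients. The paper first invokes the Dunford--Pettis theorem to extract a weak-$L^1(\nu)$ limit $g$ of $l_n\circ T_n$ along subsequences, so it only needs to identify $g$ by testing against $f\in C_c^\infty(D)$, and it evaluates $\lim_n \tfrac1n\sum_i f(\x_i)\y_i$ by citing the almost sure weak convergence $\bm{\nu}_n\converges{w}\bm{\nu}$ of the empirical measures; that single full-measure event handles all continuous test functions simultaneously, so no countable dense family appears explicitly. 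You instead prove the defining statement $\int (l_n\circ T_n) f\,d\nu\to\int \mu f\,d\nu$ directly for every $f\in L^\infty(\nu)$: the strong law of large numbers applied to $Z_i=\y_i\tilde f(\x_i)$ (with $\mathbb{E}[Z_i]=\int\tilde f\mu\,d\nu$ via disintegration) gives the limit for each Lipschitz $\tilde f$, a countable dense family in $L^1(\nu)$ plus intersection of the corresponding full-measure events resolves the order-of-quantifiers issue you correctly flag, and the uniform bounds $\lVert l_n\circ T_n\rVert_\infty\le 1$, $\lVert\mu\rVert_\infty\le 1$ let you pass from the dense family to all of $L^\infty(\nu)$. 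Your version is more self-contained (no Dunford--Pettis, and the only probabilistic input is the SLLN rather than a.s. weak convergence of $\bm{\nu}_n$) at the price of the density bookkeeping; the paper's version is shorter because the compactness-plus-identification scheme and the cited empirical-measure convergence absorb exactly that bookkeeping. Both are complete proofs of the lemma.
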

\begin{proof}
First recall that with probability one, the empirical measures $\bm{\nu}_n$ converge weakly to the probability measure $\bm{\nu}$ (see \cite{Billingsley}). Secondly, we know that with probability one, the maps $\left\{ T_n \right\}_{n \in \N} $ from \eqref{NiceTranspMaps} exist. We work on a set with probability one where both $\bm{\nu}_n \converges{w} \bm{\nu}$ 
and the transportation maps $T_n$ from $\eqref{NiceTranspMaps}$ exist.

Now, because $|l_n| \leq 1$, by the Dunford-Pettis theorem (see for example \cite{CalcVarLeoni}) the sequence $\left\{ l_n \circ T_n \right\}_{n \in \N}$ is weakly sequentially pre-compact, that is, every subsequence of $\left\{ l_n \circ T_n \right\}$ has a further subsequence which converges weakly. 
Because of this, we may without the loss of generality assume that the sequence $\left\{ l_n \circ T_n \right\}_{n \in \N}$ converges weakly to some $g \in L^1(\nu)$. Our goal is to show that $g= \mu$. 

Let $f \in C_c^\infty(D)$. Then,
\[ \int_{D} l_n \circ T_n(x) f(x) d \nu(x) =  \int_{D} l_n \circ T_n(x)( f(x) - f(T_n(x) ) d \nu(x)  + \int_{D} l_n \circ T_n(x) f(T_n(x)) d \nu(x).\]
Observe that, again because $|l_n| \leq 1$,
\[  \left | \int_{D} l_n \circ T_n(x)( f(x) - f(T_n(x) ) d \nu(x) \right| \leq  \lVert \nabla f  \rVert_{L^\infty(\nu)} \cdot \int_{D} | x - T_n(x)| d \nu(x) \rightarrow 0 , \quad \text{ as } n \rightarrow \infty.\]
Hence,
\[  \int_{D} g(x) f(x) d \nu(x) = \lim_{n \rightarrow \infty}  \int_{D} l_n \circ T_n(x) f(x) d \nu(x) = \lim_{n \rightarrow \infty} \int_{D} l_n \circ T_n(x) f(T_n(x)) d \nu(x).  \]

Using the change of variables formula \eqref{chvar}, and using the fact that $\bm{\nu}_n$ converges to $\bm{\nu}$ weakly, it follows that
\[  \int_{D} g(x) f(x) d \nu(x) = \lim_{n \rightarrow \infty} \frac{1}{n} \sum_{i=1}^{n} f(\x_i)\y_i = \int_{D \times \R} f(x) y d \bm{\nu}(x,y) = \int_{D} \mu(x) f(x) d \nu(x).\]

Since the above formula is true for every $f \in C_c^\infty(D)$, we conclude that $g= \mu$.
\end{proof}

We now determine the ``strong'' limit of the functions $l_n$. Indeed, we show that the functions $l_n$ converge towards the measure $\bm{\nu}$ in the completion of $TL^1(D)$.
In particular, this shows that $l_n$ does not converge to any function $u \in L^1(\nu)$ in the $TL^1$-sense.

\begin{lemma}
With probability one, 
\[ (\nu_n ,  l_n) \converges{d_1} \bm{\nu}, \text{ as } n \rightarrow \infty.\] 
In the above we should interpret $(\nu_n, l_n)$ as a measure in $\mathcal{P}_1(\overline{D} \times \R)$ according to the identification \eqref{embedding} and $d_1$ is the earth mover's distance in $\mathcal{P}_1(\overline{D} \times \R)$.
\label{lnNoConv}
\end{lemma}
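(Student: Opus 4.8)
The plan is to recognize $(\nu_n,l_n)$ as the joint empirical measure and then appeal to the classical almost sure convergence of empirical measures in the earth mover's distance. Under the isometric embedding \eqref{embedding}, the pair $(\nu_n,l_n)$ is identified with $(\id\times l_n)_\sharp\nu_n\in\mathcal P_1(\overline D\times\R)$, and since $l_n(\x_i)=\y_i$ one checks directly that for any Borel sets $A\subseteq D$, $I\subseteq\R$,
\[ (\id\times l_n)_\sharp\nu_n(A\times I)=\nu_n\bigl(\{x: x\in A,\ l_n(x)\in I\}\bigr)=\frac1n\#\{i: \x_i\in A,\ \y_i\in I\}=\bnu_n(A\times I). \]
Hence, as a measure, $(\nu_n,l_n)$ equals the empirical measure $\bnu_n$, and the lemma is equivalent to the statement that $d_1(\bnu_n,\bnu)\to 0$ with probability one.

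I would then finish as follows. By Varadarajan's theorem (the strong law of large numbers for empirical measures, as already invoked in the proof of Lemma~\ref{LemmaWeakLabels}; see \cite{Billingsley}), there is an event of full probability on which $\bnu_n$ converges weakly to $\bnu$. On that event it remains to upgrade weak convergence to $d_1$-convergence. This is a standard fact: every $\bnu_n$, as well as $\bnu$, is supported in the compact set $\overline D\times\{0,1\}$, and on probability measures supported in a fixed bounded set the earth mover's distance metrizes weak convergence --- equivalently, $d_1$-convergence is the conjunction of weak convergence and convergence of first moments, the latter being automatic here because the map $(x,y)\mapsto |x|+|y|$ is bounded and continuous on $\overline D\times\{0,1\}$ (see \cite{ambrosio2008gradient}, and \cite{GarciaTrillos2015}). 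Therefore $d_1(\bnu_n,\bnu)\to 0$ on this full-probability event, which is the assertion.

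I do not anticipate any genuine analytic difficulty; the only points needing care are the bookkeeping in the identification above and the use of an almost sure (not merely in-probability) form of the empirical-measure convergence. If one prefers a more self-contained route, the convergence $d_1(\bnu_n,\bnu)\to 0$ can also be established directly: partition $D$ into finitely many small blocks, and within each block transport the mass of $\bnu$ onto the atoms of $\bnu_n$ whose feature coordinate lies in that block, matching masses according to the label value $0$ or $1$; for a fixed partition the per-block mass mismatch tends to $0$ by the law of large numbers while the feature displacement within a block is at most its diameter, and one concludes by refining the partition. This is essentially the textbook proof of $W_1$-convergence of empirical measures on a compact set, so the soft argument above is preferable. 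Finally, when $\bnu$ is a genuine Young measure --- i.e.\ when $\mu$ is not $\{0,1\}$-valued $\nu$-a.e., so that $\bnu$ is not of the form $(\id\times u)_\sharp\nu$ for any $u\in L^1(\nu)$ --- this lemma together with Proposition~\ref{EquivalenceTLp} shows that $l_n$ does not converge in $TL^1$ to any function.
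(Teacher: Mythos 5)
Your proof is correct and follows essentially the same route as the paper: identify $(\nu_n,l_n)$ via \eqref{embedding} with $(Id\times l_n)_\sharp\nu_n=\bnu_n$, invoke almost sure weak convergence of $\bnu_n$ to $\bnu$, and upgrade to $d_1$-convergence using the uniformly bounded support (hence uniformly integrable first moments), citing \cite{ambrosio2008gradient}. The extra direct block-partition construction you sketch is not needed, and the paper does not include it.
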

\begin{proof}
The result follows from the following simple observations. First, $(Id \times l_n)_{\sharp} \nu_n $ is nothing but $\bm{\nu}_n$. On the other hand, with probability one $\bm{\nu}_n \converges{w} \bm{\nu}$. 
Finally, since the measures $\left\{ \bm{\nu}_n \right\}_{n \in \N}$ have support contained in $\overline{D}\times [0,1]$ (a bounded subset of $\R^d \times \R$), 
we conclude that they have uniformly integrable first moments, and hence $\bm{\nu}_{n} \converges{w} \bm{\nu}$, implies that $\bm{\nu}_{n} \converges{d_1} \bm{\nu}$ (see Chapter 7 in \cite{ambrosio2008gradient}).
\end{proof}

The next observation that we will use in the remainder, concerns the continuity of the risk functionals $R_n$ in the $TL^1$-sense. 

\begin{proposition}[Continuity of risk functional in the $TL^1$-sense]
With probability one the following statement holds: Let $\left\{ u_n \right\}_{n \in \N}$ be a sequence of $[0,1]$-valued functions, with $u_n \in L^1(\nu_n)$. If $u_n  \converges{TL^1} u$ as $n \rightarrow \infty$, then
\[  \lim_{n \rightarrow \infty} R_n(u_n) = R(u). \]
\label{contRisk}
\end{proposition}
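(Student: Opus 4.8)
The plan is to split the risk functional into the fidelity distance to the label function and then exploit the two facts already established about $l_n$: that $l_n \rightharpoonup \mu$ weakly (Lemma \ref{LemmaWeakLabels}) and that $(\nu_n, l_n) \converges{d_1} \bnu$ in the completion (Lemma \ref{lnNoConv}). Concretely, write $R_n(u_n) = \int_D |u_n - l_n|\, d\nu_n$ and $R(u) = \int_D\big( u(x)(1-\mu(x)) + (1-u(x))\mu(x)\big) d\nu(x)$, using that $u$ and $u_n$ are $[0,1]$-valued so that $\int |u-y|\,d\bnu_x(y) = u(1-\mu) + (1-u)\mu$. The strategy is to show $R_n(u_n) \to R(u)$ by passing to the $TL^1$ description: fix the transportation maps $T_n$ from \eqref{NiceTranspMaps} (which exist with probability one), so that $u_n \circ T_n \converges{L^1(\nu)} u$. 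Then
\[
R_n(u_n) = \int_D |u_n(x) - l_n(x)|\, d\nu_n(x) = \int_D |u_n\circ T_n(x) - l_n\circ T_n(x)|\, d\nu(x)
\]
by the change of variables formula \eqref{chvar}. So it suffices to analyze the $L^1(\nu)$-level expression $\int_D |(u_n\circ T_n)(x) - (l_n\circ T_n)(x)|\, d\nu(x)$.

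First I would handle the $u_n$-part: since $u_n\circ T_n \to u$ in $L^1(\nu)$ and all functions are bounded by $1$, replacing $u_n\circ T_n$ by $u$ changes the integral by at most $\|u_n\circ T_n - u\|_{L^1(\nu)} \to 0$. Thus it remains to show
\[
\int_D |u(x) - (l_n\circ T_n)(x)|\, d\nu(x) \longrightarrow \int_D\big( u(x)(1-\mu(x)) + (1-u(x))\mu(x)\big)\, d\nu(x).
\]
Since $u$ is $[0,1]$-valued and $l_n\circ T_n$ is $\{0,1\}$-valued, $|u - l_n\circ T_n| = u(1 - l_n\circ T_n) + (1-u)(l_n\circ T_n) = u + (1-2u)(l_n\circ T_n)$ pointwise. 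Integrating, $\int_D |u - l_n\circ T_n|\,d\nu = \int_D u\,d\nu + \int_D (1-2u)(l_n\circ T_n)\,d\nu$. Now $(1-2u) \in L^\infty(\nu)$, and by Lemma \ref{LemmaWeakLabels} we have $l_n\circ T_n \rightharpoonup \mu$ weakly in $L^1(\nu)$, so $\int_D (1-2u)(l_n\circ T_n)\,d\nu \to \int_D (1-2u)\mu\,d\nu$. Hence the limit equals $\int_D u\,d\nu + \int_D(1-2u)\mu\,d\nu = \int_D\big(u(1-\mu) + (1-u)\mu\big)\,d\nu = R(u)$, which closes the argument.

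The one subtlety — and the place I expect the only real care is needed — is the interplay of the two probability-one events and the weak-convergence testing class. Lemma \ref{LemmaWeakLabels} is stated as weak convergence $l_n \rightharpoonup \mu$, which by definition means $l_n\circ T_n \rightharpoonup \mu$ in $L^1(\nu)$ against all $L^\infty(\nu)$ test functions; one must make sure the $T_n$ used there can be taken to be the same maps as in the $TL^1$ hypothesis on $u_n$ (both come from \eqref{NiceTranspMaps}, so on the common probability-one set this is fine). A secondary point: the Dunford–Pettis / uniform integrability step is already absorbed into Lemma \ref{LemmaWeakLabels}, so no additional compactness argument is needed here; the boundedness of all functions by $1$ makes every term well-defined and the interchange of limits legitimate. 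Everything else is the elementary pointwise identity for $|a-b|$ with $a\in[0,1]$, $b\in\{0,1\}$ and the change-of-variables formula \eqref{chvar}.
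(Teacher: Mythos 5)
Your proposal is correct and follows essentially the same route as the paper: the same pointwise decomposition $|u-l| = u + (1-2u)l$ for $u\in[0,1]$, $l\in\{0,1\}$, combined with the strong $TL^1$ convergence of $u_n$ and the weak convergence $l_n \rightharpoonup \mu$ from Lemma \ref{LemmaWeakLabels}. The only difference is cosmetic: you first pass to the $\nu$-level via \eqref{chvar} and swap $u_n\circ T_n$ for $u$ before invoking weak convergence against the fixed test function $(1-2u)$, which merely makes explicit the strong-times-weak limit the paper takes in one step at the $\nu_n$-level.
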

\begin{proof}
Because $u_n $ takes values in $[0,1]$ and $l_n$ takes values in $\left\{0,1 \right\}$, we can write
\[ R_n(u_n) = \int_{D} u_n(1-l_n) d \nu_n + \int_{D}(1- u_n) l_n d \nu_n  = \int_{D} u_n d \nu_n + \int_{D}(1 - 2 u_n ) l_n d \nu_n.    \]
Hence,
\[ \lim_{n \rightarrow \infty} R_n(u_n) = \lim_{n \rightarrow \infty} \int_{D} u_n d \nu_n + \lim_{n \rightarrow \infty} \int_{D}(1 - 2 u_n ) l_n d \nu_n = \int_{D} u d \nu + \int_{D} (1 - 2 u) \mu d\nu,   \]
noticing that in the last equality we used the fact that $u_n \converges{TL^1} u$,  $l_n \rightharpoonup \mu$, $|l_n| \leq 1$, $|u| \leq 1$, and Lemma \ref{LemmaWeakLabels}. Finally, observe that the function $u$ must take values in $[0,1]$ and thus the last expression in the above formula can be 
rewritten as $R(u)$. This concludes the proof.

\end{proof}

To finish this section, we present the main results from \cite{GarciaTrillos2015} which state that under the same assumptions on $\left\{ \veps_n \right\}_{n \in \N}$ in Theorem \ref{mainTheorem}, the functional $\sigma_\eta TV$ is the 
$\Gamma$-limit of the functionals $GTV_{n, \veps_n}$ in the $TL^1$-sense. This result will be useful when proving Theorem \ref{mainTheorem} in the regime $\lambda_n \rightarrow \lambda \in (0,\infty]$.

\begin{theorem}[Theorem 1.1, Theorem 1.2 and Corollary 1.3 in \cite{GarciaTrillos2015}]
Let the domain $D$,  measure $\nu$,  kernel $\eta$,  sequence $\{\veps_n\}_{n \in \N}$,  sample points $\{\x_i\}_{i \in \N}$, be as in the statement of Theorem \ref{mainTheorem}. 
Then, with probability one all of the following statements hold simultaneously:
\begin{itemize}
\item \textbf{Liminf inequality:} For every function $u \in L^1(\nu) $ and for every sequence $\left\{ u_n \right\}_{n \in \N}$ with $u_n \converges{TL^1} u$, we have that
\[ \sigma_\eta TV(u) \leq \liminf_{n \rightarrow \infty} GTV_{n,\veps_n}(u_n) .\]
\item \textbf{Limsup Inequality:} For every function $u \in L^1(\nu)$ there exists a sequence $\left\{ u_n \right\}_{n \in \N}$ with $u_n \converges{TL^1} u$, such that
\[  \limsup_{n \rightarrow \infty} GTV_{n,\veps_n}(u_n) \leq \sigma_\eta TV(u) .\]
\item \textbf{Compactness:} Every sequence $\left\{ u_n \right\}_{n \in \N}$ satisfying
\[  \sup_{n \in \N} GTV_{n, \veps_n}(u_n) < +\infty,  \]
is pre-compact in $TL^1$.
\end{itemize}
Moreover, if $u \in L^1(\nu)$ takes only values in $\{0,1 \}$, then in the limsup inequality above, one may choose the functions $u_n \in L^1(\nu_n)$ to take values in $\{0,1 \}$ as well.
\label{ContTV}
\end{theorem}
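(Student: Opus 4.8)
My plan is to prove the statement by inserting, between the graph functional $GTV_{n,\veps_n}$ — which lives on the random empirical measure $\nu_n$ — and the local functional $\sigma_\eta\,TV$ — which lives on $\nu$ — an intermediate \emph{nonlocal continuum} functional on $\nu$: for $\veps>0$ and $v\in L^1(\nu)$ set
\[
 E_\veps(v):=\frac{1}{\veps^{d+1}}\iint_{D\times D}\eta\!\left(\frac{x-y}{\veps}\right)\,|v(x)-v(y)|\,\rho(x)\,\rho(y)\,dx\,dy .
\]
The proof then splits into two essentially independent halves: a \emph{deterministic} part, proving that $E_\veps$ $\Gamma$-converges to $\sigma_\eta\,TV$ in $L^1(\nu)$ as $\veps\to0$ together with the matching compactness; and a \emph{stochastic} part, proving that almost surely $GTV_{n,\veps_n}$ and $E_{\veps_n}$ become interchangeable once one transports along the near-isometries $T_n$ of \eqref{NiceTranspMaps}. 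The hypothesis \eqref{vepsn} enters only in the stochastic part, through $\|T_n-\id\|_{L^\infty(\nu)}\ll\veps_n$; all the probabilistic inputs (the a.s.\ weak convergence $\nu_n\to\nu$ and the a.s.\ existence of maps $T_n$ with the bound \eqref{NiceTranspMaps}) hold on one event of full probability by Borel--Cantelli, on which the whole argument is run.

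For the deterministic part I would invoke the Bourgain--Brezis--Mironescu / Ponce theory of nonlocal-to-local functionals, adapted to the $\rho^2$-weight. Note first that $\sigma_\eta$ has been normalised so that $\veps^{-d-1}\!\int_{\R^d}\eta(|h|/\veps)\,|h\cdot e|\,dh=\sigma_\eta$ for every $\veps>0$ and every unit vector $e$. The $\Gamma$-$\liminf$ inequality (for $v_\veps\to v$ in $L^1(\nu)$) is obtained by a blow-up/slicing argument localising near a point of approximate differentiability of $v$, using (K1) to bound $\eta$ from below by compactly supported bounded kernels; the $\Gamma$-$\limsup$ inequality is proved first for $v\in C^\infty(\overline D)$ by Taylor-expanding $E_\veps(v)$ and then extended by density of $C^\infty$ in $BV$. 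For the last clause of the theorem one instead uses the structure theorem for sets of finite perimeter to approximate a given $E\subseteq D$ by smooth sets $E_k$ with $\mathrm{Per}(E_k)\to\mathrm{Per}(E)$ and $\mathbf 1_{E_k}\to\mathbf 1_E$ in $L^1$; for such $E_k$ the recovery sequence is $\mathbf 1_{E_k}$ itself (which is $\{0,1\}$-valued), because $E_\veps(\mathbf 1_{E_k})\to\sigma_\eta\,\mathrm{Per}_{\rho^2}(E_k)$. The needed compactness is the Ponce-type statement ``$\sup_\veps\!\big(\|v_\veps\|_{L^1(\nu)}+E_\veps(v_\veps)\big)<\infty\Rightarrow\{v_\veps\}$ precompact in $L^1(\nu)$''.

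For the stochastic part, the basic identity is the change of variables $T_{n\sharp}\nu=\nu_n$: for every $u_n\in L^1(\nu_n)$,
\[
 GTV_{n,\veps_n}(u_n)=\frac{1}{\veps_n^{d+1}}\iint_{D\times D}\eta\!\left(\frac{T_n(x)-T_n(y)}{\veps_n}\right)\big|(u_n\!\circ\!T_n)(x)-(u_n\!\circ\!T_n)(y)\big|\,\rho(x)\,\rho(y)\,dx\,dy ,
\]
which differs from $E_{\veps_n}(u_n\circ T_n)$ only in that the argument of $\eta$ is $(T_n(x)-T_n(y))/\veps_n$ in place of $(x-y)/\veps_n$. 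Writing $\delta_n:=\|T_n-\id\|_{L^\infty(\nu)}$ (so $\delta_n\ll\veps_n$) and choosing $\gamma_n\to0$ with $\delta_n/\gamma_n=o(\veps_n)$, monotonicity (K1) yields, for all pairs with $|x-y|\ge 2\delta_n/\gamma_n$, the two-sided bound $\eta\big(\tfrac{x-y}{\veps_n/(1-\gamma_n)}\big)\le\eta\big(\tfrac{T_n(x)-T_n(y)}{\veps_n}\big)\le\eta\big(\tfrac{x-y}{\veps_n/(1+\gamma_n)}\big)$, while the residual pairs lie at scale $\ll\veps_n$ and contribute negligibly. Hence $GTV_{n,\veps_n}(u_n)$ is squeezed between $(1-o(1))E_{\veps_n/(1+\gamma_n)}(u_n\circ T_n)$ and $(1+o(1))E_{\veps_n/(1-\gamma_n)}(u_n\circ T_n)$ (up to $o(1)$ additive errors), and since $\veps_n/(1\pm\gamma_n)$ has the same rate as $\veps_n$, the three conclusions transfer from $E_\veps$ to $GTV_{n,\veps_n}$ via Proposition \ref{EquivalenceTLp} (which identifies $TL^1$-convergence $u_n\to u$ with $L^1(\nu)$-convergence $u_n\circ T_n\to u$): the $\liminf$ inequality, the recovery-sequence part (restrict the $E_\veps$-recovery sequence from the deterministic part to the point cloud), and compactness.

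The step I expect to be the true obstacle is the quantitative control, in the stochastic part, of the ``close-pair'' contribution and of the passages $E_{\veps_n}\leftrightarrow E_{\veps_n/(1\pm\gamma_n)}$ under the bare hypotheses (K1)--(K2): $\eta$ may be unbounded near the origin, may fail to be compactly supported, and may have heavy tails, so one has to regularise $\eta$ from above and below simultaneously and exploit that the matching rate in \eqref{NiceTranspMaps} beats $\veps_n$ by a true power of $\log(n)/n^{1/d}$. Making the one-sided estimates above uniform over \emph{all} of $L^1(\nu_n)$, rather than merely over smooth functions where a Taylor expansion would do, is the technical heart of the matter and is precisely the content of \cite{GarciaTrillos2015}.
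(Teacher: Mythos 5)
This statement is not proved in the paper at all: it is quoted, with attribution, as Theorem 1.1, Theorem 1.2 and Corollary 1.3 of \cite{GarciaTrillos2015}, and the present paper simply invokes it. Your outline reproduces essentially the strategy of that reference — interposing the continuum nonlocal functional with $\rho(x)\rho(y)$ weight, proving its $\Gamma$-convergence and compactness towards the $\rho^2$-weighted $\sigma_\eta TV$ by BBM/Ponce-type arguments, and transferring to $GTV_{n,\veps_n}$ through the $L^\infty$-transportation maps of \eqref{NiceTranspMaps} with kernel dilations justified by (K1) and the condition \eqref{vepsn} — so at the level of approach it matches the cited proof; just be aware that, as written, it is a proof sketch whose genuinely hard steps (uniform control of close pairs for general kernels, and the compactness transfer) are deferred to \cite{GarciaTrillos2015} rather than carried out.
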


\section{Proof of Theorem \ref{mainTheorem}}
\label{SecMainTheorem}

\subsection{Overfitting regime $\lambda_n \ll \veps_n $}
\label{OverRegime} 

To prove Theorem \eqref{mainTheorem} in the regime $\lambda_n \ll \veps_n$, we use standard tools from convex analysis. The idea is simply to find the optimality conditions for $u_n^*$.

First, let us write $R_{n, \lambda_n}(u_n)$ as 
\[  \frac{\lambda_n}{  \veps_n n^2} J_{n}(u_n) + \frac{1}{n}\sum_{i=1}^{n} \lvert u_n(\x_i) - l_n(\x_i)  \rvert ,      \] 
where
\[ J_n(u_n):= \sum_{i,j} \eta_{\veps_n}(\x_i- \x_j) \left| u_n(\x_i) - u_n(\x_j)  \right| . \]
In what follows we identify functions $ f \in L^1(\nu_n)$ with vectors in $\R^n$. Namely, a function $f \in L^1(\nu_n)$ is identified with the vector $(f(\x_1), \dots, f(\x_n))$. 
From the minimality of $u_n^*$, we must have
\[  0 \in  \frac{\lambda_n}{ \veps_n  n^2}  \partial J_n(u_n^*) + \frac{1}{n} \partial  \left( \sum_{i=1}^{n} \lvert u_n^*(\x_i) - l_n(\x_i)  \rvert \right) \]
where the $\partial$ symbol denotes sub-gradient. The previous expression implies that there exists $w \in \R^n$ such that:
\begin{equation}\label{auxOver0}
w_i \in 
\begin{cases}
\{ 1 \} & \text{ if } u_n^*(\x_i) > \y_i \\
[-1,1] & \text{ if } u_n^*(\x_i) = \y_i \\
\{ -1 \} & \text{ if } u_n^*(\x_i) < \y_i
\end{cases} 
\end{equation}
for every $i =1,\dots, n$; and such that
\[ -\frac{n \veps_n w}{\lambda_n} \in \partial J_n(u_n^*) .\]

The Fenchel dual of $J_n$ is defined by
\[  J_n^*( f ) := \sup_{ g  \in \R^n } \left\{ \sum_{i=1}^{n}g_i f_i - J_n(g)  \right\}.\]
A straightforward consequence of this definition and the fact that $-\frac{n \veps_n w}{\lambda_n} \in \partial J_n(u_n^*)$ is that
\begin{equation}
  u_n^* \in \partial J_n^* \left( -\frac{n \veps_n w}{\lambda_n } \right).
  \label{auxOver} 
\end{equation}

Now, from the fact that $J_n$ is 1-homogeneous (as can be checked easily), it follows that $J_n^*$ has the form:
\begin{equation}
J_n^*(f)= \begin{cases}
0 & \text{ if  } f \in \mathcal{C}_n\\
\infty & \text{ if } f \not \in \mathcal{C}_n, 
\end{cases}
\label{auxOver2}
\end{equation}
where $\mathcal{C}_n$ is a closed, convex subset of $\R^n$. In this case we can give an explicit characterization of $\mathcal{C}_n$ using the following divergence operator. Given $p \in \R^{n^2} $, we define $\divergence(p) \in \R^n$ by:
\[ \divergence(p)_i := \sum_{j=1}^{n}\eta_{\veps_n}(\x_i - \x_j)( p_{ji} - p_{ij}   ), \quad i \in \left\{1, \dots, n \right\}. \]  
By reordering sums, one obtains an analog of the divergence theorem, namely
\[
\sum_{i = 1}^n v_i \divergence(p)_i = \sum_{i,j=1 \dots n} \eta_{\veps_n}(\x_i - \x_j) p_{ij} (v_j - v_i).
\]
This readily implies that
\[
J_n(f) = \sup \left\{ \sum f_i r_i : r_i = \divergence(p)_i, |p_{ij}| \leq 1\right\}.
\]
Since $(J_n^*)^* = J_n$, we have that $J(f) = \sup_{r \in \mathcal{C}_n} \sum_{i=1}^n f_i r_i$, and thus we find that
\[ \mathcal{C}_n = \left\{ \divergence(p) \: : \: p \in \R^{n^2} \text{ s.t } \lvert p_{ij}\rvert \leq 1, \text{ } \forall i,j \right\}.\]

From \eqref{auxOver} we know in particular that $\partial J_n^* \left( -\frac{n \veps_n w}{\lambda_n } \right) \not = \emptyset$. On the other hand, from \eqref{auxOver2}, we conclude that $-\frac{n \veps_n w}{\lambda_n } \in \mathcal{C}_n$.  In turn, this implies that there exists $p \in \R^{n^2}$ with $|p_{ij}|\leq 1$ for all $i,j$ and such that:
\[  \divergence(p)=  -\frac{n \veps_n w}{\lambda_n }. \]
In particular, for all $i =1, \dots, n$,
\begin{align}
\begin{split}
\lvert w_i \rvert  & \leq \frac{2 \lambda_n}{\veps_n} \frac{1}{n} \sum_{j=1}^{n}\eta_{\veps_n}(\x_i - \x_j) 
\\ & =  \frac{2 \lambda_n}{\veps_n } \int_{D} \eta_{\veps_n} \left( T_n(x) - \x_i \right) d \nu(x) 
\end{split}
\label{auxOver5}
\end{align}

Let us introduce the kernel $\hat{\eta}: [0, \infty) \rightarrow \R$ given by
\begin{equation*}
 \hat{\eta}(r) :=\begin{cases}  \eta(0) & \text{ if } r \in [0,1]  \\ \eta(r -1) & \text{ if } r >1.\end{cases}
\label{hateta}
\end{equation*}
Notice that from \eqref{NiceTranspMaps} and the assumptions on $\veps_n$ ( i.e. \eqref{vepsn}), it follows that for all large enough $n$, $ \frac{\lVert Id- T_n \rVert_{L^\infty(\nu)}}{\veps_n} \leq 1$. 
In particular, for all large enough $n$, it follows from the definition of $\hat{\eta}$ that for all $i=1, \dots, n$ and for all $x \in D$
\[ \eta_{\veps_n}\left(T_n(x) -\x_i\right) \leq \hat{\eta}_{\veps_n}\left(x -\x_i\right).\]
Going back to \eqref{auxOver5}, this shows that for every $i=1,\dots,n$ 
\[ \lvert w_i \rvert \leq  \frac{2 \lambda_n}{\veps_n } \int_{D} \hat{\eta}_{\veps_n} \left( x - \x_i \right) d \nu(x) \leq \frac{2M \lambda_n}{\veps_n} \int_{\R^d}\hat{\eta}(x) dx,   \]
where we have used \eqref{BoundsRho}. Because of this, and from the fact that $\frac{\lambda_n}{\veps_n} \rightarrow 0$, we conclude that if $n$ is large enough, $|w_i| < 1$ for all $i=1, \dots, n$. 

Thus by \eqref{auxOver0}, for $n$ sufficiently large we have that
\[ u_n^*(\x_i) = \y_i , \quad \forall i= 1, \dots,n. \]
In short, this means that for all large enough $n$, $u_n^* = l_n $. Since $l_n$ does not converge in $TL^1$ to a function as $n \rightarrow \infty$ (see Lemma \ref{lnNoConv}), we conclude that the same is true for the sequence $\left\{ u_n^* \right\}_{n \in \N}$. 
%



%



\subsection{Underfitting regime: $ \lambda_n \rightarrow \lambda \in (0,\infty]$.  }
\label{UnderRegime}

Now we establish Theorem \ref{mainTheorem} in the underfitting regime $\lambda_n \rightarrow \lambda \in (0,\infty]$. 
The main tool we have at hand to study this regime is Theorem \ref{ContTV}. In particular, we will use the compactness result from Theorem \eqref{ContTV}.

First of all, notice that for every $n \in \N$
\begin{equation}
   \lambda_n GTV_{n, \veps_n}(u_n^*)   \leq    R_{n, \lambda_n}(u_n^*) \leq \inf_{ y \in \R}  \frac{1}{n} \sum_{i=1}^{n} \left| y- \y_i \right| \leq 1,         
   \label{UnderCompact}
\end{equation}
and so in particular, $GTV_{n, \veps_n}(u_n^*) \leq \frac{1}{\lambda_n} $. Since $\lambda_{n} \rightarrow \lambda \in (0,\infty]$, we conclude that 
\[  \sup_{n \in \N} GTV_{n, \veps_n}(u_n^*) < + \infty . \]

From the compactness statement in Theorem \ref{ContTV}, we deduce that $ \left\{ u_n^* \right\}_{n \in \N}$ is pre-compact in $TL^1$.

\textbf{Case 1:} Let us assume first that $\lambda_n \rightarrow \infty$. In this case, from \eqref{UnderCompact}, we actually deduce that, 

\begin{equation}
 \lim_{n \rightarrow \infty} GTV_{n, \veps_n} (u_n^*) =0     
\label{StrongUnder} 
\end{equation}

Now, by the pre-compactness of $\left\{ u_n^* \right\}_{n \in \N}$, we know that up to subsequence (that we do not relabel), $\left\{ u_n^* \right\}_{n \in \N}$ converges in the $TL^1$-sense towards some $u \in  L^1(\nu)$. From the lower semi-continuity of the graph total variation (i.e. the liminf inequality in Theorem \ref{ContTV}) and from \eqref{StrongUnder}, we deduce that $TV(u)=0$. The connectedness of the domain $D$ implies that $u$ is constant on $D$. That is, $u\equiv a$ for some $a \in \R$. 
Because, $\bm{\nu}_n \converges{w} \bm{\nu}$, we know that for every $b \in \R$,
\[ \lim_{n \rightarrow \infty} \frac{1}{n} \sum_{i=1}^{n} \lvert b - \y_i \rvert = \int_{D\times \R} \lvert b - y \rvert d \bm{\nu}(x,y) = R(b).    \]
On the other hand, for a given $b \in \R$,
\[ \frac{1}{n} \sum_{i=1}^{n} \lvert u_n^*(\x_i) - \y_i \rvert  \leq R_{n, \lambda_n}(u_n^*) \leq \frac{1}{n} \sum_{i=1}^{n} \lvert b- \y_i \rvert.\]
Additionally, from $u_n^* \converges{TL^1} a$, it is straightforward to check that
\[ \lim_{n \rightarrow \infty} \frac{1}{n} \sum_{i=1}^{n} \lvert u_n^*(\x_i) - \y_i \rvert  =  \lim_{n \rightarrow \infty} \frac{1}{n} \sum_{i=1}^{n} \lvert a - \y_i \rvert = R(a). \]
From the previous computations we deduce that $R(a) \leq R(b)$ for every $b \in \R$. This shows that $a=  u^\infty$ where $u^\infty$ is defined in \eqref{uinfty}. We have just shown that for every subsequence of $\left\{ u_n^* \right\}_{n \in \N}$, there is a further subsequence converging towards $u^\infty$. Thus, the full sequence $\left\{ u_n^* \right\}_{n \in \N}$ converges towards $u^\infty$ in the $TL^1$-sense as we wanted to show.
Finally, from Proposition \ref{contRisk} it follows that $\lim_{n \rightarrow \infty}R_n(u_n^*) = R(u^\infty) = \min_{y \in \R} R(y)$. 

\textbf{Case 2:} Let us now assume that $\lambda_n \rightarrow \lambda \in (0,\infty)$. From Proposition \ref{contRisk} and from the $\Gamma$-convergence of $GTV_{n, \veps_n}$ towards $\sigma_\eta TV$ (Theorem \ref{ContTV}) it is immediate that $R_{n , \lambda_n} \converges{\Gamma} R_\lambda$ as $n \rightarrow \infty$ in 
the $TL^1$-sense. Indeed, in \cite{DalMaso} the $\Gamma$-convergence of continuous perturbations of a $\Gamma$-converging sequence is considered: in our case we are perturbing the functionals $\lambda_n GTV_{n, \veps_n}$ with $R_n$. From the fact that $R_{n , \lambda_n} \converges{\Gamma} R_\lambda$ in the $TL^1$-sense and the fact that $\left\{ u_n^* \right\}_{n \in \N}$ is pre-compact in $TL^1$, 
it follows that every subsequence of $u_n^*$ has a further subsequence converging to a minimizer of $R_\lambda$. From the properties of $\Gamma$-convergence (see \cite{DalMaso}), 
it also follows that $\lim_{n \rightarrow \infty}R_{n, \lambda_n}(u_n^*) = \min_{u \in L^1(\nu)}R_{\lambda}(u).  $

\subsection{Regime  $ \veps_n \ll \lambda_n \ll 1$}
\label{GoodRegime}


%

The idea of the proof of Theorem \ref{mainTheorem} in the regime $\veps_n \ll \lambda_n \ll 1$ is as follows. We establish that if the sequence $\left\{ u_n^*  \right\}_{n \in \N}$ converges weakly to some function $u \in L^1(\nu)$ (recall the definition of weak convergence given at the beginning of Subsection \ref{AuxResul}), then the convergence also happens in the $TL^1$-sense. 
Then, we establish that if $u_n^*$ converges weakly to some function $u \in L^1(\nu)$, and additionally
\begin{equation}
 \lim_{n \rightarrow \infty} \int_{D} u_n^*(x) l_n(x) d \nu_n(x) = \int_{D}  u(x) \mu(x) d\nu(x), 
 \label{ConditionConvergence}
\end{equation}
then $u$ has to be equal to the Bayes classifier $u_B$. So in order to establish that $u_n^* \converges{TL^1} u_B$, it will be enough to show that $u_n^*$ converges weakly to some $u$ and that \eqref{ConditionConvergence} is satisfied. 

Now, since $D$ is a bounded set in $\R^d$ and since all the functions $u_n^*\circ T_n$ are uniformly bounded in $L^\infty(\nu)$, it follows from Dunford-Pettis theorem (see for example \cite{CalcVarLeoni}), that 
the sequence $\left\{ u_n^* \circ T_n \right\}_{n \in \N}$ is weakly sequentially pre-compact, that is, every subsequence of $\left\{ u_n^* \circ T_n \right\}_{n \in \N}$ has a further subsequence which converges weakly. 
Because of this, we may without the loss of generality assume that the sequence $\left\{ u_n^* \right\}_{n \in \N}$ converges weakly to some $u \in L^1(\nu)$. Hence the task is to show that \eqref{ConditionConvergence} holds in the regime $\veps_n \ll \lambda_n \ll 1$.

To establish \eqref{ConditionConvergence}, we heuristically observe that the oscillations of the functions $u_n^*$ happen at a scale larger than $\veps_n$, whereas 
the oscillations of $l_n$ happen at a scale smaller than $\veps_n$; the statement regarding the oscillations of the functions $u_n^*$ is related to the fact that the energies $ \lambda_n GTV_{n,\veps_n}(u_n^*) $ are uniformly bounded and the fact that  $\veps_n \ll \lambda_n \ll 1$, on the other hand, 
the statement regarding the oscillations of the functions $l_n$ is a direct consequence of concentration inequalities. Heuristically, we may think of the function $u_n^*$ as constant on balls of radius $\veps_n$,
whereas we may view the functions $l_n$ as rapidly oscillating on those same neighborhoods; because of this, when integrating over such neighborhoods, the functions $l_n$ behave like their weak limit (i.e. the function $\mu$, see Lemma \ref{LemmaWeakLabels}). 

There are certain connections between the ideas in the proofs here and the theory of fractional Sobolev spaces. In particular, the consistency regime $ \lambda_n GTV_{n,\veps_n}(u_n^*) $ has scaling similar to a fractional Sobolev seminorm. Hence the argument that we use of approximating $u_n^*$ with functions that are constant on a length scale $\veps_n$ is not unlike the argument used to prove the compactness of fractional Sobolev spaces, see e.g. the proof of Theorem 7.1 in \cite{DiNezza2012}.
 
With this road-map in mind let us start making the previous statements precise.
\begin{lemma}
With probability one the following statement holds: Let $ \left\{ u_n \right\}_{n \in \N} $ be a sequence of $[0,1]$-valued functions, with $u_n \in L^1(\nu_n)$, and such that $u_n \rightharpoonup u$ for some function $u \in L^1(\nu)$ taking only the values $0$ and $1$. Then, $u_n \converges{TL^1} u$ as $n \rightarrow \infty$.
\label{LemmaGoodRegime1}
\end{lemma}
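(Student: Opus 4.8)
The plan is to exploit the characterization of $TL^1$-convergence in Proposition \ref{EquivalenceTLp}: since $\nu$ is absolutely continuous, it suffices to show that $\int_D |u(x) - u_n(T_n(x))| \, d\nu(x) \to 0$ for the transport maps $T_n$ from \eqref{NiceTranspMaps}. Equivalently, writing $v_n := u_n \circ T_n \in L^1(\nu)$, we already know $v_n \rightharpoonup u$ weakly in $L^1(\nu)$, and we want to upgrade this to strong $L^1(\nu)$ convergence. The crucial structural fact that makes this possible is that $u$ takes only the values $0$ and $1$: weak convergence to an extreme point of the unit ball of $L^\infty$ (among $[0,1]$-valued functions) forces strong convergence, because no oscillation can survive in the limit.

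The key steps, in order: first, record that $v_n \rightharpoonup u$ in $L^1(\nu)$ with all $v_n$ taking values in $[0,1]$ and $u$ taking values in $\{0,1\}$. Second, compute
\[
\int_D |v_n - u| \, d\nu = \int_{\{u=0\}} v_n \, d\nu + \int_{\{u=1\}} (1 - v_n)\, d\nu,
\]
using that $v_n \in [0,1]$ and $u$ is $\{0,1\}$-valued, so $|v_n - u| = v_n$ on $\{u=0\}$ and $|v_n - u| = 1 - v_n$ on $\{u = 1\}$. Third, observe that $\charf_{\{u=0\}}$ and $\charf_{\{u=1\}}$ lie in $L^\infty(\nu)$, so testing the weak convergence $v_n \rightharpoonup u$ against these two functions gives
\[
\int_{\{u=0\}} v_n \, d\nu \to \int_{\{u=0\}} u \, d\nu = 0, \qquad \int_{\{u=1\}} v_n \, d\nu \to \int_{\{u=1\}} u \, d\nu = \nu(\{u=1\}),
\]
hence $\int_{\{u=1\}}(1 - v_n)\, d\nu \to 0$ as well. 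Adding the two, $\int_D |v_n - u|\, d\nu \to 0$, i.e. $v_n \to u$ strongly in $L^1(\nu)$. Fourth, invoke Proposition \ref{EquivalenceTLp} (the equivalence of $TL^1$-convergence with $L^1(\nu)$-convergence of $u_n \circ T_n$, which holds on the probability-one event where the maps $T_n$ from \eqref{NiceTranspMaps} exist) to conclude $u_n \converges{TL^1} u$.

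I do not expect any genuine obstacle here — the lemma is essentially the elementary fact that weak $L^1$ convergence plus a pointwise one-sided bound forces strong convergence when the limit is extremal. The only bookkeeping point worth stating carefully is that we are working on the probability-one event where $\nu_n \converges{w} \nu$ and the transport maps $T_n$ satisfying \eqref{NiceTranspMaps} exist, so that the identification between $TL^1$-convergence of $\{u_n\}$ and $L^1(\nu)$-convergence of $\{u_n \circ T_n\}$ is valid; everything else is a three-line computation. (One could alternatively phrase the argument via $\int |v_n - u|^2 \, d\nu = \int v_n^2 - 2 v_n u + u \, d\nu \le \int v_n - 2 v_n u + u\, d\nu \to 0$ using $v_n^2 \le v_n$ and $u^2 = u$, but the direct $L^1$ splitting above is cleaner.)
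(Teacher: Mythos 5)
Your proof is correct, but it takes a different route from the paper. You pull everything back to $L^1(\nu)$ via the transport maps $T_n$, write $\int_D|v_n-u|\,d\nu=\int_{\{u=0\}}v_n\,d\nu+\int_{\{u=1\}}(1-v_n)\,d\nu$, and test the weak convergence $v_n\rightharpoonup u$ against the indicators $\charf_{\{u=0\}},\charf_{\{u=1\}}\in L^\infty(\nu)$; this needs nothing beyond the probability-one event on which $\nu_n\converges{w}\nu$ and the maps from \eqref{NiceTranspMaps} exist, together with Proposition \ref{EquivalenceTLp}. The paper instead stays on the point cloud: it invokes the last statement of Theorem \ref{ContTV} (the recovery/limsup construction for $\{0,1\}$-valued limits) to produce a $\{0,1\}$-valued sequence $w_n\converges{TL^1}u$, expands $\int_D|w_n-u_n|\,d\nu_n=\int_D u_n\,d\nu_n+\int_D(1-2u_n)w_n\,d\nu_n$, and passes to the limit using the strong convergence of $w_n$ against the weak convergence of $u_n$, with the identity $u^2=u$ playing the same extremality role that your $\{u=0\}/\{u=1\}$ splitting plays. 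Your argument is the more elementary of the two, since it avoids the $\Gamma$-convergence machinery entirely; the paper's version, on the other hand, never needs to name the sets $\{u=0\},\{u=1\}$ and showcases the weak--strong pairing technique together with the recovery sequence, tools it reuses elsewhere (e.g. in Lemma \ref{LemmaGoodRegime2} and Proposition \ref{contRisk}). Both hinge on the same essential fact that the limit is an extreme point of the $[0,1]$-valued functions, so no correctness issue arises.
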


\begin{proof}
We may work on a set of probability one, where all the statements in Theorem \ref{ContTV} hold. Let the sequence $\left\{ u_n \right\}_{n\in \N}$ and the function $u$ satisfy the hypothesis in the statement of the lemma.  We know that there exists a sequence $\left\{ w_n \right\}_{n \in \N}$ with 
 \[ w_n \converges{TL^1} u \]
and such that $w_n \in \left\{0,1 \right\} $. The existence of such sequence of functions follows in particular from the last statement in Theorem \ref{ContTV}. 
Then, from the fact that $w_n\in \{0,1 \}$ and $u_n \in [0,1]$, it is straightforward to see that
\[ \int_D | w_n - u_n| d \nu_n = \int_D u_n d \nu_n +  \int_D (1-2 u_n) w_n d \nu_n.  \]

Using the fact that $w_n \converges{TL^1} u$ (strong convergence), $u_n \rightharpoonup  u$ (weak convergence), and that $u_n,w_n$ are uniformly bounded, we deduce that
\[  \lim_{n \rightarrow \infty}  \int_D |w_n- u_n | d \nu_n  = \lim_{n \rightarrow \infty} \int_D u_n d \nu_n + \lim_{n \rightarrow \infty}  \int_D  (1-2 u_n) w_n  d \nu_n = \int_D u d \nu + \int_D (1- 2 u) u d \nu =0;\]
note that in the last equality we have used the fact that $u^2 = u$. Given that $w_n \converges{TL^1} u$, we conclude that $u_n \converges{TL^1} u$ as well.
\end{proof}

\begin{lemma}
With probability one the following statement holds: if a sequence of minimizers $\left\{u_n^* \right\}_{n \in \N}$ of the energies $R_{n,\lambda_n}$ satisfies $u_n^* \rightharpoonup u$ for some function $u \in L^1(\nu)$ and in addition condition \eqref{ConditionConvergence} holds, then $u=u_B$.
\label{LemmaGoodRegime2}
\end{lemma}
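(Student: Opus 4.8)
The plan is to compare the energy of $u_n^*$ with that of a recovery sequence for $u_B$ and then use the extra hypothesis \eqref{ConditionConvergence} to identify $\lim_n R_n(u_n^*)$ with $R(u)$; since $u_B$ is, under assumption \eqref{eqn:mu-neq-half}, the unique minimizer of $R$ over $L^1(\nu)$, the resulting inequality $R(u) \leq R(u_B)$ forces $u = u_B$. Throughout we work on the probability-one event on which the transportation maps $T_n$ of \eqref{NiceTranspMaps} exist and the conclusions of Theorem \ref{ContTV}, Lemma \ref{LemmaWeakLabels} and Proposition \ref{contRisk} all hold.

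First I would note that every minimizer $u_n^*$ is $[0,1]$-valued: composing with the truncation $t \mapsto \min(\max(t,0),1)$ does not increase $GTV_{n,\veps_n}$ (it is $1$-Lipschitz) and, because $l_n$ is $\{0,1\}$-valued, strictly decreases $R_n$ at any vertex where $u_n^*$ leaves $[0,1]$; since this constraint passes to weak limits, $u$ is $[0,1]$-valued too, hence $u \in L^1(\nu)$. Next, by the limsup inequality of Theorem \ref{ContTV} (and its last assertion, applicable since $u_B$ is $\{0,1\}$-valued), there is a sequence $w_n \in \{0,1\}$ with $w_n \converges{TL^1} u_B$ and $\limsup_n GTV_{n,\veps_n}(w_n) \leq \sigma_\eta TV(u_B) < \infty$. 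Using that $\lambda_n \to 0$ in the regime under consideration, the minimality of $u_n^*$, and Proposition \ref{contRisk} applied to $w_n$,
\[ \limsup_{n \to \infty} R_n(u_n^*) \;\leq\; \limsup_{n \to \infty} R_{n,\lambda_n}(u_n^*) \;\leq\; \limsup_{n \to \infty}\big( \lambda_n GTV_{n,\veps_n}(w_n) + R_n(w_n) \big) \;=\; R(u_B). \]

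To compute $\lim_n R_n(u_n^*)$ I would use the pointwise identity $|u_n^*(\x_i) - l_n(\x_i)| = u_n^*(\x_i) + l_n(\x_i) - 2 u_n^*(\x_i) l_n(\x_i)$, valid because $u_n^* \in [0,1]$ and $l_n \in \{0,1\}$, to write
\[ R_n(u_n^*) = \int_D u_n^* \, d\nu_n + \int_D l_n \, d\nu_n - 2 \int_D u_n^* l_n \, d\nu_n. \]
By the change of variables \eqref{chvar}, $\int_D u_n^* \, d\nu_n = \int_D u_n^* \circ T_n \, d\nu \to \int_D u \, d\nu$ (testing $u_n^* \rightharpoonup u$ against the constant $1$); likewise $\int_D l_n \, d\nu_n = \int_D l_n \circ T_n \, d\nu \to \int_D \mu \, d\nu$ by Lemma \ref{LemmaWeakLabels}; and the bilinear term tends to $2\int_D u\mu \, d\nu$ by hypothesis \eqref{ConditionConvergence}. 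Hence, invoking the identity $R(v) = \int_D \mu \, d\nu + \int_D (1-2\mu) v \, d\nu$ for $[0,1]$-valued $v$ established in Subsection \ref{sec:set-up},
\[ \lim_{n \to \infty} R_n(u_n^*) = \int_D u \, d\nu + \int_D \mu \, d\nu - 2 \int_D u\mu \, d\nu = R(u). \]
Combining the two displays gives $R(u) \leq R(u_B)$, and uniqueness of the Bayes classifier yields $u = u_B$.

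The crux — and the reason the hypothesis is there at all — is the passage to the limit in the bilinear cross term $\int_D u_n^* l_n \, d\nu_n$: weak convergence of $u_n^*$ alone handles only the linear terms, and a product of two merely weakly convergent sequences need not converge to the product of the limits, so \eqref{ConditionConvergence} supplies exactly the missing piece. The separate and more delicate task of verifying \eqref{ConditionConvergence} itself in the regime $\veps_n \ll \lambda_n \ll 1$ — which relies on the separation of scales between the oscillations of $u_n^*$ and those of $l_n$ — is what the subsequent lemmas address.
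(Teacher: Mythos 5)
Your proof is correct and follows essentially the same route as the paper's: compare $R_{n,\lambda_n}(u_n^*)$ against the $\{0,1\}$-valued recovery sequence for $u_B$ from Theorem \ref{ContTV} to get $\limsup_n R_n(u_n^*) \leq R(u_B)$, use the $[0,1]$-valuedness of $u_n^*$ together with Lemma \ref{LemmaWeakLabels}, weak convergence, and \eqref{ConditionConvergence} to identify $\lim_n R_n(u_n^*) = R(u)$, and conclude by uniqueness of the Bayes classifier. The only difference is that you spell out the truncation argument for $u_n^* \in [0,1]$, which the paper simply asserts as clear.
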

\begin{proof}
We know that with probability one, for the function $u_B$, there exists a sequence $\left\{ u_n \right\}_{n \in \N}$ of $\left\{0,1 \right\}$-valued functions with $u_n \in L^1(\nu_n)$, such that $u_n \converges{TL^1} u_B$ as $n \rightarrow \infty$ and such that 
$\limsup_{n \rightarrow \infty} GTV_{n,\veps_n}(u_n) \leq \sigma_\eta TV(u_B) < +\infty$; this follows from the last statement in Theorem \ref{ContTV} and the fact that we assumed that $u_B$ has finite total variation.
From this, the fact that $\lambda _n \rightarrow 0$ and Lemma \ref{contRisk}, we deduce that
\[  \limsup_{n \rightarrow \infty} \lambda_n GTV_{n,\veps_n}(u_n)  + R_n(u_n) =   R(u_B).     \]

On the other hand, since $u_n^*$ minimizes $R_{n,\lambda_n}$, we conclude that
\begin{equation}
 \limsup_{n \rightarrow \infty} R_{n , \lambda_n}(u_n^*) \leq  \limsup_{n \rightarrow \infty} \left( \lambda_n GTV_{n,\veps_n}(u_n)  + R_n(u_n) \right)=   R(u_B).
\label{RiskConvergence}
\end{equation}

Now, given that $u_n^*$ minimizes $R_{n, \lambda_n}$ it is clear that $u_n^*$ takes values in $[0,1]$ only, and thus we can write 
\[ R_n(u_n^*) =  \int_D l_n d \nu_n + \int_D (1-2 l_n ) u_n^* d\nu_n. \]
From \eqref{ConditionConvergence}, Lemma \ref{LemmaWeakLabels}, and the fact that $u_n^* \rightharpoonup u$, we deduce that
\[  \lim_{n \rightarrow \infty} R_n(u_n^*) = \lim_{n \rightarrow \infty} \left( \int_D l_n d \nu_n + \int_D (1-2 l_n ) u_n^* d\nu_n\right) = \int_D \mu d \nu  + \int_D (1- 2 \mu) u d\nu = R(u).      \]
where the last equality follows from the fact that $u$ must take values in $[0,1]$. Since we clearly have $R_n(u_n^*) \leq R_{n , \lambda_n}(u_n^*)$ for every $n$, 
we deduce from the above equality and \eqref{RiskConvergence}, that
\[  R(u) \leq R(u_B). \]
The fact that $u_B$ is the unique minimizer of $R$ implies that $u=u_B$ as we wanted to show.
\end{proof}

In light of Lemma \ref{LemmaGoodRegime1}, Lemma \ref{LemmaGoodRegime2}, the fact that $u_B$ takes values in $\{0,1\}$ and the discussion at the beginning of 
this subsection, to show that $u_n^* \converges{TL^1} u_B $, it remains to show that when $u_n^* \rightharpoonup u$ for some $u \in L^1(\nu)$, \eqref{ConditionConvergence} holds.  The remainder of the section is devoted to this purpose.

Let us consider a sequence $\left\{ \veps_n \right\}_{n \in \N}$ of positive numbers converging to zero satisfying \eqref{vepsn}. For every $n \in \N$ we consider a family of disjoint balls  $B(z_1, \veps_n/4),\dots, B(z_{k_n}, \veps_n/4) $ satisfying the following conditions:
\begin{enumerate}
\item Every $z_i$ belongs to $D$.
\item The family of balls is maximal, in the sense that every ball $B(z, \veps_n/4)$  with $z \in D$, intersects at least one of the balls $B(z_i, \veps_n /4)$. 
\end{enumerate}
We let $S_n:= \left\{z_1,\dots, z_{k_n} \right\}$. By the maximality property of the family of balls $\left\{B(z,\veps_n/4) \right\}_{z \in S_n}$, we see that $\left\{B(z,\veps_n/2) \right\}_{z \in S_n}$ covers $D$. Moreover, we claim that there is a constant $C>0$ such that,
\begin{equation}
 |S_n | \leq \frac{C}{\veps_n^d}. 
\label{CardSn}
\end{equation}
 To see this, 
we may use the regularity assumption on the boundary of $D$
as follows. From the fact that $D$ is an open and bounded set with Lipschitz boundary it follows (see \cite{Gris}, Theorem 1.2.2.2) that there exists a cone $\mathcal{C} \subseteq \R^d$ with non-empty interior and vertex at the origin, a family of rotations $\left\{ R_{x} \right\}_{x \in D}$ and a number $1>\zeta>0$ such that for every $x \in D,$
$$  x + R_x(\mathcal{C} \cap B(0,\zeta))  \subseteq D. $$
Thus, 
\[  \nu(B(x,\veps_n/4)) = \int_{B(x, \veps_n/4) \cap D} \rho(x) dx \geq  \int_{x + \frac{\veps_n}{4}(R_x(\mathcal{C} \cap B(0,\zeta))) } \rho(x) dx  \geq \frac{|\mathcal{C} \cap B(0,\zeta)|}{m4^d} \veps_n^d,\]
where $|\mathcal{C} \cap B(0,\zeta)|$ denotes the volume of $\mathcal{C} \cap B(0,\zeta)$. The bottom line is that there exists a constant $c>0$ such that for every $x \in D$ we have:
\begin{equation}
\nu(B(x,\veps_n/4)) \geq c \veps_n^d.
\label{sizeballs}
\end{equation}
The inequality in \eqref{CardSn} follows now immediately from 
\[  c |S_n| \cdot  \veps_n^d \leq  \sum_{z \in S_n} \nu(B(z, \veps_n /4)  )  = \nu( \cup_{z \in S_n} B(z, \veps_n /4)  )  \leq   \nu(D)=1 . \] 

Let $\left\{\psi_{z} \right\}_{z \in S_n}$ be a smooth partition of unity subordinated to the open covering $\left\{B(z,\veps_n) \right\}_{z \in S_n} $.  We remark that the functions $\psi_z$ can be chosen to satisfy
\begin{equation}
\lVert \nabla  \psi_z \rVert_{L^\infty(\R^d)} \leq \frac{C}{\veps_n}  ,
\label{UniformBoundGradients}
\end{equation}
where $C>0$ is a constant independent of $n$ or $z \in S_n$ (see e.g. the construction in Theorem C.21 in \cite{Leoni}). 

The following lemma is a an important first step in proving \eqref{ConditionConvergence}. The proof uses concentration inequalities to control oscillations on a small length scale.

\begin{lemma}
Let $(\x_1, \y_1), ( \x_2, \y_2), \dots, (\x_n, \y_n), \dots$ be i.i.d. samples from $\bm{\nu}$.  Assume that $\left\{ \veps_n \right\}_{n \in \N}$ is a sequence of positive numbers satisfying:
\[    \frac{(\log(n))^{1/d}}{n^{1/d}} \ll \veps_n \ll 1 .\]
Then, with probability one 
\[  \lim_{n \rightarrow \infty} \sum_{z \in S_n} \left| \frac{1}{n}\sum_{i=1}^{n}\left( \mu(\x_i) - \y_i \right) \cdot \psi_z(\x_i)   \right|   =0.\]
\label{LemmaAuxWeak}
\end{lemma}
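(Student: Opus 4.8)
The quantity to be controlled is a sum over $z \in S_n$ of terms $\left| \frac{1}{n}\sum_{i=1}^{n}(\mu(\x_i)-\y_i)\psi_z(\x_i)\right|$. My plan is to treat each summand with a concentration inequality, conditionally on the feature points $\{\x_i\}$, and then take a union bound over the at most $C\veps_n^{-d}$ indices $z$, finally summing the resulting high-probability bounds and invoking Borel--Cantelli. First I would condition on $\x_1,\dots,\x_n$ and observe that, given the features, the random variables $\y_i$ are independent with $\mathbb{E}(\y_i \mid \x_i) = \mu(\x_i)$, so that the summands $(\mu(\x_i)-\y_i)\psi_z(\x_i)$ are independent, mean-zero, and bounded in absolute value by $\psi_z(\x_i) \le 1$. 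Moreover $\psi_z$ is supported in $B(z,\veps_n)$, so at most $N_z := \#\{i : \x_i \in B(z,\veps_n)\}$ of the terms are nonzero, and each is bounded by $1$; the variance of the sum $\sum_i (\mu(\x_i)-\y_i)\psi_z(\x_i)$ is at most $\tfrac14 N_z$.

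The key step is then Bernstein's (or Hoeffding's) inequality: conditionally on the features,
\[
\Prob\left( \left| \frac{1}{n}\sum_{i=1}^n (\mu(\x_i)-\y_i)\psi_z(\x_i)\right| > t \;\Big|\; \{\x_i\}\right) \le 2\exp\left(-\frac{c\, n^2 t^2}{N_z + nt}\right).
\]
I would choose $t = t_n$ slowly decaying (say $t_n \to 0$ like a small power of $\log n$ over the right scale, or $t_n$ proportional to $\veps_n$ times a logarithmic factor) so that, after multiplying by $|S_n| \le C\veps_n^{-d}$, the union bound is summable in $n$. Here I need a matching control on the $N_z$: with high probability, simultaneously for all $z \in S_n$, $N_z \le C' n \veps_n^d$, which follows from a standard VC/Bernstein bound on the empirical measure of balls (the balls $B(z,\veps_n)$ form a family of controlled complexity, and $\nu(B(z,\veps_n)) \le M\omega_d \veps_n^d$). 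Plugging $N_z \lesssim n\veps_n^d$ into the exponent gives a bound of the form $2|S_n|\exp(-c\, n\veps_n^d t_n^2 / \veps_n^d \cdot \dots)$; one checks that the hypothesis $(\log n)^{1/d}/n^{1/d} \ll \veps_n$, i.e. $n\veps_n^d \gg \log n$, is exactly what makes $n\veps_n^d$ beat $\log(|S_n|) \sim d\log(1/\veps_n) \lesssim \log n$ with room to spare, so the union-bounded probability is summable and Borel--Cantelli closes the argument.

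Two subtleties deserve care, and the second is the main obstacle. The first is that the sum over $z$ of $|S_n|$ terms each of size $t_n$ only gives a bound of $|S_n| t_n$, which does \emph{not} go to zero unless $t_n \ll \veps_n^d$; but with $t_n$ that small the exponent $n\veps_n^d t_n^2 \cdot (\text{stuff})$ needs re-examination. The cleaner route, which I would actually pursue, is to exploit that the $\psi_z$ form a partition of unity, so $\sum_{z} \psi_z(\x_i) = 1$ and the individual sums are not independent "worst cases" — rather, I would bound $\sum_z |A_z|$ where $A_z = \frac1n\sum_i(\mu(\x_i)-\y_i)\psi_z(\x_i)$ by splitting according to the sign of $A_z$: $\sum_z|A_z| = \sum_z \epsilon_z A_z = \frac1n\sum_i (\mu(\x_i)-\y_i)\sum_z \epsilon_z\psi_z(\x_i)$ for signs $\epsilon_z = \sign(A_z)$, and the function $g_n(x) := \sum_z \epsilon_z \psi_z(x)$ is bounded by $1$ pointwise (bounded overlap of the supports, with $\le C_d$ balls meeting any point) — but $g_n$ depends on the data through the $\epsilon_z$, so one must union-bound over the (finitely many, at most $2^{|S_n|}$) sign patterns. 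That exponential count $2^{|S_n|} = 2^{C\veps_n^{-d}}$ against the Hoeffding bound $\exp(-c n t_n^2)$ requires $n t_n^2 \gg \veps_n^{-d}$, i.e. $t_n \gg \veps_n^{-d/2} n^{-1/2}$, which combined with $t_n \to 0$ and $n\veps_n^d \gg \log n$ is again consistent — this is the delicate bookkeeping that constitutes the heart of the proof, and I would organize it by first establishing the uniform bound $\max_z N_z \le C n\veps_n^d$ on a probability-one event, then running the sign-pattern union bound with $t_n$ chosen as a slowly-decaying sequence (e.g. any $t_n \to 0$ with $t_n^2 n \veps_n^d \to \infty$ faster than $\log(1/\veps_n)$), and closing with Borel--Cantelli.
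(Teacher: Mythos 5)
Your proposal is correct in substance but takes a genuinely different route from the paper, and the reason you give for abandoning the ``direct'' route is a miscalculation: the paper's proof \emph{is} that direct route. Conditionally on the features, the paper applies Hoeffding with the localized denominator $\sum_i \psi_z(\x_i)^2 \le N_z$, bounds $N_z \le C n\veps_n^d$ via the transport maps \eqref{NiceTranspMaps}, and takes $t_n = \sqrt{\beta C \log n\,\veps_n^d/n}$; then the exponent is $n^2 t_n^2/N_z \asymp n t_n^2/\veps_n^d \asymp \log n$ (not $n t_n^2$ or $n\veps_n^d t_n^2$), so each center fails with probability a negative power of $n$, the union is over only $|S_n| \le C\veps_n^{-d}$ centers (polynomial in $n$), and the resulting bound $|S_n| t_n \asymp \sqrt{\log n/(n\veps_n^d)} \to 0$ precisely under the hypothesis $n\veps_n^d \gg \log n$; your ``first subtlety'' therefore dissolves on computation. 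Your alternative — writing $\sum_z |A_z| = \frac{1}{n}\sum_i(\mu(\x_i)-\y_i)\,g_\epsilon(\x_i)$ with $g_\epsilon = \sum_z \epsilon_z\psi_z$, noting $|g_\epsilon|\le 1$ (which needs only $\psi_z \ge 0$ and $\sum_z \psi_z = 1$, no overlap count), and union bounding over the $2^{|S_n|}$ sign patterns — is also sound: it controls the aggregate directly, makes your preliminary uniform bound $\max_z N_z \lesssim n\veps_n^d$ unnecessary (whereas that localization is the crux of the paper's argument), and with correct bookkeeping needs only $n\veps_n^d \to \infty$, giving a slightly sharper rate at the price of an exponential union bound. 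One bookkeeping correction to your route: for Borel--Cantelli the failure probability $2^{|S_n|+1}e^{-c n t_n^2}$ must be summable in $n$, so you need $n t_n^2 \gtrsim \veps_n^{-d} + \beta\log n$; your stated choice (``$t_n^2 n \veps_n^d \to \infty$ faster than $\log(1/\veps_n)$'') does not supply the $\log n$ term when $\veps_n \to 0$ very slowly, though the fix — take $t_n^2 \asymp (\veps_n^{-d} + \log n)/n$, which still tends to zero under \eqref{vepsn} — is immediate.
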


\begin{proof}

Fix $\beta>2$. Let $z \in S_n$ and let $N_z:= \# \{ i \in \{ 1, \dots, n \} \: : \: \x_i \in B(z, \veps_n) \}$. In the event where the transportation map $T_n$ from \eqref{NiceTranspMaps} exists (this event occurs with probability at least $1-1/ n^{\beta}$), we have that
\[ \lVert T_n - Id \rVert_{L^\infty(\nu)} \leq C_\beta \frac{\log(n)^{p_d}}{n^{1/2}} \ll \veps_n,\]
and from this, it follows that
\[  \bigcup_{\x_i \in B(z, \veps_n)} T^{-1}_n( \left\{ \x_i  \right\}) \subseteq B(z,2 \veps_n).  \]
We conclude that with probability at least  $1- 1/n^{\beta}$,
\begin{equation}
\frac{N_z}{n} \leq \nu(B(z, 2\veps_n)) \leq  M C_d \veps_n^d,
\label{Nzsize} 
\end{equation}
where $M$ is as in \eqref{BoundsRho} and $C_d$ is a constant only depending on dimension.

On the other hand, conditioned on $\x_i=x_i $ for $i=1,\dots,n$, the variables $ \left\{  \y_i  \cdot \psi_z(\x_i) \right\}_{i=1, \dots, n}$
are conditionally independent and have conditional distribution:
 \begin{equation*}
 \y_i\psi_z(\x_i) =
 \begin{cases}
  \psi_z(\x_i) & \text{with prob. }  \mu(\x_i)
  \\ 0 & \text{with prob. } 1- \mu(\x_i). 
 \end{cases}
 \end{equation*}
 Hence by Hoeffding's inequality, for every $t>0$, we have
\begin{align}
\begin{split}
\Prob \left( \left| \frac{1}{n} \sum_{i=1}^{n} ( \mu(\x_i)- \y_i ) \cdot \psi_z(\x_i) \right| > t \quad  |\quad \x_i = x_i,\quad  \forall i \in \left\{ 1,\dots,n \right\}\right) & \leq 2 \exp \left( -\frac{2 n^2 t^2}{\sum_{i=1}^{n} (\psi_z(x_i))^2 }  \right) 
\\& \leq  2 \exp \left( -\frac{2 n^2 t^2}{N_z }\right),
\end{split}
\label{Hoeffdingsineq}
\end{align}
where the second inequality follows from the fact that $\psi_z$ is always less than $1$.  

From \eqref{Nzsize} and \eqref{Hoeffdingsineq} (taking $t= \sqrt{\frac{\beta MC_d \log(n) \veps_n^d }{ n}}$ ), we deduce that with probability at least $1-2/n^{\beta} $ we have  

\begin{equation}  
\left| \frac{1}{n} \sum_{i=1}^{n} ( \mu(\x_i)- \y_i ) \cdot \psi_z(\x_i) \right|  \leq \sqrt{\frac{MC_d \beta \log(n) \veps_n^d }{ n}} . 
\label{auxHoeffding}
\end{equation}

In the previous estimate we used $z\in S_n$ fixed. Now, using a union bound (where the index set is $S_n$) we deduce from \eqref{CardSn} that with probability at least $1 - \frac{2C}{n^{\beta} \veps_n^d}$, \eqref{auxHoeffding} holds for every $z \in S_n$.

Therefore, with probability at least $1 - \frac{2C}{n^{\beta} \veps_n^d}$  , 
\[  \sum_{z \in S_n} \left| \frac{1}{n} \sum_{i=1}^{n} ( \mu(\x_i)- \y_i ) \cdot \psi_z(x_i) \right|  \leq  \frac{C}{\veps_n^d}\cdot \sqrt{\frac{\beta \log(n) \veps_n^d }{ n}}= C\sqrt{\frac{\beta \log(n)}{ n \veps_n^d}}  \] 

Since $\frac{1}{n^{\beta} \veps_n^d}$ is summable (notice that $\frac{1}{n^{\beta} \veps_n^d} \ll \frac{1}{n^{\beta-1}}$ ) we can use Borel-Cantelli lemma to conclude that with probability one
\[ \lim_{n \rightarrow \infty} \sum_{z \in S_n} \left| \frac{1}{n} \sum_{i=1}^{n} ( \mu(\x_i)- \y_i ) \cdot \psi_z(\x_i) \right| =0,   \]
which is what we wanted to show.





%

\end{proof}

With all the previous lemmas at hand, we are now ready to complete the proof of Theorem \ref{mainTheorem}.

\begin{proof}[Proof of Theorem \ref{mainTheorem}, part (2)] 

Following the arguments at the start of the section we may safely assume that $u_n^* \rightharpoonup u$ for some $u \in L^1(\nu)$. Lemmas \ref{LemmaGoodRegime1} and \ref{LemmaGoodRegime2} then imply that if \eqref{ConditionConvergence} holds, then $u_n^* \converges{TL^1} u_B$, which is the desired result. Hence the remainder of the proof aims to show \eqref{ConditionConvergence}.

First of all observe that 
\begin{equation}
 \sup_{n \in \N} \lambda_n GTV_{n, \veps_n}(u_n^*) \leq 1,
\label{Regularity}
\end{equation}

which follows from the fact that for every $n \in \N$,
\[ \lambda_n GTV_{n, \veps_n}(u_n^*) \leq R_{n, \lambda_n}(u_n^*) \leq R_{n, \lambda_n}(1) = R_n(1) \leq 1.  \]

Consider $u_n^T := u_n^* \circ T_n$, where $T_n$ is the transportation map from \eqref{NiceTranspMaps}. Likewise, define $l^T_n(x) := l_n\circ T_n (x)$. Observe that for almost every $x,w \in D$, we have
\[  \frac{|T_n(x) - T_n(w)|}{\veps_n} \leq  \frac{|x-w|}{\veps_n}  + \frac{2 \lVert Id - T_n \rVert_{L^\infty(\nu)}}{ \veps_n}. \]
Now, given \eqref{NiceTranspMaps} and \eqref{vepsn} we conclude that for all large enough $n$ and for almost every $x,w \in D$ we have
\[ \hat{\eta}\left( \frac{x-w}{\veps_n} \right) \leq  \eta\left( \frac{T_n(x) - T_n(w)}{\veps_n} \right) ,\]
where $\hat{\eta}$ is defined as
\begin{equation}
 \hat{\eta}(r) := \eta(r +1)  \text{ for } r \geq0.
\label{etah2}
\end{equation}

In particular, from \eqref{Regularity} we deduce that
\begin{equation}
    \sup_{n \in \N} \frac{\lambda_n}{\veps_n} \int_{D \times D} \hat{\eta}_{\veps_n}(x-w) |u_n^T(x)  - u_n^T(w)|  d \nu(x) d \nu(w)  < \infty ,
    \label{UniformBoundFractionalSobolev}
\end{equation}
where we have used the change of variables \eqref{chvar} to write integrals with respect to $\nu_n$ as integrals with respect to $\nu$.

Using again the change of variables \eqref{chvar}, we can restate our original goal to be
\begin{equation}
\lim_{n \rightarrow \infty} \int_D u_n^T l^T_n d \nu  = \int_D u \mu d \nu.   
  \label{Goal}
\end{equation}
We show \eqref{Goal} in several steps. 

First, for $z \in S_n$, we consider the average: 
\[  \overline{u^T_n}(z) := \frac{1}{\nu\left( B(z,\veps_n{}) \right)} \int_{B(z,\veps_n{})} u_n^T(w ) d\nu(w).\]

Then, we notice that

\begin{align*}
\begin{split}
& \left \lvert \int_D u_n^T(x) l^T_n(x)  d \nu(x) -   \sum_{z \in S_n}  \overline{u^T_n}(z) \int_{B(z,\veps_n)} l^T_n(x) \psi_z(x) d \nu(x)   \right \rvert 
\\& = \left  \lvert  \sum_{z \in S_n}  \int_{D} u_n^T(x) l^T_n(x) \psi_z(x) d \nu(x) -   \sum_{z \in S_n}   \int_{B(z,\veps_n)} \overline{u^T_n}(z) l^T_n(x) \psi_z(x) d \nu(x)   \right \rvert  
\\&= \left  \lvert  \sum_{z \in S_n}  \int_{B(z,\veps_n)} u_n^T(x) l^T_n(x) \psi_z(x) d \nu(x) -   \sum_{z \in S_n}   \int_{B(z,\veps_n)} \overline{u^T_n}(z) l^T_n(x) \psi_z(x) d \nu(x)  \right  \rvert
 \\  &\leq  \sum_{z \in S_n} \frac{1}{\nu\left( B(z,\veps_n) \right)} \int_{B(z,\veps_n)} \int_{B(z, \veps_n)} \lvert u_n^T(x) - u_n^T(w) \rvert  l^T_n(x) \psi_z(x) d \nu(w) d \nu(x) 
\\ & \leq \frac{C}{\veps_n^d} \int_{D} \int_{B(x,\veps_n)}  \lvert u_n^T(x) - u_n^T(w) \rvert d \nu(w) d \nu(x)
\\& \leq C \int_{D} \int_{B(x,\veps_n)} \hat{\eta}_{\veps_n}(x-w)  \lvert u_n^T(x) - u_n^T(w) \rvert d \nu(w) d \nu(x)
\end{split}
\end{align*}
where in the first equality we have used the fact that the functions $ \{ \psi_z \}_{z \in S_n}$ form a partition of unity; in the second equality we have used the fact that $\psi_z$ is supported in $B(z,\veps_n)$; we have also used the fact that $|l^T_n|$ and $\psi_z$ are bounded above by one and the fact that $ \nu( B(z,\veps_n) )  \geq c \veps_n^d$ (see \eqref{sizeballs}); the last inequality follows from the assumption \eqref{ExtraAssump} and the definition of $\hat{\eta}$ in \eqref{etah2}.

 From \eqref{UniformBoundFractionalSobolev} and the fact that $\frac{\veps_n}{\lambda_n} \rightarrow 0$ (by assumption), we deduce that 
\begin{equation}
\lim_{n \rightarrow \infty} {} \left \lvert  \int_D u_n^T(x) l^T_n(x)  d \nu(x) -   \sum_{z \in S_n}  \overline{u^T_n}(z) \int_{B(z,\veps_n{})} l^T_n(x) \psi_z(x) d \nu(x)   {}   \right\rvert =0 .
\label{auxweak1}
\end{equation}
In a similar fashion we obtain  
\begin{align}
\begin{split}
{} \left \lvert \int_D u_n^T(x) \mu(x)  d \nu(x) -   \sum_{z \in S_n}  \overline{u^T_n}(z) \int_{B(z,\veps_n{})} \mu(x)\psi_z(x) d \nu(x)   {} \right \rvert  
 \\ \leq C \int_{D} \int_{B(x,\veps_n)} \hat{\eta}_{\veps_n}(x-y)  \lvert u_n^T(x) - u_n^T(w) \rvert d \nu(w) d \nu(x),
 \end{split}
\end{align}
and thus
\begin{equation}
\lim_{n \rightarrow \infty} {} \left \lvert \int  u_n^T(x) \mu(x)  d \nu(x) -   \sum_{z \in S_n}  \overline{u^T_n}(z) \int_{B(z,\veps_n{})} \mu(x)\psi_z(x) d \nu(x)  { \rvert} \right \rvert =0 .
\label{auxweak2}
\end{equation}

On the other hand, notice that 
\begin{equation}
\lim_{n \rightarrow \infty } {} \left \lvert \int_D u_n^T(x) \mu(x) d \nu(x)  -   \int_D  u(x) \mu(x) d \nu(x) {} \right \rvert = 0,
\label{auxweak3}
\end{equation}
which follows directly from the fact that $u_n^T$ converges weakly towards $u $. 

From \eqref{auxweak1}, \eqref{auxweak2}, \eqref{auxweak3} and the triangle inequality, it follows that in order to  show \eqref{Goal} it is enough to show that
\[  \lim_{n \rightarrow \infty} \left| \sum_{z \in S_n}  \overline{u^T_n}(z) \int_{B(z,\veps_n{})} \mu(x) \psi_z(x)d \nu(x)  -    \sum_{z \in S_n}  \overline{u^T_n}(z) \int_{B(z,\veps_n{})} l^T_n(x) \psi_z(x) d \nu(x)  \right| =0.\]
However, notice that 
\begin{align*}
\begin{split}
 &\left| \sum_{z \in S_n}  \overline{u^T_n}(z) \int_{B(z,\veps_n{})} \mu(x) \psi_z(x)d \nu(x) - \sum_{z} \overline{u^T_n}(z) \int_{B(z,\veps_n{})} \mu(T_n(x)) \psi_z(x)d \nu(x)  \right| 
 \\ & \leq \sum_{z \in S_n}  \int_{B(z,\veps_n{})} \lvert \mu(x)- \mu(T_n(x)) \rvert  \psi_z(x) d \nu(x) 
 \\&= \sum_{z \in S_n}  \int_{D} \lvert \mu(x)- \mu(T_n(x)) \rvert  \psi_z(x) d \nu(x)
 \\&= \int_{D} \lvert \mu(x)- \mu(T_n(x))  \rvert d \nu(x),
\end{split}
\end{align*}
and this last term goes to zero as $n \rightarrow \infty$; this follows from the fact that $\mu$ is continuous at $\nu$-a.e. $x \in D$ and so $ \lim_{n \rightarrow \infty}\mu(T_n(x)) = \mu(x) $ for $\nu$-a.e. $x\in D$, and by the dominated convergence theorem. Thus, to show \eqref{Goal}, it is enough to show that
\[ \lim_{n \rightarrow \infty} \mathcal{I}_n =0 ,  \]
where $\mathcal{I}_n$ is given by
\[  \mathcal{I}_n:= \left| \sum_{z \in S_n}  \overline{u^T_n}(z) \int_{B(z,\veps_n{})} \mu(T_n(x)) \psi_z(x)d \nu(x)  -    \sum_{z \in S_n}  \overline{u^T_n}(z) \int_{B(z,\veps_n)} l^T_n(x) \psi_z(x) d \nu(x)  \right|.\]

Now, for fixed $z \in S_n $, 

\begin{align}
\begin{split}
\int_{B(z , \veps_n{})} \left( \mu( T_n(x) ) - l_n( T_n(x) )   \right) \psi_z(x) d \nu(x) &= \int_{D} \left( \mu( T_n(x) ) - l_n( T_n(x) )   \right) \psi_z(x) d \nu(x)   
\\ &=   \int_{D} \left( \mu( T_n(x) ) - l_n( T_n(x) )   \right)( \psi_z(x) - \psi_z(T_n(x)) ) d \nu(x)  
\\& +  \int_{D} \left( \mu( T_n(x) ) - l_n( T_n(x) )   \right) \psi_z(T_n(x)) d \nu(x).
\end{split}
\end{align}

Observe that
\begin{align*}
\begin{split}
\left| \int_{D} \left( \mu( T_n(x) ) - l_n( T_n(x) )   \right)( \psi_z(x) - \psi_z(T_n(x)) ) d \nu(x) \right| 
\\ = \left|  \int_{B(z,  {2 }\veps_n)} \left( \mu( T_n(x) ) - l_n( T_n(x) )   \right)( \psi_z(x) - \psi_z(T_n(x)) ) d \nu(x) \right|
\\  \leq  \nu(B(z,  {2} \veps_n))\cdot \sup_{x \in B(z, {2 }\veps_n)}|\psi_z(x) - \psi_z(T_n(x))| 
\\ \leq C \nu(B(z, {2 }\veps_n)) \frac{\lVert Id - T_n \rVert_{L^\infty(\nu)}}{\veps_n},
\end{split}
\end{align*}
where the first equality comes from the fact that $\|Id - T_n \|_{L^\infty(\nu)} < \veps_n$ and the last inequality follows from \eqref{UniformBoundGradients}. The previous computations imply that 

\begin{align}
\begin{split}
 \mathcal{I}_n  & \leq   \frac{C\lVert Id - T_n \rVert_{L^\infty(\nu)}}{\veps_n} \cdot \sum_{z \in S_n } \overline{u^T_n}(z)\nu(B(z,  {2 }\veps_n)) + \sum_{z \in S_n } \overline{u^T_n}(z) {} \left \lvert \int_{D} \left( \mu( T_n(x) ) - l_n( T_n(x) )   \right) \psi_z(T_n(x)) d \nu(x) {} \right \rvert
 \\ &   \leq   \frac{C\lVert Id - T_n \rVert_{L^\infty(\nu)}}{\veps_n} \cdot \sum_{z \in S_n } \nu(B(z, {2} \veps_n))   +  \sum_{z \in S_n} \left| \frac{1}{n}\sum_{i=1}^{n}\left( \mu(\x_i) - \y_i \right) \cdot \psi_z(\x_i)   \right|
 \\& \leq   \frac{C\lVert Id - T_n \rVert_{L^\infty(\nu)}}{\veps_n} + \sum_{z \in S_n} \left| \frac{1}{n}\sum_{i=1}^{n}\left( \mu(\x_i) - \y_i \right) \cdot \psi_z(\x_i)   \right|,
 \end{split}
 \label{AuxWeak0}
 \end{align}
where in the above we have used the change of variables formula \eqref{chvar} to write
\[   \int_{D} \left( \mu( T_n(x) ) - l_n( T_n(x) )   \right) \psi_z(T_n(x)) d \nu(x) = \frac{1}{n} \sum_{i=1}^{n}\left( \mu(\x_i) - \y_i \right) \psi_z(\x_i);\]
we have also used the fact that $\overline{u^T_n}(z)$ is less than one for every $z \in S_n$.

The first term in the last line of \eqref{AuxWeak0} converges to zero as $n \rightarrow \infty$ (this follows from \eqref{NiceTranspMaps} and \eqref{vepsn}); on the other hand, Lemma \ref{LemmaAuxWeak} shows that the second term also converges to zero. Hence $\lim_{n \rightarrow \infty} \mathcal{I}_n =0$  and this finishes the proof.
\end{proof}
\section{Proof of Theorem \ref{mainTheorem2}}
\label{SecMainTheorem2}

We now move to the proof of Theorem \ref{mainTheorem2}. We impose the additional constraint:
\[  (\log(n))^{d \cdot p_d} \veps_n \ll \lambda_n \ll1.\]


Let us again denote by $u_n^T$ the function $u_n^T:= u_n^* \circ T_n $, where $\left\{ T_n \right\}_{n \in \N}$ is the sequence of transportation maps from \eqref{NiceTranspMaps}. 
Up to this point, we have established that when $\veps_n$ satisfies \eqref{vepsn} and $\lambda_n$ satisfies $\veps_n \ll \lambda_n \ll 1$, 
then with probability one, the functions $u_n^*$ converge in the $TL^1$ sense towards the Bayes classifier $u_B$; 
by the very definition of $TL^1$ convergence, this is equivalent to saying that $u_n^T$ converges in the $L^1(\nu)$ sense towards $u_B$. 
Now we would like to say that the same convergence result holds for the sequence of functions $\left\{ u_n^V \right\}_{n \in \N}$,
where $u_n^V$ is the Voronoi extension (as defined in \eqref{VoronoiExtension}) of the function $u_n^*$.

Let us consider $\tilde{\veps}_n:= \veps_n - 2 \lVert T_n - Id \rVert_{L^\infty(\nu)}$. From the assumptions on $\veps_n$ and from \eqref{NiceTranspMaps}, it is clear that for large enough $n$, $\tilde{\veps}_n >0$, so without the loss of generality we assume this holds for all $n$.

Now, 
\begin{align*}
\begin{split}
 \int_{D} \lvert  u_n^T(x) - u_n^V(x) \rvert d \nu(x)   &=   \int_{D}  \left( \frac{1}{\nu( B(x,\tilde{\veps}_n))} \int_{B(x,\tilde \veps_n)} \lvert u_n^T(x) - u_n^V(x) \rvert d \nu(w)  \right)d\nu(x)    
\\&= \int_{D}\left( \frac{1}{\nu( B(x,\tilde \veps_n))}\int_{B(x,\tilde \veps_n)} \lvert u_n^T(x)- u_n^T(w) + u_n^T(w) - u_n^V(x) \rvert d \nu(w)\right) d\nu(x) 
\\&\leq  \int_{D} \left(  \frac{1}{\nu( B(x,\tilde \veps_n) )} \int_{B(x,\tilde \veps_n)} \lvert u_n^T(x)- u_n^T(w) \rvert d \nu(w) \right) d\nu(x)
\\ & +    \int_{D} \left( \frac{1}{\nu( B(x,\tilde \veps_n) )} \int_{B(x,\tilde \veps_n)} \lvert u_n^T(w)- u_n^V(x) \rvert d \nu(w) \right) d\nu(x)
\\& \leq  \frac{C}{\tilde \veps_n^d}\int_{D}\int_{B(x,\tilde \veps_n)}\lvert u_n^T(x) - u_n^T(w)\rvert d \nu(w) d \nu(x)  
\\ &+ \frac{C}{\tilde \veps_n^d}\int_{D}\int_{B(x,\tilde \veps_n)}\lvert u_n^T(w) - u_n^V(x)\rvert d \nu(w) d \nu(x)  
\\&=: C(\mathcal{I}^1_n  + \mathcal{I}^2_n ),
\end{split}
\end{align*}
where the last inequality follows from \eqref{sizeballs}. We will show now that $\int_{D} \lvert  u_n^T(x) - u_n^V(x) \rvert d \nu(x)$ converges to zero as $n \rightarrow \infty$ by showing that each of the terms $\mathcal{I}_n^1, \mathcal{I}_n^2$ converges to zero as $n \rightarrow \infty$. 
Since $u_n^T \converges{L^1(\nu)} u_B$ as $n \rightarrow \infty$, this will establish that $u_n^V \converges{L^1(\nu)} u_B$ as $n \rightarrow \infty$.

Let us first show that $\mathcal{I}^1_n \rightarrow 0$ as $n \rightarrow \infty$. Notice that for almost every $x,w \in D$ it is true that if $|T_n(x) - T_n(w)| > \veps_n $, then $|x-w|> \tilde{\veps}_n$. 
In particular, we see that for almost every $x,w \in D$
\[ \frac{1}{\tilde{\veps}_n^d}\mathds{1}_{\lvert x-w \rvert \leq \tilde{\veps}_n }  \leq \frac{1}{\tilde{\veps}_n^d} \mathds{1}_{\lvert T_n(x)-T_n(w)\rvert \leq \tilde{\veps}_n } \leq \left(\frac{\veps_n}{\tilde{\veps}_n}\right)^d \eta_{\veps_n}(T_n(x) - T_n(w)),\]
where the last inequality follows using \eqref{ExtraAssump}. Then, it follows that
\[ \mathcal{I}_n^1 \leq \left(\frac{\veps_n}{\tilde{\veps}_n}\right)^d \int_D \int_D \eta_{\veps_n}(T_n(x) - T_n(w))\lvert u_n^T(x) - u_n^T(w) \rvert d \nu(w) d \nu(x). \]
From the previous inequality and the change of variables formula \eqref{chvar}, we deduce that
\[ \mathcal{I}_n^1 \leq \frac{\veps_n}{\lambda_n} \left(\frac{\veps_n}{\tilde{\veps}_n}\right)^d  \lambda_n GTV_{n, \veps_n}(u_n^*). \]
From \eqref{Regularity}, the fact that $\frac{\veps_n}{\lambda_n}\rightarrow 0$ and $\frac{\veps_n}{\tilde{\veps}_n} \rightarrow 1$, it follows that $\mathcal{I}_n^1 \rightarrow 0$ as $n \rightarrow \infty$.

Now let us estimate the term $ \mathcal{I}^2_n$. Let us denote by $U_1^n , \dots, U_n^n$ the partition of $D$ induced by $T_n$, that is, 
\[ U_i^n := T_n^{-1}(\x_i). \]
Also, let us denote by $V_1^{n}, \dots, V_n^n$ the Voronoi partition of $D$ associated to the points $\x_1, \dots, \x_n$, that is,
\[ V_i^n := \left\{x \in D \: : \: |x- \x_i| = \min_{j=1,\dots, n} |x- \x_j|  \right\}.\]

Observe that if $x \in U_i^n$ and $w \in V_j^n$, then 
\begin{align*}
\begin{split}
 |\x_i - \x_j| & \leq |\x_i - x| + |x-w| + |w - \x_j|  
 \\& = |T_n(x) - x| + |x-w| + |w-\x_j| 
 \\& \leq |T_n(x) - x| + |x-w| + |w - T_n(w)| 
 \\& \leq |x-w| + 2 \lVert T_n - Id \rVert_{L^\infty(\nu)};
\end{split}
\end{align*}

where the second inequality follows from the fact that the closest point to $w$ among the points $\x_1, \dots, \x_n$ is $\x_j$. In particular, we see that for $x\in U_i^n$ and $w \in V_j^n$, 
\[ \frac{1}{\tilde{\veps}_n ^d} \mathds{1}_{\lvert x- w \rvert \leq \tilde{\veps}_n } \leq  \frac{1}{\tilde{\veps}_n ^d} \mathds{1}_{\lvert \x_i- \x_j \rvert \leq \veps_n } \leq \left(\frac{\veps_n}{\tilde{\veps}_n}\right)^{d}\eta_{\veps_n}(\x_i - \x_j). \]

From the previous observation, we see that

\begin{align}
\begin{split}
\mathcal{I}_n^2 & = \frac{1}{\tilde{\veps}_n^d} \sum_{i,j} \int_{U_i^n} \int_{V_j^n} \mathds{1}_{|x-w| \leq \tilde{\veps}_n} \cdot |u_n^*(\x_i) - u_n^*(\x_j) | d \nu(w) d\nu(x)      
\\ & \leq  \left(\frac{\veps_n}{\tilde{\veps}_n}\right)^{d} \sum_{i,j} \int_{U_i^n} \int_{V_j^n} \eta_{\veps_n}(\x_i- \x_j) |u_n^*(\x_i) - u_n^*(\x_j) | d \nu(w) d \nu(x)
\\& =   \left(\frac{\veps_n}{\tilde{\veps}_n}\right)^{d} \sum_{i,j} \eta_{\veps_n}(\x_i- \x_j) |u_n^*(\x_i) - u_n^*(\x_j) | \nu(V_j^n)\nu(U_i^n)
\\&= \left(\frac{\veps_n}{\tilde{\veps}_n}\right)^{d} \frac{1}{n}\sum_{i,j} \eta_{\veps_n}(\x_i- \x_j) |u_n^*(\x_i) - u_n^*(\x_j) | \nu(V_j^n)
\\& \leq \left(\frac{\veps_n}{\tilde{\veps}_n}\right)^{d}  \cdot \left(\max_{j=1,\dots,n } n \cdot \nu(V_j^n)\right)  \cdot \frac{1}{n^2}\sum_{i,j} \eta_{\veps_n}(\x_i- \x_j) |u_n^*(\x_i) - u_n^*(\x_j) | 
\\& = \left(\frac{\veps_n}{\tilde{\veps}_n}\right)^{d}  \cdot \left(\max_{j=1,\dots,n } n \cdot\nu(V_j^n)\right) \frac{\veps_n}{\lambda_n}  \lambda_n GTV_{n, \veps_n}(u_n^*),
\end{split}
\end{align}

where the third equality follows from the fact that $\nu(U_i^n)= \frac{1}{n}$ for every $i=1,\dots,n$. Now, for an arbitrary $j=1,\dots, n$, notice that if $w \in V_j^n$, then 
$|w- \x_j| \leq \lvert w- T_n(w)  \rvert \leq C\frac{ (\log(n))^{p_d}}{n^{1/d}} $ which follows from \eqref{NiceTranspMaps}. 
Thus, $V_j^n$ is contained in a ball with radius $C\frac{ (\log(n))^{p_d}}{n^{1/d}}$ and so  $\nu(V_j^n) \leq C \frac{(\log(n))^{d \cdot p_d}}{n}$ for some constant $C$ that depends on dimension and the constant $M$ from \eqref{BoundsRho}. 
Therefore,
\[ \mathcal{I}^2_n \leq  \left(\frac{\veps_n}{\tilde{\veps}_n}\right)^{d} \cdot \left( \frac{C \veps_n (\log(n))^{d \cdot p_d} }{\lambda_n} \right) \cdot\lambda_n GTV_{n , \veps_n}(u_n^*) \rightarrow 0, \quad \text{ as } n \rightarrow \infty, \]
given the assumptions on $\veps_n, \lambda_n$. This concludes the proof.

\bibliography{BiblioOver}
\bibliographystyle{siam}

\Addresses

%





%




%




\end{document}